\def\BibTeX{{\rm B\kern-.05em{\sc i\kern-.025em b}\kern-.08em
    T\kern-.1667em\lower.7ex\hbox{E}\kern-.125emX}}
\definecolor{myred}{HTML}{c20014}
\definecolor{mygreen}{HTML}{008000}
\setlist{noitemsep}
\crefname{equation}{}{}
\Crefname{equation}{Equation}{Equations}
\newtheorem{theo}{Theorem}[section]
\newtheorem{prop}[theo]{Proposition}
\newtheorem{coro}[theo]{Corollary}
\newtheorem{lemma}[theo]{Lemma}
\theoremstyle{definition}
\newtheorem{defi}[theo]{Definition}
\theoremstyle{remark}
\newtheorem{rem}[theo]{Remark}
\newtheorem{ex}[theo]{Example}
\newtheorem{as}[theo]{Assumption}
\crefname{theo}{Theorem}{Theorems}
\crefname{claim}{Claim}{Claims}
\crefname{defi}{Definition}{Definitions}
\crefname{as}{Assumption}{Assumptions}
\crefname{enumi}{}{}
\Crefname{enumi}{Item}{Items}
\crefname{ex}{Example}{Examples}
\newcommand{\dt}[1]{\frac{\mathrm{d}#1}{\mathrm{d}t}}
\newcommand{\dl}[2]{\frac{\partial#1}{\partial#2}}
\newcommand{\A}{\mathcal{A}}
\newcommand{\B}{\mathcal{B}}
\renewcommand{\d}{\mathrm{d}}
\newcommand{\K}{\mathcal{K}}
\renewcommand{\S}{\mathcal{S}}
\newcommand{\R}{\mathbb{R}}
\newcommand{\N}{\mathbb{N}}
\newcommand{\C}{\mathcal{C}}
\DeclareMathOperator{\dist}{dist}
\DeclareMathOperator{\id}{id}
\newcommand{\dom}{\operatorname{dom}}
\newcommand{\ran}{\operatorname{ran}}
\newcommand{\sg}[1]{\{#1_t\}_{t \geq 0}}
\newcommand{\para}[1]{\medskip \noindent {\bfseries #1.}}
\newcommand{\bbm}[1]{\left[\begin{matrix} #1 \end{matrix}\right]}
\newcommand{\vnabla}{\vec{\nabla}}
\renewcommand{\leq}{\leqslant}
\renewcommand{\geq}{\geqslant}
\renewcommand{\epsilon}{\varepsilon}
\newcommand{\eps}{\varepsilon}
\begin{document}

\title{Projected Integral Control of Impedance Passive Nonlinear Systems}
\author{Nicolas Vanspranghe$^\dagger$, Pietro Lorenzetti$^\ddagger$, Lassi Paunonen$^\dagger$, George Weiss$^\mathsection$
\thanks{
This work was supported by the Research Council of Finland grant 349002. 
}
\thanks{$^\dagger$Mathematics Research Centre, Tampere 
University, PO Box 692, 33101 Tampere, Finland (e-mails:
 lassi.paunonen@tuni.fi, nicolas.vanspranghe@tuni.fi). $^\ddagger$ Universit\'e de Lorraine, CNRS, CRAN, F-54000 Nancy, France (e-mail: pietro.lorenzetti@univ-lorraine.fr). $^\mathsection$School of EE, 
Tel Aviv University, Ramat Aviv, Israel (e-mail: gweiss@tauex.tau.ac.il).
}
}

\maketitle

\begin{abstract}
We propose an abstract framework for solving the constrained set-point tracking problem for impedance passive infinite-dimensional nonlinear systems. The class of systems considered is
governed by  monotone differential inclusions and
allows us to exploit the theory of contraction semigroups.
To account for possible operational constraints, e.g., bounds on the input,
we replace a classical integral controller with a projected integral controller.
This guarantees that the integrator state  remains in a  given closed convex set, where said constraints are satisfied. We showcase our results through three case studies.
\end{abstract}

\begin{IEEEkeywords}
Set-point tracking, 
differential inclusions, infinite-dimensional systems, nonlinear systems, 
passivity, 
projected dynamical systems, 
input constraints, anti-windup.
\end{IEEEkeywords}

\section{Introduction}
\label{sec:intro}

\emph{Set-point tracking} is a relevant problem in many engineering applications, where the goal is to control a given system in such a way that its output asymptotically tracks a constant reference 
and closed-loop stability is guaranteed. This problem is traditionally solved by employing an integral controller coupled with a closed-loop stabilizer; see, e.g., \cite{Davison76,FraWon76} for linear systems and \cite{Isi97,IsiByr90} for nonlinear systems. 
Depending on the system's properties, different control design can be used to ensure  closed-loop stability. For systems  that are already stable and exhibit a monotone input-output behavior at steady state, 
\emph{low-gain integral control} \cite{Mor85,DesLin85,FliLog03,Sim20} is a simple yet effective way to achieve (at least) local stability. Alternatively,
state feedback stabilizers based on \emph{forwarding} techniques  may be used to guarantee local or global convergence
\cite{MazPra96,AstPra16,GiaAst21,BalMar23,VanBri23,VanBri23conf}.

A notable feature of many physical systems is
\emph{passivity}\cite{Wil72,VdS00,OrtVdS01}. 
A system is {passive} if the rate of increase of energy stored in the system cannot exceed the power supplied to it. Up to suitable choices of input and output, many physical systems 
are passive; see, e.g., \cite{OrtLor98,BroLoz20}. Passive systems enjoy 
useful properties. For instance, their stability may be conveniently formulated in terms of \emph{detectability} \cite{SepJan12}, and power-preserving interconnections of passive systems remain passive \cite{Wil72}. 

\emph{Passivity-based control} builds upon such properties. In the context of set-point tracking, the key observation is that an integral controller is a passive system itself, meaning that a power-preserving interconnection with the passive plant to be controlled leads to a passive closed-loop system. This fact has been used to good advantage in the control literature; see, e.g.,
\cite{JayOrt07,JayWei09,OrtvdS18,OrtRom21}, where proportional-integral control laws for passive nonlinear (finite-dimensional) systems are studied. In contrast with low-gain integral control techniques, passivity-based integral control does not impose an upper bound on the controller gain. 
Moreover, ensuring closed-loop \emph{global} asymptotic stability is often easier when leveraging passivity.
 
The notion of passivity extends naturally to infinite-dimensional systems. In many systems governed by partial differential equations,
the energy stored in the system is given by a quadratic function of the state.
This particular choice leads to what are known as \textit{impedance passive} {systems}; see, e.g., \cite{SinWei22,SinWei25,Sta02,TucWei14,ZhaWei17}. For infinite-dimensional linear impedance passive systems, passivity-based control has been used in \cite{RebWei03,CurWei06,Pau19} in order to achieve set-point tracking and, more generally, output regulation.
Although the theory is well-understood for linear systems, there is a lack of design and analysis methods for passive infinite-dimensional nonlinear systems.

We address this gap by proposing a general framework for solving the set-point tracking problem for impedance passive infinite-dimensional nonlinear systems. Our objective is to narrow down the least amount of assumptions needed to ensure closed-loop stability and set-point tracking. 
We take advantage of the theory of monotone differential inclusions \cite{Bre73book,Bar76book} and we show how to encompass many relevant classes of systems.
This allows us to consider features beyond the scope of previous theory for infinite-dimensional systems on set-point tracking, such as multivalued dynamics and nonlinear boundary input. 
In the spirit of passivity, we employ new stability arguments based on a notion of \emph{internal energy dissipation} and establish both open-loop and closed-loop sufficient conditions for global asymptotic stability.

In order to guarantee that possible operating constraints are satisfied (e.g., actuator saturation or safety constraints), we leverage the theory of projected dynamical systems \cite{BroDan06,BroTan20,DelCor24,Nag12,SinFue24} and replace the standard output integrator with a \emph{projected integral controller} \cite{LorWei23,LorWei22,LorPau2023}. This forces the integrator state to remain in 
a chosen convex set, guaranteeing that the aforementioned  constraints are satisfied.
For instance, choosing the constraint set in accordance with  actuator saturation limits 
constitutes a built-in \emph{anti-windup} mechanism; see, e.g., \cite{TarTur09survey} for an overview of anti-windup designs in finite dimension. 
Furthermore,
projected integral control allows us to  regulate systems that are well-behaved   only for certain sets of inputs.

We mention that, when particularized to finite-dimensional nonlinear systems, our theory relates to, e.g., \cite{Jay05,JayOrt07,TanBro18}. Compared to \cite{Jay05,JayOrt07}, we allow for multivalued dynamics and convex input constraints, but we consider quadratic storage functions only. Moreover, the class of systems considered here is neither a subset nor a superset of the one from \cite{TanBro18}, where  
a more general {output regulation} problem is studied. 

The paper is organized as follows. In \cref{sec:2} we describe the class of systems that we are interested in controlling.
The set-point tracking problem is presented in \cref{sec:PI},
where the projected integrator is introduced. The closed-loop stability analysis is carried out in \cref{sec:4}. Finally, {three} case studies
are presented in \cref{sec:5}.


\para{Notation} The closed half-line $[0, +\infty)$ is denoted by $\R^+$. In this paper, all vector spaces are real. Let $E$ be a Banach space. The norm of $E$ is denoted by $\|\cdot\|_E$. If $E$ is also a Hilbert space, its scalar product is written $\langle \cdot, \cdot \rangle_E$. We denote by $\mathcal{C}(I,E)$ the space of continuous functions from $I\subset\R$ to E. Given a real $T > 0$, $L^1(0, T; E)$ denotes the space of (equivalence classes of) Bochner-measurable functions $u :(0, T) \to E$ such that $\|u\|_E$ is integrable. If $E$ is a Hilbert space (or, more generally, a reflexive Banach space), we say that a function $u : [0, T] \to E$ is absolutely continuous on $[0, T]$ if there exists $v \in L^1(0, T; E)$ such that $u(t) = \int_0^t v(s) \, \d s$. Then, $u$ is continuous and differentiable almost everywhere (a.e.) in $(0, T)$, with $\dot{u} = v$ a.e.  We will say that a function $u : \R^+ \to E$ is absolutely continuous on $\R^+$ if it is absolutely continuous on every finite interval $[0, T]$, $T > 0$. For more details on vector-valued functions and integration, the reader is referred to \cite{Ama95book}. Now, let $E$ be a metric space. The closure and the interior of a subset $S \subset E$ are denoted by $\overline{S}$ and $S^\circ$ respectively. Given another metric space $F$, a mapping $f : E \to F$ is said to be compact if it maps bounded subsets of $E$ into relatively compact subsets of $F$, i.e., subsets whose closures are compact. 
Finally, let $E$ and $F$ be vector spaces. A subset $A$ of the Cartesian product $E \times F$ can be viewed as a multivalued map $A : E \rightrightarrows F$. For $x \in E$, we let $A(x) \triangleq \{ y \in F : (x, y) \in A\}$. The domain and range of $A$ are the sets $\dom(A) \triangleq \{ x \in E : A(x) \not = \emptyset\}$ and $\ran(A) \triangleq \cup_{x \in \dom(A)} A(x)$. Rules for basic algebraic manipulations on multivalued maps can be found in, e.g., \cite{Bar76book,RocWet09}. In the case that $E$ is a Hilbert space, the definition of a maximal dissipative operator $A : E \rightrightarrows E$ (i.e., maximal monotone $-A$) can be found in Appendix~\ref{ap:sg}, together with other useful results from the literature that we will use.

\section{The class of systems under consideration}\label{sec:2}
Let $C$ be a nonempty closed convex subset of a Hilbert space $X$. We are interested in contractive nonlinear dynamics  governed by a differential inclusion of the form
\begin{equation}
\label{eq:open-loop-DE}
\dot{x} \in A(x, u),
\end{equation}
where the $X$-valued state $x$ evolves in $C$, the input $u$ takes values in another Hilbert space $U$ 
and $A : X \times U \rightrightarrows X$ is a possibly multivalued map.
In this section, we precisely define the class of systems that we wish to control by stating and discussing a number of assumptions on the abstract model \cref{eq:open-loop-DE}. \Cref{table:as} summarizes those assumptions.
In the following, we use the convention that every assumption is (and will remain) in force
immediately after {its introduction}.

\begin{table*}
\small
\centering
\begin{tabularx}{500pt}{|c|X|X|X|} \hline
Assumption & \cref{as:A}  & \cref{as:compa-g,as:max-dis-c} & \cref{as:dist} \\ \hline
Type & Steady-state behavior  & Passivity & Steady-state detectability \\  \hline
Purpose & Unique equilibria for constant inputs within the constraint set&  Incremental impedance passivity and closed-loop well-posedness & Strict monotonicity of the steady-state input-to-state map \\ \hline
\end{tabularx}
\caption{Summary of the standing assumptions from Section~\ref{sec:2}.}
\label{table:as}
\end{table*}

\subsection{Steady-state behavior}
\label{sec:steady}
We are given a nonempty closed convex subset $K$ of the input space $U$, which we further assume has nonempty interior.
As mentioned in \cref{sec:intro}, the set $K$ may arise from practical constraints or be comprised of inputs under which we know that system \cref{eq:open-loop-DE} ``behaves well'' at steady state. In \cref{sec:PI} we will introduce a dynamic feedback law for which the input $u$ remains in $K$. 
For now, we make the following assumption.
\begin{as}[Steady-state behavior]
\label{as:A}
The map $A : X \times U \rightrightarrows X$ satisfies:
\begin{enumerate}[label=(\roman*)]
\item \label{it:A-uss} For each $u \in K$, there exists a unique $x \in C$ such that $0 \in A(x, u)$;\footnote{
Recall that the domain of a multivalued map is the set of elements sent to nonempty sets; therefore, $0 \in A(x, u)$ implies $x \in \dom(A(\cdot, u))$.}
\item \label{it:u-inject} 
For each $x \in C$, there exists \emph{at most} one element $u \in K$ such that  $0 \in A(x, u)$.
\end{enumerate}
\end{as}

\Cref{it:A-uss} in \cref{as:A} means that every constant input  $u^\star \in K$ produces a unique steady-state solution $x^\star \in C$. 
On the other hand, it follows from \cref{it:u-inject} that the map $u^\star \mapsto x^\star$ is injective from $K$ to $C$. Thus, throughout this paper, we will often speak of pairs $(x^\star, u^\star)$ of steady-state solution and input, where $u^\star$ uniquely defines $x^\star$ and vice-versa. The ``star'' notation will be convenient to distinguish steady-state, i.e., constant in time trajectories, from time-varying ones.
\begin{defi}[Steady-state pairs]
An element $(x^\star, u^\star) \in C \times K$ is said to be a \emph{steady-state pair} if  $0 \in A(x^\star, u^\star)$.
\end{defi}


\begin{ex}[Finite-dimensional linear systems]
\label{ex:LTI}
Consider, with a slight abuse in notation, the case
$
A(x, u) = Ax + Bu,
$
where $X = C = \R^n$, $U = K = \R^m$, $A \in \R^{n \times n}$ and $B \in \R^{n \times m}$. Then,  \cref{as:A} holds if and only if $\ran(B) \subset \ran(A)$, $\ker(A) = \{0\}$ and $\ker(B) = \{0\}$. In particular, if the uncontrolled system $\dot{x} = Ax$ is \emph{stable}, then $A$ is invertible and \cref{as:A} reduces to $\ker(B) = \{0\}$. In any case, steady-state pairs satisfy $x^\star = -A^{-1}Bu^\star$.
\end{ex}

\begin{ex}[Saturating input]
Let $A(x,u)=-x+{\rm sat}(u)$, with $X=U=\R$, where ${\rm sat}(s)=s$ for $s\in(-1,1)$, ${\rm sat}(s)=-1$ for $s\leq-1$, and ${\rm sat}(s)=1$ for $s\geq1$. In this case, \cref{it:A-uss} in \cref{as:A} holds for any choice of nonempty closed convex set $K\subset\R$, while 
\cref{it:u-inject} holds only for $K\subset(-1,1)$. In the case  $A(x,u)=-{\rm sat}(x)+u$, with $X=U=\R$ and ${\rm sat}$ as above, 
\cref{it:A-uss} holds only for $K\subset(-1,1)$, while \cref{it:u-inject} is satisfied for any choice of $K\subset\R$.
\end{ex}

\subsection{Incremental impedance passivity}
\label{sec:inc-imp}

Let $Y$ be another Hilbert space. We now supplement the control system \cref{eq:open-loop-DE} with an output 
\begin{equation}
\label{eq:output}
y = g(x, u),
\end{equation}
where $g$ is a \emph{single-valued} map from $\dom(g) \subset X \times U$ to $Y$. 
\begin{as}[$A$-compatibility of $g$]\label{as:compa-g} We have $\dom(A) \subset \dom(g)$, and $U = Y$.
\end{as}
\Cref{as:compa-g} guarantees that $g(x, u)$ makes sense whenever $A(x, u)$ is defined, and that passivity (in the sense of Definition~\ref{def:IIP}) is well-defined. The core hypothesis of this work is given next. (We refer the reader to Appendix~\ref{ap:sg} for the definition of maximal dissipativity and relevant related results.)


\begin{as}[Maximal dissipative coupling]\label{as:max-dis-c} The multivalued map $\A : X \times Y \rightrightarrows X \times Y $ defined by
\begin{equation}\label{eq:cal_A}
\A(x, u) \triangleq (A(x, u), -g(x, u))
\end{equation}
is maximal dissipative with $\overline{\dom(\A)} = C \times Y$.
\end{as}

\begin{rem} Continuous contraction semigroups on closed convex subset of Hilbert spaces are \emph{characterized} by maximal dissipative generators; see \cite[Th\'eor\`eme 4.1]{Bre73book} or \cite[Theorem 1.2, Chapter IV]{Bar76book}. Therefore, \cref{as:max-dis-c}
is \emph{equivalent} to $\A$ generating a continuous contraction semigroup on $C \times Y$.
\end{rem}

\begin{rem}
By definition, $\dom(\A) = \dom(A) = \{ (x, u) \in \dom(A(\cdot, u)) \times  Y \} $.
\end{rem}


To 
study system-theoretic properties of the $x$-dynamics, we define solutions to \cref{eq:open-loop-DE} with external input signals.
\begin{defi}[Open-loop solutions]
\label{def:open-loop-DE}
Let $u : \R^+ \to U$ be absolutely continuous.
We say that $x : \R^+ \to X$ is a \emph{strong} solution to 
\cref{eq:open-loop-DE}
if $x$ is {absolutely continuous}, $x(t) \in \dom(A(\cdot, u(t))$ for all $t \geq 0$ and \cref{eq:open-loop-DE} holds a.e.
\end{defi}
\begin{rem}
We do \emph{not} require existence of strong solutions for arbitrary input signals $u$. 
On the other hand, if $(x^\star, u^\star)$ is a steady-state pair, then $x^\star$ (as a constant function of time) is clearly a strong solution to \cref{eq:open-loop-DE} with (constant) input $u^\star$.
\end{rem}
\begin{ex}[Bounded linear control]
\label{ex:bounded-linear-control}
When $A$ has the form $A(x, u) = A(x) + Bu$ where $A : X \rightrightarrows X $ is maximal dissipative and $B : U \to X$ is a continuous linear operator, existence and uniqueness of strong open-loop solutions is guaranteed by \cite[Proposition 3.3]{Bre73book}.
\end{ex}
We now introduce the notion of incremental impedance passivity, which plays a key role in our work. 

\begin{defi}[Incremental impedance passivity]
\label{def:IIP}
We say that the control system \cref{eq:open-loop-DE} with output \cref{eq:output} is \emph{incrementally  impedance passive} if,
given absolutely continuous input signals $u_1, u_2 : \R^+ \to U$, all strong solutions $x_1,x_2$ associated with $u_1,u_2$, respectively, satisfy a.e.
\begin{equation}
\label{eq:inc-imp-pas}
\frac{1}{2} \dt{} \|x_1 - x_2\|^2_X \leq \langle u_1 - u_2, g(x_1, u_1) - g(x_2, u_2) \rangle_{Y}. 
\end{equation}
\end{defi}
This definition naturally extends to the nonlinear setting the concept of infinite-dimensional linear impedance passive system as seen in, e.g., \cite{Sta02,TucWei14}. It is also in line with how finite-dimensional nonlinear incrementally passive systems are defined in, e.g., \cite{JayOrt07}. Note that we restrict ourselves to (in the language of passivity-based control) \emph{quadratic storage functions}.
This restriction is discussed in \cref{sec:conclu}.

\begin{prop} 
\label{prop:indeed}
The control system \cref{eq:open-loop-DE} with output \cref{eq:output} is incrementally impedance passive.
\end{prop}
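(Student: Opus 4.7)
The plan is to read off the required inequality directly from the maximal dissipativity of $\A$ imposed in \cref{as:max-dis-c}, substitute $\dot{x}_i$ for graph elements of $A$ using the notion of strong solution from \cref{def:open-loop-DE}, and conclude by the Hilbert-space chain rule applied to $\|x_1-x_2\|^2_X$. No real obstacle is expected: the assumption is essentially calibrated to make \cref{eq:inc-imp-pas} hold along strong solutions.

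First, I would unfold the dissipativity of $\A$ on the product Hilbert space $X \times Y$. For any $(x_i,u_i) \in \dom(\A) = \dom(A)$ and any selection $\xi_i \in A(x_i,u_i)$, $i = 1,2$, the point $(\xi_i, -g(x_i,u_i))$ belongs to $\A(x_i,u_i)$ by \cref{eq:cal_A}. Dissipativity together with the product inner product structure then yields
\[
\langle \xi_1 - \xi_2, x_1 - x_2 \rangle_X \leq \langle g(x_1,u_1) - g(x_2,u_2), u_1 - u_2 \rangle_Y.
\]

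Second, given absolutely continuous inputs $u_1, u_2$ and associated strong solutions $x_1, x_2$, \cref{def:open-loop-DE} ensures that for a.e. $t \geq 0$ one has $(x_i(t), u_i(t)) \in \dom(A)$ and $\dot{x}_i(t) \in A(x_i(t), u_i(t))$. Applying the preceding pointwise inequality with $\xi_i = \dot{x}_i(t)$ gives, almost everywhere,
\[
\langle \dot{x}_1(t) - \dot{x}_2(t), x_1(t) - x_2(t) \rangle_X \leq \langle u_1(t) - u_2(t), g(x_1(t),u_1(t)) - g(x_2(t),u_2(t)) \rangle_Y.
\]

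Finally, since $x_1, x_2$ are absolutely continuous with values in the Hilbert space $X$, the real-valued map $t \mapsto \|x_1(t) - x_2(t)\|^2_X$ is itself absolutely continuous and its derivative equals $2 \langle \dot{x}_1(t) - \dot{x}_2(t), x_1(t) - x_2(t) \rangle_X$ for a.e. $t$; this is the standard chain rule recalled in, e.g., \cite[Lemme 3.3]{Bre73book}. Substituting into the previous display delivers exactly \cref{eq:inc-imp-pas}. The only mildly delicate ingredient is this last chain rule, which relies on the Hilbert (hence reflexive) structure of $X$ and on the absolute continuity built into the definition of strong solution; everything else is purely algebraic.
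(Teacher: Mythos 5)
Your proof is correct and follows essentially the same route as the paper: unfold the dissipativity of $\A$ on the product space to obtain the incremental inequality $\langle f_1-f_2, x_1-x_2\rangle_X \leq \langle u_1-u_2, g(x_1,u_1)-g(x_2,u_2)\rangle_Y$, evaluate it along strong solutions with $\xi_i=\dot{x}_i(t)$, and conclude with the Hilbert-space chain rule for $\|x_1-x_2\|^2_X$. The only cosmetic difference is your explicit citation of the chain-rule lemma, which the paper simply invokes as "absolute continuity and the chain rule."
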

\begin{proof}

Let $(x_1, u_1) \in \dom(A(\cdot, u_1)) \times Y$  and $(x_2, u_2) \in \dom(A(\cdot, u_2)) \times Y$, i.e.,
$(x_1, u_1),(x_2, u_2)\in\dom(\A)$. By definition of $\A$ being dissipative (see \cref{as:max-dis-c,def:max-dis}),
for all $f_1\in A(x_1, u_1)$ and $f_2 \in A(x_2, u_2)$,
\begin{multline}
\label{eq:sc-IIP}
\langle f_1 - f_2, x_1 - x_2\rangle_X \\ \leq \langle u_1 - u_2, g(x_1, u_1) - g(x_2, u_2)\rangle_Y.
\end{multline}
Let $x_1$ and $x_2$ be two strong solution to \cref{eq:open-loop-DE}. Then, for $i \in \{1, 2\}$, there exists an absolutely continuous input signal $u_i : \R^+ \to U$ such that $\dot{x}_i \in A(x_i, u_i)$ a.e. Absolute continuity of $x_i$ and the chain rule yield $(1/2)(\d / \d t)\|x_1 - x_2\|^2_X = \langle \dot{x}_1 - \dot{x}_2, x_1 - x_2 \rangle_X$ a.e. Using \cref{eq:sc-IIP} we obtain \cref{eq:inc-imp-pas}.
\end{proof}
\begin{rem}
\label{rem:mono-gain}
Incremental impedance passivity implies that the steady-state input-output map $u^\star \in K \mapsto g(x^\star, u^\star) \in Y$ is \emph{monotone}: if $(x^\star, u^\star),(x^\dagger, u^\dagger)$
are steady-state pairs 
then
\begin{equation}
\langle u^\star - u^\dagger, g(x^\star, u^\star) - g(x^\dagger, u^\dagger) \rangle_Y \geq 0.
\end{equation}
\end{rem}

Another consequence of incremental impedance passivity is that fixed initial data and input signal produce at most one solution, as shown in the next lemma.

\begin{lemma}[Uniqueness of open-loop solutions]
\label{lem:uniq-ol}
Given $u : \R^+ \to U$ absolutely continuous and $x_0 \in X$,
there is at most one strong solution to \cref{eq:open-loop-DE}
that satisfies 
$x(0) = x_0$.
\end{lemma}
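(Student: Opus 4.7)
The plan is to derive uniqueness directly from the incremental impedance passivity already established in Proposition~\ref{prop:indeed}. Suppose $x_1, x_2 : \R^+ \to X$ are two strong solutions to \cref{eq:open-loop-DE} corresponding to the same input signal $u : \R^+ \to U$, with $x_1(0) = x_2(0) = x_0$. The idea is to apply \cref{eq:inc-imp-pas} with $u_1 = u_2 = u$, so that the right-hand side collapses to $\langle 0, g(x_1,u) - g(x_2,u)\rangle_Y = 0$.

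This yields
\[
\tfrac{1}{2} \dt{} \|x_1(t) - x_2(t)\|_X^2 \leq 0 \qquad \text{a.e. } t \geq 0.
\]
Since the map $t \mapsto \|x_1(t) - x_2(t)\|_X^2$ is absolutely continuous (as a composition of the absolutely continuous function $x_1 - x_2$ with the smooth squared-norm), integrating from $0$ to any $t > 0$ and using $x_1(0) = x_2(0)$ gives $\|x_1(t) - x_2(t)\|_X^2 \leq 0$, hence $x_1(t) = x_2(t)$ for all $t \geq 0$.

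There is essentially no hard step here: the statement is a direct corollary of \cref{prop:indeed}, and the only minor care needed is to note that the hypothesis of Proposition~\ref{prop:indeed} (absolutely continuous input) is automatically satisfied since we are given such a $u$ by assumption, and that the chain rule for the squared norm of an absolutely continuous Hilbert-space-valued function is justified exactly as in the proof of \cref{prop:indeed}. The argument is really the classical monotonicity-gives-uniqueness trick adapted to the incremental passivity framework.
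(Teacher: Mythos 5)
Your argument is correct and is essentially the paper's own proof: apply incremental impedance passivity (Proposition~\ref{prop:indeed}) with $u_1 = u_2 = u$ so the supply term vanishes, then use absolute continuity of $t \mapsto \|x_1(t)-x_2(t)\|_X^2$ to integrate the a.e.\ inequality and conclude from $x_1(0)=x_2(0)$. No differences worth noting.
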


\begin{proof} Any two strong solutions $x_1, x_2$ to \cref{eq:open-loop-DE} with the same input $u$ must satisfy $(\d / \d t)\|x_1 - x_2\|^2_X \leq 0$ a.e. Since $\|x_1 -x_2\|^2_X$ is absolutely continuous, it follows that $\|x_1(t) - x_2(t)\|^2_X \leq \|x_1(0) - x_2(0)\|^2_X = 0$ for all $t \geq 0$.
\end{proof}

\subsection{Energy dissipation and steady-state detectability}
\label{sec:detec}

Next, we associate to each pair of strong solutions a notion of ``internal energy dissipation''. This will help formulating and discussing open-loop stability as well as special detectability properties of the plant defined by \cref{eq:open-loop-DE,eq:output}.
\begin{defi}[Internal dissipation]
\label{def:h}
Let $x_1$ and $x_2$ be strong solutions to \cref{eq:open-loop-DE} 
with respective input signals $u_1$ and $u_2$. We associate with the pairs $(x_1, u_1)$ and $(x_2, u_2)$ a function $t \mapsto h(x_1, u_1; x_2, u_2)(t)$  defined a.e.\footnote{
Strictly speaking, each $h(x_1, u_1; x_2, u_2)$ is defined on the subset of $(0, +\infty)$ where both $x_1$ and $x_2$ are differentiable, and the complement of this set has Lebesgue measure zero.
} in $(0, +\infty)$ by
\begin{multline}\label{eq:h}
h(x_1, u_1; x_2, u_2) \triangleq \langle u_1 - u_2, g(x_1, u_1) - g(x_2, u_2)\rangle_Y \\ 
- \frac{1}{2}\dt{} \|x_1 - x_2\|^2_X \quad \mbox{a.e.}
\end{multline}
\end{defi}
The function $h$ allows us to write \emph{exact} incremental energy balances. Incremental impedance passivity \cref{eq:inc-imp-pas} implies that $h \geq 0$ a.e. Furthermore, it follows from \cref{eq:h} that pairs of strong solutions satisfy the incremental energy equality
\begin{multline}\label{eq:inc-imp-pass-h}
\frac{1}{2} \dt{} \|x_1 - x_2\|^2_X + h(x_1, u_1; x_2, u_2) =  \\ 
\langle u_1 - u_2, g(x_1, u_1) - g(x_2, u_2)\rangle_Y \quad \mbox{a.e.}
\end{multline}
{
\begin{ex}[Damped linear system] 
\label{eq:damped-linear} In the setting of \cref{ex:LTI}, suppose that the output is given by $y = B^\ast x$ and $A = S - DD^\ast$ where $S \in \R^{n\times n}$ is skew-symmetric and $D \in \R^{n \times m}$.
Then, given a strong solution $x$ with input signal $u$, $h(x, u; 0, 0) = \|D^\ast x\|^2$ a.e.
\end{ex}
}
As suggested by \cref{eq:damped-linear}, we may think of $h$ as the squared norm of some ``virtual output'' involved in an internal dissipative feedback mechanism of the $x$-dynamics. In our approach, further knowledge of $h$ for \emph{arbitrary} open-loop solutions is not required.
On the other hand, for steady-state solutions, $h$ has a more explicit form, as shown below. 

\begin{lemma}[$h$ for steady states]
\label{lem:h-semi} Let $(x^\star, u^\star)$ and $(x^\dagger, u^\dagger)$ be two steady-state pairs. Then,
\begin{equation}\label{eq:h-steady}
h(x^\star, u^\star; x^\dagger, u^\dagger) = \langle u^\star - u^\dagger, g(x^\star, u^\star) - g(x^\dagger, u^\dagger)\rangle_Y.
\end{equation}
\end{lemma}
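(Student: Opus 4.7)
The plan is to apply \cref{def:h} directly to the constant-in-time solutions furnished by the two steady-state pairs. The only real content is recognizing that the time-derivative term in \cref{eq:h} drops out.

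First, I would invoke the remark following \cref{def:open-loop-DE}: since $(x^\star, u^\star)$ and $(x^\dagger, u^\dagger)$ are steady-state pairs, the constant-in-time functions $t \mapsto x^\star$ and $t \mapsto x^\dagger$ are strong solutions to \cref{eq:open-loop-DE} with the (constant, hence absolutely continuous) input signals $t \mapsto u^\star$ and $t \mapsto u^\dagger$. Therefore $h(x^\star, u^\star; x^\dagger, u^\dagger)$ is well-defined by \cref{def:h}.

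Next, since $x^\star$ and $x^\dagger$ are constant in time, so is the map $t \mapsto \|x^\star - x^\dagger\|_X^2$, whence its derivative vanishes almost everywhere in $(0, +\infty)$. Substituting into \cref{eq:h} gives
\begin{equation*}
h(x^\star, u^\star; x^\dagger, u^\dagger) = \langle u^\star - u^\dagger, g(x^\star, u^\star) - g(x^\dagger, u^\dagger)\rangle_Y - 0,
\end{equation*}
which is exactly \cref{eq:h-steady}. There is no real obstacle here: the proof is a one-line consequence of the definition once one observes that steady-state pairs give rise to constant strong solutions. The lemma's purpose seems to be recording this explicit simplification of $h$ for later use, e.g., in combination with \cref{rem:mono-gain} to extract nonnegativity and monotonicity of the steady-state input-output map.
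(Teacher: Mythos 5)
Your proof is correct and is essentially the same as the paper's: both observe that steady-state pairs yield constant strong solutions, so the derivative term in the definition of $h$ vanishes and only the inner-product term remains. No gaps; your version just spells out the details slightly more explicitly.
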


\begin{proof}
The constant function $x^\star$ (resp.\ $x^\dagger$) is a strong solution to \cref{eq:open-loop-DE} with input $u^\star$ (resp.\ $u^\dagger$). \hspace{-1mm} Thus, \cref{eq:h} implies \cref{eq:h-steady}. 
\end{proof}

We now introduce an assumption regarding ``distinguishability'' of the steady states of the plant \cref{eq:open-loop-DE}--\cref{eq:output}.
\begin{as}[Distinguishability of steady states]
\label{as:dist} 
Let $(x^\star, u^\star)$ and  $(x^\dagger, u^\dagger)$ be steady-state pairs.
If $h(x^\star,u^\star; x^\dagger,u^\dagger) = 0$ then $x^\star = x^\dagger$.
\end{as}
\begin{rem}\label{rem:dist} In light of \cref{rem:mono-gain} and \cref{lem:h-semi},
 \cref{as:dist} can be reformulated as \emph{strict} monotonicity of the steady-state input-output map $u^\star\in K \mapsto g(x^\star, u^\star)\in Y$, as $x^\star=x^\dagger$ implies $u^
 \star=u^\dagger$ by \cref{as:A}. In particular, given any $r \in Y$, there exists \emph{at most} one steady-state pair $(x^\star, u^\star)$ such that $g(x^\star, u^\star) = r$. Requiring strict monotonicity of the steady-state input-output map is a standard assumption in, e.g., low-gain integral control, see \cite{DesLin85,LorWei22,LorWei23,Sim20}.
\end{rem}

\begin{ex}
In the setting of \cref{ex:LTI}, \cref{as:dist} amounts to injectivity of the transfer function at $0$. A typical situation where \cref{as:dist} fails to hold is when the system has a second-order form with ``collocated'' input and output:
$
\ddot{x} + Lx = Bu$,  $y = B^\ast \dot{x}
$. Indeed, at steady state the output of such a plant is always zero. In these cases the transfer function has a \emph{transmission zero} at zero, contradicting the \emph{non-resonance condition}, which is a well-known necessary condition for solving the output regulation problem \cite{IsiMar03}.
\end{ex}

\section{Set-point  tracking problem and closed-loop} 
\label{sec:PI}

We begin by describing
the set-point tracking problem and by recalling the projected integral controller from 
\cite{LorWei23},
which we use in place of a classical integrator. This allows the closed-loop system to operate only in a region of the (extended) state space, where the plant enjoys suitable steady-state properties; see \cref{as:A}.
Then, we establish the well-posedness of the proposed closed-loop system, and we derive additional closed-loop properties that will be instrumental in carrying out the stability analysis, presented later in \cref{sec:4}.

\subsection{Integral action and projected dynamics}

Given a constant reference signal $r \in Y$, our goal is to find a control $u$ that guarantees that solutions $x$ to \cref{eq:open-loop-DE} remain bounded and solve the \textit{set-point tracking problem}, i.e.,
\begin{equation}
\label{eq:conv-output}
g(x(t), u(t)) \to r, \quad t \to + \infty.
\end{equation}
In addition, we require that the control $u$ satify the convex constraint $u \in K$.
The standard approach is to augment the $x$-dynamics with an output error integrator governed by $\dot{z} = r - g(x, u)$. Then, passivity-based control suggests closing the loop via the very simple output feedback $u = z$; see \Cref{lem:cl-inc-en}.  Integral action is motivated by the property that at \emph{any} equilibrium $(x^\star,z^\star)$ of the extended $(x, z)$-dynamics we must have $g(x^\star) = r$. To enforce the convex constraint on the control,
we will use a \emph{projected} integral controller instead. 

\begin{defi}[Normal cone]
The normal cone $N_K(z)$ to the convex set $K$ at $z \in K$ is defined as
\begin{equation}
N_K(z) \triangleq \{  w \in Y : \langle w, z - y \rangle_Y \geq 0 ~\mbox{for all}~y \in K \}.
\end{equation}
If $z \not \in K$, we let $N_K(z) \triangleq \emptyset$.
\end{defi}
We have $\dom(N_K) = K$ and also 
$N_K(z) = \{0\}$ for all $z \in K^\circ$.
Furthermore, the set-valued map $N_K : Y \rightrightarrows Y$ is maximal monotone, i.e., $-N_K$ is maximal dissipative; see, e.g., \cite[Corollary~12.18]{RocWet09} in the finite-dimensional setting or \cite[Theorem A]{Roc70b} in general.
Given $r \in Y$, we define the \emph{ 
projected integral controller} as a system described by
\begin{equation}
\label{eq:proj-int}
\dot{z} \in r - g(x, z) - N_K(z).
\end{equation}
Roughly speaking, when $z$ lies in $K^\circ$, $N_K(z) = 0$ and $z$ simply integrates the output error $r - g(x)$; however, whenever $z$ hits the boundary $\partial K$, a force $f \in -N_K(z)$ prevents $z$ from leaving the set $K$. This force $f$ can be determined studying the \textit{principal section} associated to \cref{eq:proj-int}; see \cref{def:principal}. 


\begin{rem}
The projected integral controller \cref{eq:proj-int} can be equivalently formulated, see \cite{BroDan06}, as $\dot{z}=\Pi_K(z,r - g(x, z))$, where $\Pi_K(z,w)\triangleq\arg\min_{v\in T_K(z)}\|w-v\|$, with $T_K\triangleq\{  v \in Y : \langle w, v \rangle_Y \leq 0 ~\mbox{for all}~w \in N_K(z) \}$ being the tangent cone to the set $K$ at $z\in K$. This latter formulation can be approximated (and, thus, implemented) using an anti-windup design, as shown in \cite{HauDor20}. A similar design is proposed in \cite{AstMar22}, for linear single-input single-output operator semigroups with input saturation. In particular, the controller they implement approximates \cref{eq:proj-int} by means of a \textit{deadzone} nonlinearity.
\end{rem}

\begin{rem}[Convergence of the output]
\Cref{eq:conv-output} is to be understood in a formal way. For infinite-dimensional systems, the output $g(x, u)$ may not be defined in a pointwise,  classical sense for all solutions. Our results in \cref{sec:tracking} will guarantee convergence to an equilibrium $x^\star \in \dom(g)$ where $g(x^\star, u^\star) = r$. The matter of actual convergence of the output, which is nontrivial unless $g$ is a continuous function of $X\times Y$, requires case-by-case examination; see also the discussion of \cite[Section 3.4]{VanBri23}.  The case studies in \cref{sec:5} feature different types of output convergence obtained by exploiting additional structure of the systems under consideration.
\end{rem}


\subsection{Closed-loop well-posedness and additional properties}

Let $r \in Y$. The differential inclusions consisting of \cref{eq:open-loop-DE}--\cref{eq:output} in closed loop with \cref{eq:proj-int} have the form:
\begin{subequations}
\label{eq:closed-loop}
\begin{align}
&\dot{x} \in A(x, z), \\
\label{eq:PIC}
&\dot{z} \in  r - g(x, z) - N_K(z).
\end{align}
\end{subequations}
Define the map $\A_r : X \times Y \rightrightarrows X \times Y$ as
\begin{align}
\label{eq:cal_A_r}
\A_r(x, z) &\triangleq (A(x, z), - N_K(z) + r - g(x, u)), \\
\dom(\A_r) &=  \{ (x, z) \in C \times K : x \in  \dom(A(\cdot, z)) \}. \notag
\end{align}
Well-posedness of the closed-loop  inclusions \cref{eq:closed-loop} is understood in terms of existence and uniqueness of strong (resp.\ generalized) solutions to the abstract evolution equation $(\dot{x}, \dot{z}) \in \A_r(x, z)$ for all initial data $(x_0, z_0)$ in $\dom(\A_r)$ (resp.\ $C \times K$). The precise meaning of strong and generalized solutions is given in \cref{def:str-sol,def:gen-sol}.


\begin{prop}
\label{prop:max} 
The  operator $\A_r$ is maximal dissipative. 
\end{prop}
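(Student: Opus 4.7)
The plan is to decompose $\A_r$ as a perturbation of $\A$ by a product normal-cone operator, and then apply the standard sum theorem for maximal monotone operators on a Hilbert space. Define $\mathcal{N} : X \times Y \rightrightarrows X \times Y$ by $\mathcal{N}(x, z) \triangleq \{0\} \times N_K(z)$, which is maximal monotone with $\dom(\mathcal{N}) = X \times K$ (by applying the range condition componentwise to $N_K$). Comparing \cref{eq:cal_A_r} and \cref{eq:cal_A} gives the decomposition
\begin{equation*}
\A_r = \A - \mathcal{N} + (0, r),
\end{equation*}
so $-\A_r$ is, up to a rigid constant shift, the sum of the two maximal monotone operators $-\A$ and $\mathcal{N}$ on $X \times Y$.

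As a warm-up I would quickly verify dissipativity of $\A_r$ directly. For $(f_i, \xi_i) \in \A_r(x_i, z_i)$, writing $f_i \in A(x_i, z_i)$ and $\xi_i = -g(x_i, z_i) + r - n_i$ with $n_i \in N_K(z_i)$, the $r$-terms cancel in the standard inner product on $X \times Y$, and dissipativity of $\A$ (\cref{as:max-dis-c}) together with monotonicity of $N_K$ immediately give
\begin{equation*}
\langle f_1 - f_2, x_1 - x_2 \rangle_X + \langle \xi_1 - \xi_2, z_1 - z_2 \rangle_Y \leq 0.
\end{equation*}

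The real work is maximality. I would invoke Rockafellar's sum theorem (e.g.\ \cite[Corollaire 2.7]{Bre73book}): $T_1 + T_2$ is maximal monotone whenever $T_1, T_2$ are maximal monotone on a Hilbert space and $\dom(T_1) \cap \mathrm{int}(\dom(T_2)) \neq \emptyset$. With $T_1 = -\A$ and $T_2 = \mathcal{N}$, the qualification condition reduces to $\dom(\A) \cap (X \times K^\circ) \neq \emptyset$. This is the one step requiring care: $\dom(\A)$ may have empty interior in $X \times Y$ (since $C$ need not be open in $X$), so it is essential that the constraint in $\mathcal{N}$ is concentrated in the $z$-component, which makes $\mathrm{int}(\dom(\mathcal{N})) = X \times K^\circ$. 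Both standing hypotheses are then used: $K^\circ$ is nonempty by assumption on $K$, and for any $u^\star \in K^\circ$, \cref{as:A} yields $x^\star \in C$ with $0 \in A(x^\star, u^\star)$, so $(x^\star, u^\star) \in \dom(\A) \cap (X \times K^\circ)$. Maximality of $-\A + \mathcal{N}$ follows, and since constant shifts preserve maximal dissipativity, so does maximality of $\A_r$.
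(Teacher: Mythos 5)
Your proposal is correct and follows essentially the same route as the paper: the paper also writes $\A_r = \A + \K_r$ with $\K_r(x,z) = (0, r - N_K(z))$ (i.e.\ it absorbs the constant $r$ into the normal-cone perturbation rather than shifting it off separately), and applies Rockafellar's sum theorem using exactly your qualification argument that $K^\circ \neq \emptyset$ and \cref{as:A} furnish a point of $\dom(\A) \cap (X \times K^\circ)$. The only differences are cosmetic (handling of the constant shift and the redundant direct dissipativity check).
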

\begin{proof}
Recall the maximal dissipative operator $\A : X \times Y \rightrightarrows X \times Y$ from \cref{sec:inc-imp}, with $\dom(\A) = \{ (x,z) \in \dom(A(\cdot, z)) \times Y\}$, and let
\begin{equation}
\K_r(x,z) \triangleq (0, r - N_K(z)), \quad \dom(\K_r) = X \times K
\end{equation}
Clearly, $\dom(\A) \cap \dom(\K_r) = \dom(\A_r)$  and $\A_r = \A + \K_r$. Note that the operator $\A$ represents the original $x$-system in closed-loop with the unconstrained output integrator, while $\K_r$ 
is a trivial extension of $r -N_K$ over $X \times Y$. We observe that
\begin{multline}
    \label{eq:int-rock}
\dom(\A) \cap (\dom(\K_r))^\circ \\ = \{ (x, z) \in C \times K^\circ : x \in \dom(A(\cdot, z)) \}.
\end{multline}
The set in \cref{eq:int-rock} is nonempty: $K^\circ$ is nonempty (by assumption) and, for any $z \in K^\circ$, $\dom(A(\cdot, z))$ is nonempty since, in particular, it contains  a steady state (\cref{as:A}).
Hence, by virtue of
\cite[Theorem 1]{Roc70}, for $\A_r$ to be maximal dissipative, it suffices that $\A$ and $\K_r$ be maximal dissipative. $-N_K$ and, thus, $r - N_K$ are maximal dissipative; $0$ is trivially maximal dissipative and so is $\K_r$ due to its diagonal structure.
Finally, $\A$ is assumed maximal dissipative. 
\end{proof}


\begin{coro}[Closed-loop semigroup generation] 
\label{cor:cl_sem_gen}
There exists a continuous semigroup of contractions $\sg{\S^r}$ on $C \times K$ such that,
given initial data $(x_0, z_0) \in C \times K$,  $(x, z)$ defined by $(x(t), z(t)) \triangleq \S_t^r(x_0, z_0)$ for $t \geq 0$ is the unique generalized solution of the closed-loop inclusions \cref{eq:closed-loop}. If in addition $(x_0, z_0) \in \dom(\A_r)$, then $(x, z)$ is the unique strong solution of \cref{eq:closed-loop}. 
\end{coro}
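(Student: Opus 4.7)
The plan is to reduce the statement to the classical generation theorem for maximal dissipative operators on Hilbert spaces recalled in Appendix~\ref{ap:sg}: a maximal dissipative operator $\mathcal{B}$ generates a continuous semigroup of contractions on $\overline{\dom(\mathcal{B})}$, and for every initial datum in $\dom(\mathcal{B})$ the orbit is the unique strong solution of the associated differential inclusion. Since \cref{prop:max} already furnishes maximal dissipativity of $\A_r$, the whole argument reduces to two points: (i) identifying $\overline{\dom(\A_r)} = C \times K$, and (ii) checking that the abstract inclusion $(\dot{x},\dot{z}) \in \A_r(x,z)$ coincides with the closed-loop system \cref{eq:closed-loop}.

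For (i), I would retain the decomposition $\A_r = \A + \K_r$ used in the proof of \cref{prop:max}. The interiority condition \cref{eq:int-rock}, together with the full strength of Rockafellar's theorem \cite{Roc70}, not only yields maximal dissipativity of the sum but also provides the closure identity $\overline{\dom(\A_r)} = \overline{\dom(\A)} \cap \overline{\dom(\K_r)}$. Combining $\overline{\dom(\A)} = C \times Y$ from \cref{as:max-dis-c} with $\overline{\dom(\K_r)} = X \times K$ (since $K$ is closed) immediately gives $\overline{\dom(\A_r)} = C \times K$, as required.

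For (ii), unpacking the definition \cref{eq:cal_A_r} of $\A_r$ shows that a curve $(x, z)$ satisfies $(\dot{x}, \dot{z}) \in \A_r(x, z)$ a.e.\ on $\R^+$ if and only if it satisfies the closed-loop inclusions \cref{eq:closed-loop}. Strong solutions for initial data in $\dom(\A_r)$ are then given by the orbits $\S^r_t(x_0, z_0)$ via the generation theorem, and generalized solutions for arbitrary $(x_0, z_0) \in C \times K = \overline{\dom(\A_r)}$ follow by extending the semigroup action by continuity, consistently with \cref{def:gen-sol}. Uniqueness of either type of solution is already built into the semigroup structure, but can alternatively be verified by the dissipativity argument of \cref{lem:uniq-ol}.

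The only potentially delicate point is the closure identification in step~(i): for sums of maximal monotone operators only the inclusion $\overline{\dom(A+B)} \subset \overline{\dom A} \cap \overline{\dom B}$ holds a priori, and equality may fail without a qualification hypothesis. What saves us here is precisely the interiority condition \cref{eq:int-rock}, already verified in the proof of \cref{prop:max}, which closes this gap and allows the generation theorem to be applied on the full set $C \times K$.
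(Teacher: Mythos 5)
Your proposal is correct and follows essentially the same route as the paper: maximal dissipativity of $\A_r$ from \cref{prop:max}, identification of $\overline{\dom(\A_r)}$ with $C \times K$, and an application of the generation results \cref{theo:abstract-cauchy,coro:generation} from the appendix. The only difference is that you spell out the closure identification (which the paper dismisses as ``straightforward'') via the qualification condition \cref{eq:int-rock}; this is sound, though the identity $\overline{\dom(\A + \K_r)} = \overline{\dom(\A)} \cap \overline{\dom(\K_r)}$ is not literally part of the statement of Rockafellar's sum theorem but a standard consequence of the same interiority condition, using convexity of $\overline{\dom(\A)} = C \times Y$ and of $X \times K$ with $K^\circ \neq \emptyset$.
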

\begin{proof}
It is straightforward that $\overline{\dom(\A_r)} = C \times K$. To conclude the proof, apply \cref{theo:abstract-cauchy,coro:generation}.
\end{proof}
\begin{rem}
Let $(x, z)$ be a strong solution to the closed-loop inclusions \cref{eq:closed-loop}. Then $x$ is an open-loop strong solution to \cref{eq:open-loop-DE} with input signal $z$, in the sense of \cref{def:open-loop-DE}.
\end{rem}

Next, we derive a closed-loop incremental energy identity.

\begin{prop}[Closed-loop incremental energy inequality]
\label{lem:cl-inc-en}
Let $(x_1, z_1)$ and $(x_2, z_2)$ be two strong solutions to the closed-loop inclusions \cref{eq:closed-loop}. Then,
\begin{multline}
\label{eq:cl-inc-en}
\frac{1}{2} \dt{}  \{ \|x_1 - x_2\|^2_X + \|z_1 - z_2\|^2_Y  \} \\ + h(x_1, z_1;  x_2, z_2) \leq 0 \quad \mbox{a.e.}
\end{multline}
\end{prop}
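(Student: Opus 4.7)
The plan is to extract the $x$-contribution from the definition of $h$, compute the $z$-contribution directly from the projected integrator equation, and combine the two using maximal monotonicity of the normal cone.

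First I would rewrite the identity defining $h$ in \cref{eq:h} (with $u_i$ replaced by $z_i$, which plays the role of the input of the $x$-system along the closed-loop trajectory) as
\begin{equation*}
\frac{1}{2}\dt{}\|x_1 - x_2\|^2_X + h(x_1,z_1;x_2,z_2) = \langle z_1 - z_2, g(x_1,z_1) - g(x_2,z_2)\rangle_Y
\end{equation*}
almost everywhere. Hence, proving \cref{eq:cl-inc-en} reduces to showing that almost everywhere
\begin{equation*}
\frac{1}{2}\dt{}\|z_1 - z_2\|^2_Y + \langle z_1 - z_2, g(x_1,z_1) - g(x_2,z_2)\rangle_Y \leq 0.
\end{equation*}

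Next I would compute the left-hand side from \cref{eq:PIC}. Since $(x_i,z_i)$ is a strong solution, $z_i$ is absolutely continuous and for almost every $t$ there exists $n_i(t) \in N_K(z_i(t))$ such that $\dot{z}_i = r - g(x_i,z_i) - n_i$. The chain rule gives $\tfrac{1}{2}\dt{}\|z_1 - z_2\|^2_Y = \langle \dot z_1 - \dot z_2, z_1 - z_2\rangle_Y$ a.e.; substituting and cancelling the common $r$, the terms $\langle z_1 - z_2, g(x_1,z_1) - g(x_2,z_2)\rangle_Y$ exactly cancel, leaving
\begin{equation*}
\frac{1}{2}\dt{}\|z_1 - z_2\|^2_Y + \langle z_1 - z_2, g(x_1,z_1) - g(x_2,z_2)\rangle_Y = -\langle n_1 - n_2, z_1 - z_2\rangle_Y.
\end{equation*}

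The conclusion then follows from the monotonicity of $N_K$ (recalled in \cref{sec:PI}): since $n_i \in N_K(z_i)$, we have $\langle n_1 - n_2, z_1 - z_2\rangle_Y \geq 0$, so the right-hand side above is nonpositive. Combining with the first display yields \cref{eq:cl-inc-en}. I do not anticipate a real obstacle here; the only subtle point is to make sure that all identities hold pointwise on the full-measure set where both $x_1 - x_2$ and $z_1 - z_2$ are differentiable and where the respective inclusions for $\dot x_i$ and $\dot z_i$ are satisfied, which is handled by absolute continuity of strong solutions.
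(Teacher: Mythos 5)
Your proof is correct and follows essentially the same route as the paper: the paper likewise splits the estimate into the incremental energy identity \cref{eq:inc-imp-pass-h} for the $x$-part (which is just the definition of $h$ with $u_i = z_i$) and an inequality for $\tfrac12\dt{}\|z_1-z_2\|_Y^2$ obtained from \cref{eq:PIC} via dissipativity of $-N_K$, then sums the two. The only difference is cosmetic—you isolate explicit selections $n_i \in N_K(z_i)$ while the paper phrases the same step as $\dot z_i - r + g(x_i,z_i) \in -N_K(z_i)$ plus monotonicity.
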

\begin{proof}
\Cref{eq:PIC} implies that $\dot{z}_i  - r + g(x_i,z_i) \in - N_K(z_i)$. Since $-N_K : Y \rightrightarrows Y$ is dissipative, $\langle \dot{z}_1 -r + g(x_1,z_1) -\dot{z}_2 + r - g(x_2,z_2), z_1 - z_2 \rangle_Y \leq 0$. Thus,
\begin{multline}
\label{eq:z-diff}
\frac{1}{2}\dt{}\|z_1 - z_2\|^2_Y = \langle \dot{z}_1 - \dot{z}_2, z_1 - z_2\rangle_Y \\ \leq - \langle g(x_1,z_1) - g(x_2,z_2), z_1 - z_2\rangle_Y \quad \mbox{a.e.}, 
\end{multline}
where the first equality comes from the chain rule. Summing \cref{eq:z-diff} with the identity \cref{eq:inc-imp-pass-h} stemming from incremental impedance passivity finally yields \cref{eq:cl-inc-en}.
\end{proof}

\begin{coro}[Uniqueness of closed-loop equilibrium]
\label{coro:uni-CL} The closed-loop system \cref{eq:closed-loop} possesses at most one equilibrium.
\end{coro}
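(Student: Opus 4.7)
The plan is to show that any two equilibria of the closed-loop system must in fact be steady-state pairs of the open-loop system, and then use the closed-loop incremental energy inequality \cref{eq:cl-inc-en} together with the distinguishability assumption \cref{as:dist} to force them to coincide.

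First I would unpack what an equilibrium means. A point $(x^\star, z^\star) \in \dom(\A_r)$ is an equilibrium of \cref{eq:closed-loop} iff $(0,0) \in \A_r(x^\star, z^\star)$, which by \cref{eq:cal_A_r} requires $x^\star \in C$, $z^\star \in K$, $0 \in A(x^\star, z^\star)$ and $0 \in r - g(x^\star, z^\star) - N_K(z^\star)$. In particular, since $z^\star \in K$, the pair $(x^\star, z^\star)$ is a steady-state pair of the open-loop plant in the sense of \cref{sec:steady}. Moreover, the constant function $t \mapsto (x^\star, z^\star)$ is a strong solution of the closed-loop inclusions.

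Now suppose $(x_1^\star, z_1^\star)$ and $(x_2^\star, z_2^\star)$ are two equilibria. Viewing each as a constant (hence strong) solution of \cref{eq:closed-loop}, I would apply \Cref{lem:cl-inc-en} to this pair. Since both solutions are constant in time, the derivative term in \cref{eq:cl-inc-en} vanishes identically, so we are left with $h(x_1^\star, z_1^\star; x_2^\star, z_2^\star) \leq 0$ a.e. On the other hand, incremental impedance passivity (\Cref{prop:indeed} combined with \Cref{def:h}) gives $h \geq 0$ a.e. Therefore $h(x_1^\star, z_1^\star; x_2^\star, z_2^\star) = 0$. Applying \cref{as:dist} to the two steady-state pairs then yields $x_1^\star = x_2^\star$; call this common value $x^\star$.

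To conclude $z_1^\star = z_2^\star$, I would invoke \cref{it:u-inject} of \Cref{as:A}: since $z_1^\star, z_2^\star \in K$ both satisfy $0 \in A(x^\star, \cdot)$, the injectivity part of the steady-state assumption forces $z_1^\star = z_2^\star$. I do not expect any real obstacle here; the only subtlety worth flagging is the two-step nature of the argument — closed-loop passivity nails down the plant component $x$, and then the \emph{open-loop} steady-state uniqueness from \cref{as:A} is what pins down the controller component $z$.
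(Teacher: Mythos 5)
Your proof is correct and takes essentially the same route as the paper: the closed-loop incremental energy inequality \cref{eq:cl-inc-en} together with $h\geq 0$ forces $h=0$ between the two equilibria, \cref{as:dist} then gives equality of the plant components, and \cref{it:u-inject} of \cref{as:A} gives equality of the integrator components. The only difference is that you spell out explicitly that closed-loop equilibria are steady-state pairs (and constant strong solutions), a point the paper's proof leaves implicit.
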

\begin{proof}
Let $(x^\star, u^\star), (x^\dagger, u^\dagger) \in \dom(\A_r)$ be two equilibria of \cref{eq:closed-loop}. By \cref{eq:cl-inc-en} and $h\geq0$, $h(x^\star, u^\star;  x^\dagger, u^\dagger) = 0$, which in view of \cref{as:dist} implies $x^\star = x^\dagger$. Part \cref{it:u-inject} of \cref{as:A} then directly gives $u^\star = u^\dagger$.
\end{proof}

Finally, we provide sufficient conditions for compactness of the resolvents $(\lambda \id - \A_r)^{-1}$, $\lambda > 0$. This will be 
needed to obtain relative compactness of closed-loop state trajectories. 
\begin{as}[Sufficient conditions for compactness]
\label{as:compact}
The following hold:
\begin{enumerate}[label=(\roman*)]
\item The space $Y$ is finite-dimensional;
\item \label{it:unif-comp}There exists $\lambda > 0$ such that the map
\begin{equation}  
(f, u)   \mapsto (\lambda \id - A(\cdot, u))^{-1}(f) 
\end{equation}
is compact\footnote{
This map is possibly multivalued. By \emph{compact} we mean that the set of all $x$ satisfying $\lambda x - A(x, u) \owns f$, with $f$ and $u$ varying within fixed bounded subsets of $X$ and $U$, is relatively compact in $X$.
} from $X \times U$ into $X$.
\end{enumerate}
\end{as}
\begin{rem}\label{rem:comp_finit_dim}
If $X$ and $Y$ are both finite-dimensional then \cref{as:compact} is automatically satisfied.
\end{rem}
\begin{ex}
In the setting of \cref{ex:bounded-linear-control}, \cref{as:compact}\cref{it:unif-comp} reduces to compactness of 
$(\lambda \id - A)^{-1}$, $\lambda > 0$.
\end{ex}

\begin{prop}
\label{prop:compact}
The resolvents $(\lambda \id - \A_r)^{-1}$, $\lambda  >0$, are compact from $X \times Y$ into itself.
\end{prop}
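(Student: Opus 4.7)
The plan is to take a bounded sequence $((f_n, h_n))_{n \in \N}$ in $X \times Y$ and extract a convergent subsequence from the corresponding preimages $(x_n, z_n) \triangleq (\lambda \id - \A_r)^{-1}(f_n, h_n)$. By \cref{prop:max}, $\A_r$ is maximal dissipative, so Minty's theorem guarantees that $(\lambda \id - \A_r)^{-1}$ is single-valued, everywhere defined on $X \times Y$, and Lipschitz with constant $\lambda^{-1}$; in particular, the sequence $(x_n, z_n)$ is uniformly bounded in $X \times Y$. Since $Y$ is finite-dimensional by \cref{as:compact}, a first extraction provides $z_n \to z_\infty$ in $Y$.

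It remains to extract a convergent subsequence of $x_n$ in $X$. By the definition of $\A_r$ in \cref{eq:cal_A_r}, the inclusion $(f_n, h_n) \in (\lambda \id - \A_r)(x_n, z_n)$ implies in particular that $\lambda x_n - f_n \in A(x_n, z_n)$. To invoke the compactness hypothesis \cref{as:compact}\cref{it:unif-comp}, we must rewrite this in terms of the specific value $\lambda_0 > 0$ appearing therein. Adding $(\lambda_0 - \lambda) x_n$ on both sides gives
\begin{equation*}
\lambda_0 x_n - \tilde{f}_n \in A(x_n, z_n), \quad \tilde{f}_n \triangleq f_n + (\lambda_0 - \lambda) x_n,
\end{equation*}
so that $x_n \in (\lambda_0 \id - A(\cdot, z_n))^{-1}(\tilde{f}_n)$. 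Since $(f_n)$ and $(x_n)$ are bounded, so is $(\tilde{f}_n)$; together with the boundedness of $(z_n)$, \cref{as:compact}\cref{it:unif-comp} guarantees that $\{x_n\}$ is relatively compact in $X$. A further extraction yields $x_n \to x_\infty$, hence $(x_n, z_n) \to (x_\infty, z_\infty)$ along a subsequence, as required.

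The only genuine obstacle is the mismatch between the arbitrary $\lambda > 0$ in the statement and the single unspecified value $\lambda_0$ for which compactness is assumed. This is resolved by the rewriting trick above: the a priori boundedness of $(x_n)$, itself a consequence of the Lipschitz character of the resolvent, makes the discrepancy term $(\lambda_0 - \lambda) x_n$ harmless because it can be absorbed into a bounded right-hand side. Apart from this observation, the argument is a straightforward decoupling of the $x$- and $z$-components, exploiting finite-dimensionality of $Y$ for the latter and \cref{as:compact}\cref{it:unif-comp} for the former.
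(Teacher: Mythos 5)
Your proof is correct, and its core is the same as the paper's: decouple the two components, use finite-dimensionality of $Y$ to handle $z$, and use \cref{as:compact}\cref{it:unif-comp} (with $z$ ranging in a bounded set) to handle $x$, the a priori boundedness of the preimages coming from the Lipschitz/contraction property of the resolvent of the maximal dissipative operator $\A_r$. Where you genuinely diverge is in bridging the gap between the arbitrary $\lambda>0$ of the statement and the single $\lambda_0$ of \cref{as:compact}: the paper invokes the resolvent identity, writing $\lambda(\lambda\id-\A_r)^{-1}$ as $\mu(\mu\id-(\lambda\mu^{-1}\id+\lambda(1-\lambda\mu^{-1})(\lambda\id-\A_r)^{-1}))^{-1}$ to conclude that compactness of the nonlinear resolvents is independent of the parameter, and then verifies compactness only at $\lambda_0$; you instead absorb the discrepancy directly in the $x$-inclusion, rewriting $\lambda x_n-f_n\in A(x_n,z_n)$ as $\lambda_0 x_n-\tilde f_n\in A(x_n,z_n)$ with $\tilde f_n=f_n+(\lambda_0-\lambda)x_n$ bounded, so that the hypothesis at $\lambda_0$ applies verbatim. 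Your shift trick is more elementary and self-contained (it never needs the resolvent identity), while the paper's route isolates a reusable general fact — that compactness of $(\lambda\id-\A_r)^{-1}$ is independent of $\lambda>0$ for any maximal dissipative operator — which is of independent interest. Both arguments are complete; your sequential formulation is equivalent to the paper's bounded-set formulation since $X\times Y$ is a metric space.
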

\begin{proof}
Since the closed-loop generator $\A_r$ is maximal dissipative, for all $\lambda > 0$, $(\lambda \id - \mathcal{A}_r)^{-1}$ is well-defined and  a contraction on $X \times Y$. Furthermore, it follows from the resolvent identity \cite{Bre73book} that $\lambda( \lambda \id - \A_r)^{-1}$ can be written as
\begin{equation}
\mu(\mu \id - (\lambda \mu^{-1} \id + \lambda(1 - \lambda \mu^{-1})(\lambda \id - \A_r)^{-1}))^{-1},
\end{equation}
for any $\lambda, \mu > 0$. Thus, the compactness property is independent of the choice of $\lambda > 0$, and it is enough to prove compactness for $\lambda > 0$  
as in \cref{as:compact}.
For any $(f, y) \in X \times Y$, there exists a unique $(x, z) \in \dom(\A_r)$:
\begin{subequations}
\label{eq:max-cl}
\begin{align}
\label{eq:max-cl-x}
&\lambda x - A(x, z) \owns f, \\
&\lambda z  + N_K(z) \owns r + y - g(x,z).
\end{align}
\end{subequations}
Let $\B$ be a fixed bounded subset of $X \times Y$. By contractivity, $(\lambda \id - \A_r)^{-1}\B$ is bounded in $X \times Y$. 
In particular, when $(f, y)$ varies in $\B$, the corresponding $z$ remain bounded in $Y$, which, by \cref{as:compact} is finite dimensional. Thus, all such $z$ lie in a relatively compact subset of $Y$. Moreover, by \cref{it:unif-comp} in \cref{as:compact}, the set of all $x$ solving \cref{eq:max-cl-x} when $(f, z)$ is in a fixed bounded set of $X \times Y$ is relatively compact in $X$. Therefore, $(\lambda \id - \A_r)^{-1}\B$ is relatively compact in $X \times Y$.
\end{proof}

\section{Output tracking and stability analysis}
\label{sec:4}
\label{sec:tracking}



In this section we present our main results.
\Cref{theo:strict-tracking} addresses a special class of systems, which we call \emph{strictly} output incrementally impedance passive, and provides sufficient conditions for output tracking in terms of open-loop assumptions only. In contrast, \Cref{prop:stab-detec} is more general, but requires a closed-loop detectability property.

\begin{defi}[Feasible references]
\label{def:feasible}
A reference $r \in Y$ is said to be \emph{feasible} if there exists
$u^\star \in K$
such that the\footnote{As stated in \cref{rem:dist}, such a pair is unique (in $C \times K$).} steady-state pair $(x^\star, u^\star)$ satisfies $g(x^\star,u^\star) = r$.
\end{defi}
\begin{lemma}
\label{lem:feasible}
If a reference $r \in Y$ is feasible, then the corresponding steady-state pair $(x^\star, u^\star)$ is 
an equilibrium of the closed loop \cref{eq:closed-loop}. 
\end{lemma}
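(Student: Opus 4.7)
The plan is to show that, with the feedback variable $z$ set equal to $u^\star$, the pair $(x^\star,u^\star)$ is a zero of the multivalued map $\A_r$ defined in \cref{eq:cal_A_r}. Equivalently, I want to verify three things: that $(x^\star,u^\star)\in\dom(\A_r)$, that $0\in A(x^\star,u^\star)$, and that $0\in r-g(x^\star,u^\star)-N_K(u^\star)$.

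First I would observe that, since $(x^\star,u^\star)$ is a steady-state pair, by definition $x^\star\in C$, $u^\star\in K$, and $0\in A(x^\star,u^\star)$. In particular $x^\star\in\dom(A(\cdot,u^\star))$, so $(x^\star,u^\star)$ belongs to $\dom(\A_r)$ according to \cref{eq:cal_A_r}. This immediately takes care of the first two points and of the $x$-component of the equilibrium condition.

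Second, for the $z$-component, I would use the feasibility assumption $g(x^\star,u^\star)=r$ to reduce the required inclusion to $0\in -N_K(u^\star)$, i.e.\ $0\in N_K(u^\star)$. The key (trivial) observation is that the normal cone of a convex set at any of its points always contains the origin: for any $y\in K$, $\langle 0, u^\star - y\rangle_Y=0\geq 0$, so $0\in N_K(u^\star)$ by the very definition of $N_K$ given just before \cref{eq:proj-int}. Concluding, $0\in r-g(x^\star,u^\star)-N_K(u^\star)$, so $(0,0)\in\A_r(x^\star,u^\star)$ and the pair is indeed an equilibrium of the closed-loop inclusions \cref{eq:closed-loop}.

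The statement is essentially a consistency check between the notions of steady-state pair, feasibility, and the projected integrator dynamics, so no real obstacle is expected; the only subtlety worth stating explicitly is the fact that $0\in N_K(z)$ for every $z\in K$, which is what allows the projection term to ``vanish'' at the desired equilibrium despite $u^\star$ possibly lying on $\partial K$.
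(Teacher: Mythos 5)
Your proposal is correct and follows essentially the same route as the paper: both verify $(x^\star,u^\star)\in\dom(\A_r)$, use $0\in A(x^\star,u^\star)$ and feasibility $g(x^\star,u^\star)=r$, and conclude via the fact that $0\in N_K(u^\star)$ for any $u^\star\in K$. The paper additionally cites \cref{lem:equilibria} to identify zeros of $\A_r$ with fixed points of the closed-loop semigroup, a step you state implicitly but which changes nothing of substance.
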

\begin{proof}
By \cref{lem:equilibria}, it suffices to prove that $0 \in \A_r(x^\star, u^\star)$.
It is clear that $(x^\star, u^\star) \in \dom(\A_r)$ and we already have $0 \in A(x^\star, u^\star)$. By definition,
$g(x^\star, u^\star) = r$. Thus the second ``coordinate'' of $\A_r(x^\star, u^\star)$ is the set $-N_K(u^\star)$, which also contains $0$ since $u^\star \in K$.
\end{proof}



\subsection{Set-point tracking for strictly output passive systems}
The following definition is inspired by the notion of strict output passivity from, e.g., \cite{VdS00}, \cite{JayWei09}, \cite[Definition~6.3]{Kha02}. 

\begin{defi}[Strict output incremental passivity]
\label{def:SOIIP}
The open-loop system \cref{eq:open-loop-DE}--\cref{eq:output} is said to be \emph{strictly} output incrementally  passive if, given pairs $(x_1, u_1)$ and $(x_2, u_2)$ of strong solutions to \cref{eq:open-loop-DE} and corresponding inputs, $h(x_1, u_1; x_2, u_2) = 0$ a.e.\ implies $g(x_1, u_1) = g(x_2,u_2)$ a.e.
\end{defi}

\begin{rem}\label{rem:SOIIP} 
The incremental version of the strict output passivity property defined in \cite{Kha02} would read as follows: there exists $k > 0$ such that, for any pairs 
given pairs $(x_1, u_1)$ and $(x_2, u_2)$ of strong solutions to \cref{eq:open-loop-DE} and corresponding inputs, 
\begin{multline}\label{eq:SOIIP}
\frac{1}{2}\dt{} \|x_1 - x_2\|^2_X \leq \langle u_1 - u_2, g(x_1, u_1) - g(x_2,u_2)\rangle_Y \\ - k \|g(x_1, u_1) - g(x_2, u_2)\|^2_Y \quad \mbox{a.e.}, 
\end{multline}
for some $k>0$.
If \cref{eq:SOIIP} holds, then $h(x_1, u_1; x_2, u_2)\geq k\|g(x_1, u_1)-g(x_2, u_2)\|^2$ a.e. by \cref{eq:h}, and the open-loop system \cref{eq:open-loop-DE}--\cref{eq:output} meets the requirement of \cref{def:SOIIP}.
\end{rem}

\begin{ex}
Consider the system $\dot{x} = -x^{1/3} + u$, $g(x,u) = x$, with $X = Y = U = \R$. For such a system, $h$ is given by 
\begin{equation}
h(x_1, u_1; x_2, u_2) = (x_1^{1/3} - x_2^{1/3})(x_1 - x_2),
\end{equation}
and the property of strict output impedance passivity in the sense of \cref{def:SOIIP} is satisfied. However, \cref{eq:SOIIP} fails to hold due to the fact that $x^{1/3}x|x|^{-2} \to 0$ when $|x| \to + \infty$.
\end{ex}

We introduce next the notion of \emph{$h$-detectable} steady states. 
\begin{defi}[$h$-detectable steady states]
\label{as:OL-detec}
Let $(x^\star, u^\star)$ be a steady-state pair. We say that $(x^\star, u^\star)$ is \emph{$h$-detectable} if, given any strong  solution $x$ to \cref{eq:open-loop-DE} with constant input $u^\star$,
$h(x, u^\star; x^\star, u^\star) = 0$ a.e.\ implies $x(t) = x^\star$ for all $t \geq 0$.
\end{defi}
\begin{rem}
In the setting of \cref{as:OL-detec}, by the open-loop  incremental energy equality \cref{eq:inc-imp-pass-h}, the condition $x(t) = x^\star$ for all $t\geq 0$ is \emph{equivalent} to $x(t) \to x^\star$ as $t \to + \infty$, the latter being closer to the standard detectability terminology.
\end{rem}

We need to discuss \cref{as:OL-detec},  as a detectability property stated without explicit mention of an output may seem strange at first glance. Setting $u = u^\star$ in \cref{eq:open-loop-DE} and viewing $h(x, u^\star; x^\star, u^\star)$ as the (squared) norm of some ``virtual output'', \cref{as:OL-detec} reduces to {zero-state detectability} of the plant, as defined in, e.g., \cite{ByrIsi91,SepJan12}. Furthermore, under the additional assumption that the system \cref{eq:open-loop-DE}--\cref{eq:output} is strictly output incrementally impedance passive in the sense of \cref{def:SOIIP}, \cref{as:OL-detec} amounts to zero-state detectability of the plant with respect to the actual output $y = g(x, u)$.

We are now ready to state our first output tracking theorem, whose proof is postponed to \cref{sec:closed-loop-cond}.
\begin{theo}[Output tracking for strictly output passive systems]
\label{theo:strict-tracking}
Suppose that
\begin{enumerate}[label=(\roman*)]
    \item The open-loop system \cref{eq:open-loop-DE}--\cref{eq:output} is strictly {output} incrementally passive;
    \item All steady-state pairs are $h$-detectable.
\end{enumerate}
Let $r \in Y$ be feasible and let $(x^\star, u^\star) \in \dom(A(\cdot, u^\star)) \times K$ be the unique (among elements of $C \times K$) pair of steady-state solution and input such that $g(x^\star, u^\star) = r$. Then, for all initial data $(x_0, z_0) \in C \times K$, the corresponding solution $(x, z) \in \C(\R^+, X \times Y)$ of the closed loop \cref{eq:closed-loop} satisfies
\begin{equation}
\label{eq:ASb}
(x(t), z(t)) \to (x^\star, u^\star) \quad \mbox{in}~X \times Y, \quad t \to + \infty.
\end{equation}
Moreover, for any $r\in Y$ and for all initial data $(x_0, z_0) \in C \times K$, the convex constraint $z(t)\in K$ holds for all $t\geq0$.
\end{theo}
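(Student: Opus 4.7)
The plan is to combine a closed-loop LaSalle-type argument with a second, nested open-loop LaSalle argument that exploits $h$-detectability. The invariance claim $z(t)\in K$ for all $t\geq 0$ is immediate from \cref{cor:cl_sem_gen}, since the contraction semigroup $S^r$ acts on $C\times K$. For convergence I fix a feasible reference $r$ with associated closed-loop equilibrium $(x^\star,u^\star)$ (\cref{lem:feasible}) and introduce the Lyapunov candidate $V(t) = \tfrac{1}{2}(\|x(t)-x^\star\|^2_X + \|z(t)-u^\star\|^2_Y)$. Applying \cref{lem:cl-inc-en} to $(x,z)$ and the constant closed-loop solution $(x^\star,u^\star)$ gives $\dot V + h(x,z;x^\star,u^\star)\leq 0$ a.e., so $V$ is non-increasing and the orbit is bounded. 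Since $\A_r$ has compact resolvents by \cref{prop:compact}, the orbit of the contractive semigroup is relatively compact in $X\times Y$, and hence the $\omega$-limit set $\Omega$ is nonempty, compact, $S^r$-invariant, and $V$ is constant on $\Omega$.

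Next I analyze orbits $(\tilde x,\tilde z)$ lying in $\Omega$. Constancy of $V$ gives $h(\tilde x,\tilde z;x^\star,u^\star)=0$ a.e., so strict output incremental passivity (\cref{def:SOIIP}) forces $g(\tilde x,\tilde z)=r$ a.e. Substituting this into \cref{eq:PIC} yields $\dot{\tilde z}\in -N_K(\tilde z)$ a.e.; since $0\in N_K(\tilde z)$ is the minimum-norm element selected by the strong solution, $\tilde z$ is constant, say $\tilde z\equiv \tilde z_0\in K$, and $\tilde x$ is then a strong open-loop solution of $\dot{\tilde x}\in A(\tilde x,\tilde z_0)$. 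Let $(x^\dagger,\tilde z_0)$ be the unique steady-state pair with input $\tilde z_0$ furnished by item \cref{it:A-uss} of \cref{as:A}. Applying \cref{prop:indeed} to $\tilde x$ and the constant solution $x^\dagger$ yields $\dt{}\tfrac{1}{2}\|\tilde x-x^\dagger\|^2_X = -h(\tilde x,\tilde z_0;x^\dagger,\tilde z_0)$ a.e. The $x$-trajectory lies in the compact $X$-projection of $\Omega$, so its $\omega$-limit $\Omega'\subset X$ is nonempty; using the $S^r$-invariance of $\Omega$ and the constancy $\tilde z\equiv \tilde z_0$ along orbits in $\Omega$, one checks that $\Omega'$ is invariant under the open-loop dynamics with frozen input $\tilde z_0$. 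A second LaSalle argument then forces $h(\bar x,\tilde z_0;x^\dagger,\tilde z_0)\equiv 0$ on orbits $\bar x\subset\Omega'$, and the $h$-detectability of the steady-state pair $(x^\dagger,\tilde z_0)$ collapses $\Omega'$ to $\{x^\dagger\}$. Hence $\tilde x(t)\to x^\dagger$ and, by closedness of $\Omega$, $(x^\dagger,\tilde z_0)\in\Omega$.

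To conclude, applying the previous analysis to the closed-loop orbit through $(x^\dagger,\tilde z_0)\in\dom(\A_r)\cap\Omega$ shows that its $x$-component is a strong open-loop solution starting at the steady state $x^\dagger$; by \cref{lem:uniq-ol} it is constant, so $(x^\dagger,\tilde z_0)$ is a closed-loop equilibrium, and \cref{coro:uni-CL} identifies it with $(x^\star,u^\star)$. Since $V$ is constant on $\Omega$ and vanishes at $(x^\star,u^\star)\in\Omega$, I obtain $\Omega=\{(x^\star,u^\star)\}$, proving \cref{eq:ASb} for strong solutions; the extension to arbitrary $(x_0,z_0)\in C\times K$ follows from the density of $\dom(\A_r)$ in $C\times K$ together with the contractivity of $S^r$. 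I expect the main obstacle to be the nested LaSalle step, which requires transferring the invariance and compactness of $\Omega$ to the open-loop flow with frozen input $\tilde z_0$, even though no open-loop semigroup is assumed a priori.
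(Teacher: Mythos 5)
Your proposal is correct in substance and relies on the same ingredients as the paper (closed-loop LaSalle argument with compact resolvents from \cref{prop:compact}, strict output incremental passivity to freeze the integrator state, open-loop $h$-detectability, and uniqueness of closed-loop equilibria), but it is organized along a genuinely different route. The paper proves \cref{theo:strict-tracking} by showing that $(x^\star,u^\star)$ is $h$-detectable in the closed-loop sense of \cref{def:cl-detec} --- applying your ``inner'' argument (strict passivity $\Rightarrow g=r$ $\Rightarrow$ $z$ constant by the minimal-section property of \cref{theo:abstract-cauchy}; a limit-set argument with frozen input; open-loop detectability; then \cref{coro:uni-CL}) to an \emph{arbitrary} strong closed-loop solution with $h(\cdot\,;x^\star,u^\star)=0$ --- and then simply invokes \cref{prop:stab-detec}, whose proof carries the outer LaSalle machinery plus a covering argument that is needed there because closed-loop detectability only yields asymptotic convergence. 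You instead inline the outer LaSalle step and run the nested argument inside the $\omega$-limit set $\Omega$; the payoff is that you bypass the covering argument, since open-loop $h$-detectability (\cref{as:OL-detec}) gives $\bar x\equiv x^\dagger$ pointwise, you exhibit the closed-loop equilibrium $(x^\dagger,\tilde z_0)$ inside $\Omega$, and constancy of $V$ on $\Omega$ then collapses $\Omega$ to a single point. The obstacle you flag dissolves exactly as in the paper's own Step~2: no open-loop semigroup is needed, because every closed-loop orbit starting in $\Omega$ stays in $\Omega$, has $h(\cdot\,;x^\star,u^\star)=0$, hence output $r$ and constant $z$-component, so the closed-loop semigroup $\sg{\S^r}$ itself realizes the open-loop flow with frozen input $\tilde z_0$, and the distance $\|\bar x(t)-x^\dagger\|_X$ is constant on orbits in $\Omega'$ by the sequential characterization of the limit set together with \cref{eq:inc-imp-pass-h} for equal inputs. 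One point you should make explicit is that $\Omega\subset\dom(\A_r)$ (via the lemma of \cite{CraPaz69} used in the proof of \cref{prop:stab-detec}), so that orbits in $\Omega$ are strong solutions; without this, $h$, the energy identity, and \cref{as:OL-detec} do not apply to them.
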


\begin{rem}
\cref{theo:strict-tracking} requires that \emph{all} steady-state pairs be $h$-detectable, not just the closed-loop equilibrium's one. 
\end{rem}

The hypotheses of \cref{theo:strict-tracking} guarantee that the equilibrium $(x^\star, u^\star)$ is ``closed-loop $h$-detectable'' in the sense of \cref{def:cl-detec}. In the single-valued, finite-dimensional context, similar ideas are found in \cite{Jay05,JayWei09}, where it is 
shown that, under suitable assumptions, open-loop detectability properties are inherited by the closed loop. Another possibility, as, e.g., in \cite[Proposition~3]{JayOrt07}, is to assume \emph{a priori} that closed-loop equilibria possess desired detectability properties. This is the approach taken in \cref{prop:stab-detec}. 



\subsection{Closed-loop sufficient condition for output tracking}

\label{sec:closed-loop-cond}

In this section we introduce the notion of $h$-detectability for closed-loop equilibria and we show that such equilibria are asymptotically stable. With these tools at hand, we will be able to prove \cref{theo:strict-tracking} as a consequence of \cref{prop:stab-detec}.


\begin{defi}[Closed-loop $h$-detectability] \label{def:cl-detec}
Let $r \in Y$ and let $(x^\star, u^\star) \in \dom(\A_r)$ be
an equilibrium  of the closed-loop system \cref{eq:closed-loop}. 
We say that $(x^\star, u^\star)$ is \emph{$h$-detectable} if, given any strong solution $(x, z)$ to \cref{eq:closed-loop}, $h(x, z;  x^\star, u^\star) = 0$ a.e.\ implies $x(t) \to x^\star$ in $X$ as $t \to + \infty$.
\end{defi}

Our second main result is presented next.
When strict output incremental impedance passivity does not hold, but closed-loop equilibria are nonetheless detectable, \cref{prop:stab-detec} guarantees closed-loop stability and set-point output tracking. 
Depending on the system under consideration, one may use \cref{theo:strict-tracking} or \cref{prop:stab-detec}.

 \begin{theo}[Detectable equilibria are asymptotically stable] 
 \label{prop:stab-detec} 
 Let $r \in Y$
 and let $(x^\star, u^\star) \in \dom(\A_r)$ be an $h$-detectable equilibrium of the closed loop \cref{eq:closed-loop}.
Then for all initial data $(x_0, z_0) \in C \times K$ the corresponding solution $(x, z) \in \C(\R^+, X \times Y)$ to \cref{eq:closed-loop} satisfies
\begin{equation}
\label{eq:AS}
(x(t), z(t)) \to (x^\star, u^\star) \quad \mbox{in}~X \times Y, \quad t \to + \infty.
\end{equation}
Moreover, for any $r\in Y$ and for all initial data $(x_0, z_0) \in C \times K$, the convex constraint $z(t)\in K$ holds for all $t\geq0$.
\end{theo}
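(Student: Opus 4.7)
The plan is to combine a LaSalle--Krasovskii-type invariance argument with the closed-loop contractivity from Proposition~\ref{lem:cl-inc-en} and the compactness of the resolvents established in Proposition~\ref{prop:compact}. Well-posedness and the invariance $z(t) \in K$ for every $t \geq 0$ follow at once from Corollary~\ref{cor:cl_sem_gen}. For the convergence \cref{eq:AS}, since $\S^r$ is a contraction semigroup fixing $(x^\star, u^\star)$ and $\dom(\A_r)$ is dense in $C \times K$, a standard $\varepsilon/3$ argument reduces the problem to initial data $(x_0, z_0) \in \dom(\A_r)$, for which $(x, z)$ is a strong solution of \cref{eq:closed-loop}.

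Set $V(t) \triangleq \|x(t) - x^\star\|_X^2 + \|z(t) - u^\star\|_Y^2$. Proposition~\ref{lem:cl-inc-en}, applied with the constant trajectory $(x^\star, u^\star)$, yields $\tfrac{1}{2}\dot{V}(t) + h(x, z; x^\star, u^\star)(t) \leq 0$ a.e., so $V$ is non-increasing and converges to some $V_\infty \geq 0$; in particular, the orbit is bounded in $C \times K$. Combined with the compactness of the resolvents of $\A_r$, a standard precompactness result for contraction semigroups with compact resolvent implies that the orbit is relatively compact in $X \times Y$ for $t \geq 1$. Hence the $\omega$-limit set $\Omega$ is nonempty, compact and positively invariant under $\S^r$, and by continuity $V \equiv V_\infty$ on $\Omega$. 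Pick any trajectory $(\xi, \zeta)$ lying in $\Omega$: since $V(\xi(\cdot), \zeta(\cdot)) \equiv V_\infty$, the energy inequality together with $h \geq 0$ forces $h(\xi, \zeta; x^\star, u^\star) = 0$ a.e. Invoking $h$-detectability (Definition~\ref{def:cl-detec}) then gives $\xi(t) \to x^\star$ as $t \to \infty$, and since $V \equiv V_\infty$ on $\Omega$ we deduce $\|\zeta(t) - u^\star\|_Y^2 \to V_\infty$.

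To conclude, extract $t_n \to \infty$ with $\zeta(t_n) \to \zeta_\infty$, so that $(x^\star, \zeta_\infty) \in \Omega$. By continuous dependence of $\S^r_s$ on initial data combined with $\xi(t_n + s) \to x^\star$ for every $s \geq 0$, the trajectory $s \mapsto \S^r_s(x^\star, \zeta_\infty)$ must have $x$-component identically equal to $x^\star$. Reading the $x$-equation pointwise along this trajectory yields $0 \in A(x^\star, z(s))$ for a.e.\ $s$, and part~(ii) of Assumption~\ref{as:A} forces the $z$-component to be identically $u^\star$, whence $\zeta_\infty = u^\star$ and therefore $V_\infty = 0$. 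The main technical obstacle lies in this final step: the pointwise reading of the $x$-equation is only guaranteed for strong solutions, whereas a priori $(x^\star, \zeta_\infty)$ need not belong to $\dom(\A_r)$. This must be handled by approximating $(x^\star, \zeta_\infty)$ by elements of the dense subset $\dom(\A_r)$ and passing to the limit using the closedness of the multivalued operator $\A_r$ and the contractivity of $\S^r$.
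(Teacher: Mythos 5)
Your strategy coincides with the paper's proof through the LaSalle stage: density reduction to $\dom(\A_r)$, monotonicity of $V$ via \cref{lem:cl-inc-en}, precompactness of the orbit from compact resolvents, constancy of $V$ on the $\omega$-limit set, hence $h(\xi,\zeta;x^\star,u^\star)=0$ a.e.\ and, by \cref{def:cl-detec}, $\xi(t)\to x^\star$. From that point on you diverge: where the paper runs a finite $\eps$-covering argument to show directly that $\omega(x_0,z_0)\subset\{x^\star\}\times K$, you instead extract $t_n\to\infty$ with $\zeta(t_n)\to\zeta_\infty$, use continuity of $\S_s^r$ together with $\xi(t_n+s)\to x^\star$ to produce a trajectory from $(x^\star,\zeta_\infty)\in\omega(x_0,z_0)$ whose $x$-component is frozen at $x^\star$, read the $x$-inclusion, apply \cref{it:u-inject} of \cref{as:A} to get $\zeta_\infty=u^\star$, and conclude $V_\infty=0$. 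This endgame is a legitimate and arguably leaner alternative to the covering argument, and it reaches the same conclusion. (The well-posedness and $z(t)\in K$ statements via \cref{cor:cl_sem_gen} are handled correctly.)

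There is, however, a genuine gap, and it occurs earlier than where you flag it. Both the application of the incremental energy inequality \cref{eq:cl-inc-en} to a trajectory $(\xi,\zeta)$ lying in the $\omega$-limit set (note that $h$ is only defined along \emph{strong} solutions, and \cref{lem:cl-inc-en} is stated for strong solutions) and your later ``pointwise reading'' of $\dot{x}\in A(x,z)$ require that trajectories issued from points of the $\omega$-limit set be strong solutions, i.e., that $\omega(x_0,z_0)\subset\dom(\A_r)$. You acknowledge only the second occurrence, and the repair you sketch --- approximate $(x^\star,\zeta_\infty)$ by elements of the dense set $\dom(\A_r)$ and use closedness of $\A_r$ plus contractivity --- does not work as stated: density of $\dom(\A_r)$ in $C\times K$ says nothing about whether a \emph{given} point lies in $\dom(\A_r)$, and the approximating strong solutions need not have constant $x$-component, so nothing can be read off along them. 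The correct mechanism, which is the one the paper uses, is the a priori bound $\|\A_r^0(x(t),z(t))\|\leq\|\A_r^0(x_0,z_0)\|$ from \cref{theo:abstract-cauchy} (available because you already reduced to $(x_0,z_0)\in\dom(\A_r)$) combined with the demiclosedness of the maximal dissipative operator $\A_r$, see \cite[Lemma 2.3]{CraPaz69}: every point of $\omega(x_0,z_0)$ is a limit of $(x(t_n),z(t_n))$ with $\A_r^0(x(t_n),z(t_n))$ bounded, hence weakly subconvergent, and the limit belongs to $\dom(\A_r)$. Once $\omega(x_0,z_0)\subset\dom(\A_r)$ is established, both of your uses are justified and your argument goes through. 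A smaller point: the ``standard precompactness result'' you invoke is \cite[Theorem 3]{DafSle73}, which requires $0\in\ran(\A_r)$ in addition to compact resolvents; this holds here because $(x^\star,u^\star)$ is an equilibrium, but it should be stated.
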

\begin{proof}
The proof is divided into several steps.

\emph{Step 1: Preliminary remark.} Since the closed-loop semigroup $\sg{\S^r}$ associated to \cref{eq:closed-loop} is contractive and $\dom(\A_r)$ is dense in $C \times K$, it suffices to prove \cref{eq:AS} for initial data $(x_0, z_0) \in \dom(\A_r)$.


\emph{Step 2: LaSalle argument.} We now recall a standard line of arguments in the spirit of LaSalle's invariance principle. Fix $(x_0, z_0) \in \dom(\A_r)$. We have shown in \cref{prop:compact} that the closed-loop generator $\A_r$ has compact resolvent. Besides, $0 \in \ran(\A_r)$ as $(x^\star, u^\star) \in \dom(\A_r)$ is assumed to be an equilibrium of the closed loop \cref{eq:closed-loop}. By \cite[Theorem 3]{DafSle73} this implies that the semi-orbit $\{(x(t), z(t)), t \geq 0\}$ is a relatively compact subset of $X \times Y$. Therefore, the $\omega$-limit set 
\begin{equation}
\omega(w_0, z_0) \triangleq \bigcap_{s \geq 0} \overline{\bigcup_{t \geq s}\{\S_t^r(x_0, z_0)\}},
\end{equation}
where the closure is taken in $X \times Y$,
is a nonempty compact subset of $C \times K$ by \cite[Th\'eor\`eme 1.1.8]{Har91book}, satisfying:
\begin{enumerate}[nosep]
    \item (\emph{Invariance.}) $\S_t^r\omega(x_0, z_0) = \omega(x_0, z_0)$ for all $t \geq 0$;
    \item (\emph{Attractivity.}) $\dist((x(t), z(t)), \omega(x_0, z_0)) \to 0$ as $t \to + \infty$.
\end{enumerate}
Using the sequential characterization of $\omega$-limit sets and \cite[Lemma 2.3]{CraPaz69}, we can show that $\omega(x_0, z_0) \subset \dom(\A_r)$. To obtain \cref{eq:AS}, it suffices to show that $\omega(x_0, z_0) = \{(x^\star, u^\star)\}$.

\emph{Step 3: Closed-loop solutions on the limit set.} We may pick an arbitrary element $(\tilde{x}_0, \tilde{z}_0) \in \omega(x_0, z_0)$. Let $(\tilde{x}(t), \tilde{z}(t)) \triangleq \S_t^r(\tilde{x}_0, \tilde{z}_0)$ for $t \geq 0$. Since $(\tilde{x}_0, \tilde{z}_0) \in \dom(\A_r)$, $(\tilde{x}, \tilde{z})$ is a strong solution to the closed-loop inclusions \cref{eq:closed-loop}. Now, recall that $(x^\star, u^\star)$ is a  fixed point of $\sg{\S^r}$. By virtue of \cite[Theorem 1]{DafSle73}, $\omega(x_0, z_0)$ must lie on a sphere centered around $(x^\star, u^\star)$, which means that $(1/2) (\d / \d t)\{ \|\tilde{x} - x^\star\|^2_X + \|\tilde{z} - z^\star\|^2_Y  \} = 0$. In view of the incremental closed-loop energy inequality \cref{eq:cl-inc-en}, $h(\tilde{x}, \tilde{z}; x^\star, u^\star) \leq 0$, but we also know that, by definition, $h(\tilde{x}, \tilde{z}; x^\star, u^\star) \geq 0$, resulting in
\begin{equation}
\label{eq:h-vanishes}
h(\tilde{x}, \tilde{z}; x^\star, u^\star) = 0 \quad \mbox{a.e.\ in}~(0, +\infty).
\end{equation}
Since $(x^\star, u^\star)$ is assumed detectable in the sense of \cref{def:cl-detec}, \cref{eq:h-vanishes} implies that
$\tilde{x}(t) \to x^\star$ in $X$ as $t \to + \infty$. 

\emph{Step 4: Covering argument.} Next, we wish to prove that $\omega(x_0, z_0) \subset \{x^\star\} \times K$. Let $\eps > 0$. Because $\omega(x_0, z_0)$ is compact, it possesses a finite $\eps$-covering: there exist $N \in \N$ and elements $(\tilde{x}^i_0, \tilde{z}^i_0) \in \omega(x_0, z_0)$, $i =1, \dots, N$, such that
\begin{equation}
\label{eq:covering}
\omega(x_0, z_0) \subset \bigcup_{i=1}^{N} \B_{X\times Y}((\tilde{x}^i_0, \tilde{z}^i_0), \eps).
\end{equation}
For  $i = 1, \dots, N$ and $t \geq 0$, write $(\tilde{x}^i(t), \tilde{z}^i(t)) \triangleq \S_t^r(\tilde{x}^i_0, \tilde{z}^i_0)$. By the previous step, each $\tilde{x}^i$ satisfies 
$\tilde{x}^i(t) \to x^\star$ in $X$ as $t \to + \infty$. 
Thus, for each $i$ there exists $\tau_i \geq 0$ such that $\tilde{x}^i(t) \in \B_X(x^\star, \eps)$ for all $t \geq \tau_i$. Now, set $\tau \triangleq \max \{ \tau_i, i = 1, \dots, N\}$. Pick an arbitrary $(\tilde{x}_0, \tilde{z}_0) \in \omega(x_0, z_0)$ and let $(\tilde{x}(t), \tilde{z}(t)) \triangleq \S_t^r(\tilde{x}_0, \tilde{z}_0)$ for $t \geq 0$. By \cref{eq:covering}, we can find an $i$ such that $\|\tilde{x}_0 - \tilde{x}_0^i\|_X^2 +  \|\tilde{z}_0 - \tilde{z}_0^i\|_Y^2 \leq \eps^2$. Then, for all $t \geq \tau$,
\begin{multline*}\label{eq:ineq-tilde-star}
\|\tilde{x}(t) - x^\star\|^2_X \leq 2\|\tilde{x}(t) - \tilde{x}^i(t)\|^2_X + 2\|\tilde{x}^i(t) - x^\star\|^2_X \\ 
+ 2\|\tilde{z}^i(t) - \tilde{z}(t)\|^2_Y \leq 2 \eps^2 + 2\|\tilde{x}_0 - \tilde{x}_0^i\|_X^2 +  2\|\tilde{z}_0 - \tilde{z}_0^i\|_Y^2 \leq 4\eps^2,
\end{multline*}
where we also used the contraction property of the closed-loop semigroup $\sg{\S^r}$. As the data $(\tilde{x}_0, \tilde{z}_0)$ are chosen arbitrarily in $\omega(x_0, z_0)$, the above shows that $\S_t^r\omega(x_0, z_0) \subset \B_{X}(x^\star, 2\eps) \times K$ for all $t \geq \tau$. But because $\S_t^r\omega(x_0, z_0) = \omega(x_0, z_0)$ for all $t \geq 0$, we, in fact, have $\omega(x_0, z_0) \subset \B_{X}(x^\star, 2\eps) \times K$. Now, recall that $\eps > 0$ was chosen arbitrarily as well. Thus, by letting $\eps \to 0$, we obtain
\begin{equation}
\label{eq:str-omega}
\omega(x_0, z_0) \subset \{x^\star\} \times K.
\end{equation}

\emph{Step 5: Conclusion.} Let $(\tilde{x}_0, \tilde{z}_0) \in \omega(x_0, z_0)$ and $(\tilde{x}(t), \tilde{z}(t)) \triangleq \S_t^r(\tilde{x}_0, \tilde{z}_0)$ for $t \geq 0$. According to \cref{eq:str-omega}, $\tilde{x}(t) = x^\star$ for all $t \geq 0$ and in particular $\dot{x} = 0$. In view of the closed-loop differential inclusions \cref{eq:closed-loop}, we have
\begin{equation}
0 \in A(x^\star, u^\star), \quad 0 \in A(x^\star, \tilde{z}(t)), \quad t \geq 0.
\end{equation}
Recall from \cref{it:u-inject} in \cref{as:A} that the steady-state map $u^\star \to x^\star$ is injective from $K$ to $C$. It results that $\tilde{z}(t) = u^\star$ for all $t \geq 0$. Therefore, $\omega(x_0, z_0) = \{(x^\star, z^\star)\}$ and we get \cref{eq:ASb}. The last statement of the theorem follows from \cref{cor:cl_sem_gen} and the structure of the projected integral controller \cref{eq:proj-int}.
\end{proof}

We can now prove \cref{theo:strict-tracking}.
\begin{proof}[Proof of \cref{theo:strict-tracking}]
By \cref{lem:feasible} we already know that $(x^\star, u^\star)$ is an equilibrium for the closed loop \cref{eq:closed-loop}. Owing to \cref{prop:stab-detec}, it suffices to show that it is $h$-detectable in the sense of \cref{def:cl-detec}.

\emph{Step 1: Preliminaries.}
To this end, let $(x, z)$ be a strong solution to \cref{eq:closed-loop} such that $h(x, z; x^\star, u^\star) = 0$. By strict incremental impedance passivity (\cref{def:SOIIP}), $g(x, z) = g(x^\star, u^\star) = r$ {a.e}. In view of the closed-loop inclusions \cref{eq:closed-loop}, it follows that $z$ is a strong solution to the (maximal dissipative) differential inclusion $\dot{z} \in - N_K(z)$.  Then, by \cref{theo:abstract-cauchy},  $\dot{z}$ is a.e.\ the element of minimal norm in $-N_K(z)$. Since $z$ takes values in $K$, $N_K(z)$ always contains zero; hence,  $\dot{z} = 0$ a.e. 
This further implies that $z$ is equal to some constant $u^\dagger \in K$.

\emph{Step 2: Stability under constant inputs.} Let $x^\dagger \in \dom(A(\cdot, u^\dagger))$ be the corresponding steady state. By hypothesis, the steady-state pair $(x^\dagger, u^\dagger)$ is $h$-detectable; let us then show that $x(t) \to x^\dagger$. As in the proof of \cref{prop:stab-detec}, we consider the associated (nonempty) $\omega$-limit set $\omega(x(0), u^\dagger) \subset \dom(\A_r)$ and we let $(\tilde{x}, \tilde{z})$ be an arbitrary state trajectory in this set. 
It follows from the sequential characterization of $\omega$-limit sets that $\tilde{z} = u^\dagger$ for all $t\geq 0$. On the other hand, using a previous argument, we have $h(\tilde{x},\tilde{z}; x^\dagger, u^\dagger) = h(\tilde{x}, u^\dagger ; x^\dagger, u^\dagger) = 0$ a.e.  By $h$-detectability of $(x^\dagger, u^\dagger)$, this implies $\tilde{x}(t) = x^\dagger$ for all $t\geq 0$. Hence\, we have shown that $\omega(x(0),u^\dagger) = \{(x^\dagger, u^\dagger )\}$ and, thus, $x(t) \to x^\dagger$ as $t\to + \infty$. 

\emph{Step 3: Conclusion.}
As a result, $(x^\dagger, u^\dagger)$ is an equilibrium for the closed loop \cref{eq:closed-loop}: {indeed, the $\omega$-limit set  $\omega(x(0), z(0))$ is exactly $(x^\dagger, u^\dagger)$ and is invariant under $\sg{\S^r}$}. But by \cref{coro:uni-CL}, closed-loop equilibria are unique, meaning that $(x^\dagger, u^\dagger) = (x^\star, u^\star)$. In particular, $x(t) \to x^\star$ in $X$ when $t \to +\infty$, as desired.
\end{proof}

\section{Case studies}\label{sec:5}
We present {three} applications of our abstract framework to different classes of systems. The first case study, in \cref{subsec:RLC}, is to be intended as a tutorial. We consider a simple RLC circuit with a diode, described by a differential inclusion, and we want to regulate a certain voltage while controlling the input current, constrained to remain positive. 
The second case study, in \cref{subsec:pass_l}, considers a wide class of impedance passive infinite dimensional linear system. Finally, the third case study, in \cref{subsec:PDE_ql}, deals with a  parabolic partial differential equation with boundary control and employs more technical arguments from nonlinear analysis. We believe that each of these case studies contains new and nontrivial results.




\subsection{Voltage regulation for an RLC load containing a diode}\label{subsec:RLC}

\begin{figure}
\centering
\includegraphics[width = 0.8\linewidth]{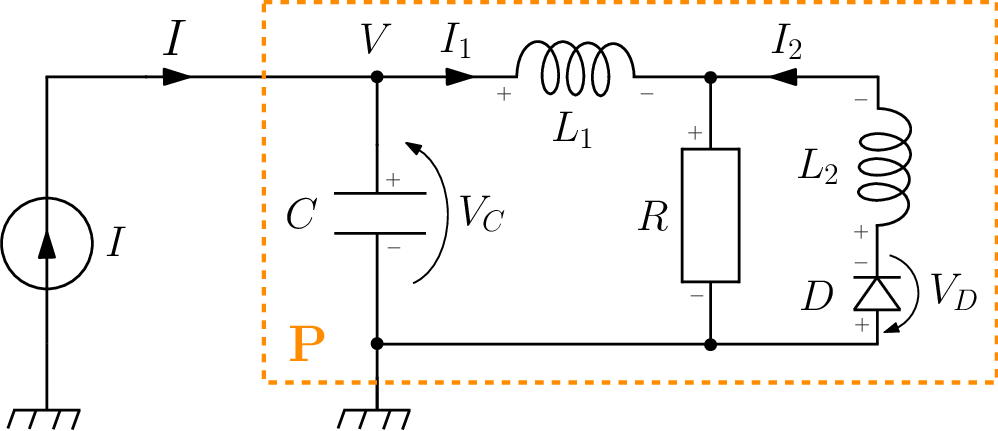}
\caption{RLC load considered in \cref{subsec:RLC}.} 
\label{fig:RLCD}
\end{figure}

We consider the output voltage regulation problem for an RLC load containing a diode, shown in \cref{fig:RLCD}. The control objective is to regulate the voltage $V$ at a desired constant reference voltage $V_{\rm ref}$, by controlling the input current $I$, which is required to be strictly positive at all times. A similar circuit is considered in \cite{Jay05}, although there are three main differences: (i) A diode $D$ is included here, which requires modeling the load through a differential inclusion, see, e.g., \cite{AcaBon10}; (ii) Capacitors and inductors are linear, since we are bound to use a quadratic energy function. (For simplicity, we consider linear resistors, although our theory would allow for nonlinear, possible multivalued, monotone resistors.) (iii) We allow for (convex) constraints on the controlled current $I$. Besides, we mention that systems similar to the RLC load considered here can be found, e.g., in \cite{AcaBon10,BroLoz20,CamHee02,CamSch16,TanBro18}.


We assume $C,L_1,L_2,R>0$ and we assume that $D$ is an ideal diode.
Under these assumptions, with the usual passive-sign convention, the load $\mathbf{P}$ from \cref{fig:RLCD} can be modelled as
\begin{align}\label{eq:P}
\mathbf{P} : \begin{cases}
&\hspace{-2mm} \dot{V}_c = -\frac{I_1}{C} +\frac{I}{C}, \\
&\hspace{-2mm} \dot{I}_1 = \frac{V_C}{L_1}-\frac{R}{L_1}(I_1+I_2), \\
&\hspace{-2mm} \dot{I}_2 = -\frac{R}{L_2}(I_1+I_2)-\frac{V_D}{L_2},\\
&\hspace{-2mm} 0\leq I_2 \perp -V_D\geq0,
\end{cases}
\end{align}
where $I_1$ and $I_2$ are the currents through the inductors $L_1$ and $L_2$, respectively, $V_D$ is the voltage across the diode, and $I$ is the input current. The last expression in \cref{eq:P} is a \textit{complementarity relationship} \cite[Section~2.3]{AcaBon10}, describing the current-voltage characteristic of the ideal diode $D$. It means: if $V_D<0$ then $I_D=I_2=0$, if $I_D=I_2>0$ then $V_D=0$.


As discussed in, e.g., \cite{BroDan06},\cite[Section~2.3.3]{AcaBon10},\cite{HeeSch00}, the complementarity relationship in \eqref{eq:P} is equivalent to
$V_D\in N_{\R^+}(I_2)$,
where $N_{\R^+}$ is the normal cone to the set $\R^+$.
By virtue of this equivalence, we define $A:\R^3\times \R\rightrightarrows \R^3$ as
\begin{align}
\label{eq:A_RLC}
A\left(V_C, I_1, I_2,I\right)&\triangleq
\bbm{-\frac{I_1}{C} +\frac{I}{C} \\
\frac{V_C}{L_1}-\frac{R}{L_1}(I_1+I_2) \\
-\frac{R}{L_2}(I_1+I_2)-\frac{1}{L_2}N_{\R^+}(I_2)},
\\ \dom(A)&=\R^2\times\R^+\times\R.
\end{align}
With this notation, $\mathbf{P}$ from \cref{eq:P} is equivalently described by 
\begin{equation}\label{eq:DI_RLC}
\quad (\dot{V}_C, \dot{I}_1, \dot{I}_2) \in A\left(V_C, I_1, I_2,I\right).
\end{equation}

The \textit{control objective} is to regulate the output voltage 
\begin{equation}\label{eq:output_RLC}
y = g\left(V_C, I_1, I_2,I\right)\triangleq V_C, \qquad \dom(g) = \R^3 \times \R,
\end{equation}
at a desired constant reference $V_{\rm ref}\in\R$ by controlling the input current $I$. We assume that the current $I$ is coming from a non-reversible power supply, hence $I$ must be kept strictly positive at all times. To solve this problem, we employ the projected integrator \cref{eq:proj-int} with $K\triangleq[I_{\min},I_{\max}]$, for some $I_{\min},I_{\max}>0$ depending on the power supply rating. 

The resulting closed-loop system is
\begin{equation}\label{eq:cl_RLC}
\begin{aligned}
(\dot{V}_C, \dot{I}_1, \dot{I}_2) &\in A\left(V_C, I_1, I_2,I\right), \\
\dot{I} &\in  - N_K(I) +V_{\rm ref} - V_C.
\end{aligned}
\end{equation}


We are now ready to state the main result of this case study.

\begin{prop}
\label{prop:RLC_conv}
Let $V_{\rm ref} \in [RI_{\min},RI_{\max}]$. Then, for all initial data $\left(V_C(0),I_1(0),I_2(0), I(0)\right) \in  \R^2\times\R^+\times K$, the corresponding solution 
to the closed loop \cref{eq:cl_RLC} satisfy
\begin{equation}
\left(V_C(t),I_1(t),I_2(t),I(t)\right) \to \left(V_{\rm ref},\frac{1}{R}V_{\rm ref},0,\frac{1}{R}V_{\rm ref}\right)
\end{equation}
as $t \to + \infty$. Moreover, $I(t)\in[I_{\min},I_{\max}]$ for all $t\geq0$.
\end{prop}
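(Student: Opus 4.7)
\medskip
\noindent\textbf{Proof proposal.} I plan to recast \cref{eq:cl_RLC} into the abstract framework of \cref{sec:2,sec:PI} and invoke \cref{prop:stab-detec}. I take $X = \R^3$ equipped with the energy-weighted inner product $\langle (v,i,j),(v',i',j')\rangle_X \triangleq Cvv' + L_1 ii' + L_2 jj'$, so that $\tfrac{1}{2}\|x\|_X^2$ equals the stored electrical energy; $U = Y = \R$; the constraint set in $X$ is $\R^2\times\R^+$; and $K = [I_{\min},I_{\max}]$. For \cref{as:A}, solving $0 \in A(x,u)$ with $u \in K$ gives $I_1^\star = u$ from the first row; the third-row complementarity $I_2^\star \geq 0$, $V_D^\star \leq 0$, $I_2^\star V_D^\star = 0$ together with $V_D^\star = -R(I_1^\star + I_2^\star)$ and $u > 0$ forces $I_2^\star = 0$ and $V_D^\star = -Ru$; the second row then gives $V_C^\star = Ru$. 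The resulting map $u \mapsto (Ru,u,0)$ is injective (yielding \cref{as:A}), and the steady-state input-output map $u^\star \mapsto Ru^\star$ is strictly monotone, giving \cref{as:dist} by \cref{rem:dist}. \cref{as:compa-g} is trivial, and \cref{as:compact} is automatic via \cref{rem:comp_finit_dim}.

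\medskip
\noindent For \cref{as:max-dis-c} I split $\A = \A_0 + \A_1$, where $\A_0$ is the bounded, everywhere-defined linear part encoding the RLC/output coupling and $\A_1(x,u) \triangleq (0,0,-L_2^{-1}N_{\R^+}(I_2),0)$ is the lifted normal cone. A direct computation with the weighted inner product shows that the reactive terms cancel and $\langle \A_0(x,u),(x,u)\rangle_{X\times Y} = -R(I_1+I_2)^2 \leq 0$; being bounded and everywhere defined, $\A_0$ is therefore maximal dissipative. $\A_1$ is maximal dissipative by the product structure and the classical maximal monotonicity of $N_{\R^+}$, and a Lipschitz-perturbation argument shows that the sum $\A$ is maximal dissipative. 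The same computation applied to two strong solutions yields
\begin{equation*}
\tfrac{1}{2}\tfrac{\mathrm{d}}{\mathrm{d} t}\|x_1 - x_2\|_X^2 = \Delta V_C\,\Delta I - R(\Delta I_1 + \Delta I_2)^2 - \Delta I_2\,\Delta V_D,
\end{equation*}
so by \cref{eq:h},
\begin{equation*}
h(x_1,z_1;x_2,z_2) = R(\Delta I_1 + \Delta I_2)^2 + \Delta I_2\,\Delta V_D,
\end{equation*}
both terms being nonnegative (the second by monotonicity of $N_{\R^+}$).

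\medskip
\noindent Since $V_{\rm ref} \in [RI_{\min},RI_{\max}]$, one has $u^\star = V_{\rm ref}/R \in K$; the reference is feasible and the corresponding equilibrium is $((V_{\rm ref}, V_{\rm ref}/R, 0), V_{\rm ref}/R)$ with $V_D^\star = -V_{\rm ref}$. To verify $h$-detectability of this equilibrium in the sense of \cref{def:cl-detec}, let $(x,z)$ be a strong closed-loop solution with $h(x,z;x^\star,u^\star) = 0$ a.e. The complementarity $I_2 V_D = 0$ simplifies $\Delta I_2\,\Delta V_D = I_2(V_D + V_{\rm ref}) = V_{\rm ref} I_2$, so $h = 0$ and $V_{\rm ref}>0$ force $I_2 \equiv 0$ and $I_1 \equiv V_{\rm ref}/R$ (using absolute continuity of the state components). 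Then $\dot I_1 \equiv 0$, and the second row of \cref{eq:A_RLC} yields $V_C \equiv V_{\rm ref}$; in particular $x \equiv x^\star$, so $x(t) \to x^\star$ trivially.

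\medskip
\noindent With these ingredients, \cref{prop:stab-detec} delivers the claimed convergence in $X \times Y$, while the invariance $I(t) \in K$ follows from the structure of the projected integrator (\cref{cor:cl_sem_gen}). I expect the main subtlety to lie in \cref{as:max-dis-c}: the weighted energy inner product is essential to make the (indefinite-looking) RLC/output coupling dissipative, and the scalar normal cone must be carefully lifted to $X \times Y$. The explicit form of $h$ obtained along the way then makes the $h$-detectability check a short algebraic computation, which is why \cref{prop:stab-detec} (rather than \cref{theo:strict-tracking}) is the appropriate vehicle here: strict output incremental passivity does not hold, since $h = 0$ does not constrain $\Delta V_C$ directly without invoking closed-loop information.
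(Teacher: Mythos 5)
Your proposal is correct and its structural part (weighted inner product $\langle x',x\rangle_Q = x^TQx'$ with $Q=\operatorname{diag}(C,L_1,L_2)$, verification of \cref{as:A,as:compa-g,as:dist,as:compact}, and splitting $\A$ into a bounded dissipative linear part plus the lifted normal cone) coincides with the paper's Step 1, up to the minor difference that you invoke a Lipschitz-perturbation argument for the sum where the paper cites the sum rule for maximal monotone operators \cite[Corollary~12.44]{RocWet09}; both are legitimate. Where you genuinely diverge is the detectability step: the paper verifies the \emph{open-loop} hypotheses of \cref{theo:strict-tracking}, namely strict output incremental passivity (\cref{def:SOIIP}) for arbitrary pairs of open-loop solutions and $h$-detectability of \emph{all} steady-state pairs, whereas you verify $h$-detectability of the single closed-loop equilibrium in the sense of \cref{def:cl-detec} and apply \cref{prop:stab-detec} directly. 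Your computation is sound: with $V_D^\star=-V_{\rm ref}$ and the pointwise complementarity $I_2V_D=0$ realized a.e.\ along strong solutions, $h=R(I_1+I_2-V_{\rm ref}/R)^2+V_{\rm ref}I_2$, and $h=0$ with $V_{\rm ref}\geq RI_{\min}>0$ forces $x\equiv x^\star$, so \cref{prop:stab-detec} yields the conclusion. Your route is shorter because it exploits $V_{\rm ref}>0$ at the specific equilibrium; the paper's route is more uniform in that it establishes reusable open-loop properties independent of the reference.

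One side remark in your write-up is wrong, though it does not affect the validity of your proof: strict output incremental passivity \emph{does} hold here, and the paper proves it. From $h(x,I;x',I')=0$ one gets $I_1+I_2=I_1'+I_2'$ and $(f-f')(I_2-I_2')=0$ a.e.; the third equation then gives $L_2\frac{\d}{\d t}(I_2-I_2')=-(f-f')$, hence $\frac{\d}{\d t}|I_2-I_2'|^2=0$, so $\dot I_1=\dot I_1'$ and $\dot I_2=\dot I_2'$ a.e., and the second equation yields $V_C=V_C'$ a.e.\ using only open-loop information. So the paper's \cref{theo:strict-tracking} was available as well; your choice of \cref{prop:stab-detec} is a matter of economy, not necessity.
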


\begin{proof}
The proof is divided into steps. For convenience, we denote $x\triangleq(V_C,I_1,I_2)$, $x^\prime\triangleq(V_C^\prime,I_1^\prime,I_2^\prime)$, $x^\star\triangleq(V_C^\star,I_1^\star,I_2^\star)$.

\emph{Step 1: Structural assumptions.} We check \cref{as:A,as:compa-g,as:max-dis-c,as:dist} (summarized in \cref{table:as}).

\emph{\cref{as:A}.} Let $I^\star\in K$. We want $x^\star\in \dom(A(\cdot, I^\star))$:
\begin{equation}
0\in\bbm{-\frac{I^\star_1}{C} +\frac{I^\star}{C} \\
\frac{V_C^\star}{L_1}-\frac{R}{L_1}(I_1^\star+I_2^\star) \\
-\frac{R}{L_2}(I_1^\star+I_2^\star)-\frac{1}{L_2}N_{\R^+}(I_2^\star)}.
\end{equation}
From the first row, $I_1^\star=I^\star$. Substituting this in the third row,
\begin{equation}
-I^\star\in I_2^\star+\frac{1}{R}N_{\R^+}(I_2^\star),
\end{equation}
which has a unique solution $I_2^\star=(1+\frac{1}{R}N_{\R^+})^{-1}(-I^\star)$ since $N_{\R^+}$ is maximal monotone \cite[Theorem~12.12]{RocWet09}. It can be easily checked that $I_2^\star=0$ for all $I^\star\in K$. (Indeed, at steady-state, no current flows through $D$ in the circuit of Figure~\ref{fig:RLCD}.) Using this in the second row, we get
\begin{equation}\label{eq:equil_RLC}
x^\star=(RI^\star,I^\star,0), \quad I^\star\in K.
\end{equation}
Thus, for each $I^\star\in K$ there exists a unique equilibrium. Similarly, it follows from \cref{eq:equil_RLC}, that there necessarily exists at most one $I^\star\in K$ for each $x^\star$. This proves \cref{as:A}.

\emph{\cref{as:compa-g}.} $\dom(A)=\R^2\times\R^+\times\R\subset\R^4=\dom(g)$. 

\emph{\cref{as:max-dis-c}.} The multivalued map $\A$ from \cref{eq:cal_A} is given by $\A: \R^3 \times \R \rightrightarrows \R^3 \times \R$ defined as
\begin{align}\label{eq:cal_A_RLC}
\A\left(V_C, I_1, I_2, I\right) &\triangleq \left(A\left(V_C, I_1, I_2, I\right), -V_C\right), \\
\dom(\A) &=\left\{ \left(V_C, I_1, I_2, I\right) \in \R^2\times\R^+\times \R \right\}. \notag
\end{align}
We introduce the
inner product $\langle\cdot,\cdot\rangle_Q:\R^3\times \R^3\to\R$,
\begin{equation}\label{eq:inn_RLC}
\langle x^\prime, x\rangle_{Q}\triangleq x^TQx^\prime, \qquad Q\triangleq\operatorname{diag}(C,L_1,L_2),
\end{equation}
where $\rm{diag}$ denotes a diagonal matrix, and a decomposition of $\A$ given by
\begin{equation}\label{eq:cal_A_dec}
\A(x,I)=(A_Lx
-A_M(x)+BI, -Cx), 
\end{equation}
where 
\begin{multline}\label{eq:A_dec_mat}
A_L\triangleq\bbm{0 & -\frac{1}{C} & 0 \\ \frac{1}{L_1} & -\frac{R}{L_1} & -\frac{R}{L_1} \\ 0 & -\frac{R}{L_2} & -\frac{R}{L_2}}, \ A_M(x)\triangleq\bbm{0 \\ 0 \\ \frac{1}{L_2}N_{\R^+}(I_2)}, \\ B\triangleq\bbm{\frac{1}{C} & 0 & 0}^T, \quad C\triangleq\bbm{1 & 0 & 0}.
\end{multline}
Clearly, $-A_M$ is maximal dissipative, since $N_{\R^+}$ is maximal monotone, 
and $A_L$ is maximal dissipative, with
\begin{equation}\label{eq:R_diss}
\langle A_Lx, x\rangle_{Q}=-R(I_1+I_2)^2\leq0,
\end{equation}
which amounts to the total power dissipated in the resistor $R$ in \cref{fig:RLCD}. Moreover, $C=B^*$ with respect to the inner product  $\langle\cdot,\cdot\rangle_{Q}$. 
Combining these facts with \cite[Corollary~12.44]{RocWet09} (sum of maximal monotone operators) yields $\A$ maximal monotone. 

\emph{\cref{as:dist}.} By \cref{rem:dist}, it is sufficient to check that the steady-state input-output map $I^\star\in K\mapsto V_C^\star\in\R$ is strictly monotone. This is the case for \eqref{eq:equil_RLC} as $R>0$.

\emph{Step 2: Compactness and detectability.} \cref{as:compact} holds by \cref{rem:comp_finit_dim}. We proceed by checking detectability.

\emph{Strict output incremental passivity.} Using \cref{eq:output_RLC,eq:A_dec_mat,eq:R_diss} in the definition of $h$ from \cref{eq:h}, for any strong solutions $(x, I), (x^\prime, I^\prime)$ to \cref{eq:DI_RLC} and corresponding inputs,
\begin{equation*}\label{eq:h_RLC}
h\left(x,I;x^\prime,I^\prime\right)\triangleq R(I_1+I_2-I_1^\prime-I_2^\prime)^2+\langle f -f^\prime, I_2-I_2^\prime\rangle_\R,
\end{equation*}
where $f\in N_{\R^+}(I_2)$, $f^\prime\in N_{\R^+}(I_2^\prime$). Since $N_{\R^+}$ is monotone, 
$\langle f -f^\prime, I_2-I_2^\prime\rangle_\R\geq0$. Thus, $h\left(x,I;x^\prime,I^\prime\right)=0$ a.e.\ implies 
\begin{align}
&I_1+I_2=I_1^\prime+I_2^\prime  \quad \mbox{a.e.}, \label{eq:1} \\
&\langle f -f^\prime, I_2-I_2^\prime\rangle_\R=0  \quad \mbox{a.e.} \label{eq:2}
\end{align}
Using \cref{eq:1} in the third inclusion in \cref{eq:DI_RLC}, we get 
$$\dot{I}_2-\dot{I}_2^\prime\in-\tfrac{1}{L_2}\left(N_{\R^+}(I_2)-N_{\R^+}(I_2^\prime)\right),$$
and, using \cref{eq:2} above, we have
$$\langle \dot{I}_2 -\dot{I}_2^\prime, I_2-I_2^\prime\rangle_\R=0  \quad \mbox{a.e.}$$
Thus, either $\dot{I}_2=\dot{I}_2^\prime$ or $I_2=I_2^\prime$ a.e. Assume $\dot{I}_2=\dot{I}_2^\prime$ a.e. From \cref{eq:1}, this implies $\dot{I}_1=\dot{I}_1^\prime$ a.e., which, using the second equation in \cref{eq:DI_RLC}, together with \eqref{eq:1}, implies $V_C=V_C^\prime$ a.e. Similarly, assume $I_2=I_2^\prime$ a.e. Then, in particular, $\dot{I}_2=\dot{I}_2^\prime$ a.e. Arguing as before, $V_C=V_C^\prime$ a.e. Hence, \cref{eq:DI_RLC}--\cref{eq:output_RLC} is strictly output incrementally passive in the sense of \cref{def:SOIIP}.

\emph{All steady-state pairs are $h$-detectable.} Let $x^\star$ as in \cref{eq:equil_RLC} for some $I^\star\in K=[I_{\min},I_{\max}]$. From \cref{eq:h_RLC}, we have
\begin{equation*}
h\left(x,I^\star;x^\star,I^\star\right)= R(I_1+I_2-I_1^\star-I_2^\star)^2+\langle f -f^\star, I_2-I_2^\star\rangle_\R,
\end{equation*}
where $f\in N_{\R^+}(I_2)$, $f^\star\in N_{\R^+}(I_2^\star$). Arguing as above, using \cref{eq:equil_RLC}, $h\left(x,I^\star;x^\star,I^\star\right)=0$ a.e.\ implies 
\begin{align}
&I_1+I_2=I^\star  \quad \mbox{a.e.}, \label{eq:3} \\
&\langle f -f^\star, I_2\rangle_\R=0  \quad \mbox{a.e.} \label{eq:4}
\end{align}
Using \cref{eq:3} in the third inclusion in \cref{eq:DI_RLC}, we get 
\begin{equation}\label{eq:5}
\dot{I}_2\in-\frac{1}{L_2}\left(N_{\R^+}(I_2)+RI^\star\right) \quad \mbox{a.e.}
\end{equation}
From the above, $f\triangleq -L_2\dot{I}_2-RI^\star\in N_{\R^+}(I_2)$. Moreover, as $I_2^\star=0$, $N_{\R^+}(I_2^\star)=(-\infty,0]$. In particular, $f^\star\triangleq-RI^\star\in N_{\R^+}(I_2^\star)$. Using \cref{eq:4} with this choice of $f,f^\star$ yields
\begin{equation}\label{eq:6}
\langle \dot{I}_2,I_2\rangle_\R=0 \quad \mbox{a.e.}   
\end{equation}
Thus, either $\dot{I}_2=0$ or $I_2=0$ a.e. Assume $\dot{I}_2=0$. From \cref{eq:5}, this implies $RI^\star\in-N_{\R^+}(I_2)$. However, since $I^*>0$, this holds if and only if $I_2=0$. In turn, $I_2=0$ in \cref{eq:3} yields $I_1=I^\star$. Finally, from
the second equation in \cref{eq:DI_RLC}, $V_C= RI^\star$ a.e. Thus, $x=x^\star$. For the case $I_2=0$ it is sufficient to repeat this last line of argument. Hence, all steady-state pairs are $h$-detectable in the sense of \cref{as:OL-detec}.

\emph{Step 3: Conclusion and convergence of the output.} Clearly, any  $V_{\rm ref} \in [RI_{\min},RI_{\max}]$ is feasible in the sense of \cref{def:feasible}, since $K=[I_{\min},I_{\max}]$ and $I^\star\mapsto V_C^\star=RI^\star$. Thus, the proof concludes applying \cref{theo:strict-tracking}.
\end{proof}

\subsection{Impedance passive infinite-dimensional linear systems}\label{subsec:pass_l}
A large class of linear controlled partial differential equations can be formulated as abstract linear systems, through~\emph{system nodes}~\cite[Section~4]{TucWei14}. We show here how our results apply to this class of systems.
Let $X,Y,U$ be (real) Hilbert spaces. Consider a closed linear operator $S$ from $X \times U$ to $X \times Y$ with domain $\dom(S) \subset X \times Y$. We may then write $S$ as follows:
\begin{equation}
S \triangleq \begin{pmatrix} A\& B \\ C \& D \end{pmatrix},
\end{equation}
where $A \& B : \dom(S) \to X$, and $C\&D : \dom(S) \to Y$.


\begin{defi}[Linear system nodes]\label{def:node}
We say that the operator $S$ is a \emph{system node} on  $(U, X, Y)$ if the following hold.
\begin{enumerate}[label=(\roman*)]
    \item The operator $A : \dom(A) \subset X \to X$ defined by $Ax = A\&B(x, 0)$ for all $x \in \dom(A)$, where $\dom(A) \triangleq \{ x \in X : (x, 0) \in \dom(S) \}$, generates a $C_0$-semigroup $\{e^{tA}\}_{t\geq 0}$ of continuous linear operators on $X$.
    \item\label{it:B} Denoting by $X_{-1}$ the first \emph{extrapolation space} \cite{TucWei09book} associated with $\{e^{tA}\}_{t\geq 0}$, there exists a continuous linear operator $B : U \to X_{-1}$ such that $A\&B(x, u) = Ax + Bu$ for all $(x, u) \in \dom(S)$;
    \item $\dom(S) = \{(x, u) \in X \times U : Ax + Bu \in X \}$.
\end{enumerate}
\end{defi}
If $S$ is a system node, then $\dom(S)$ is dense in $X \times U$ and
$A\&B$ with $\dom(A\&B) = \dom(S)$ is a closed operator ~\cite[Lemma~4.7.3]{Sta05book}. Moreover, the \emph{control operator} $B$ is uniquely determined by $S$. Furthermore, we  define the \emph{observation operator}\footnote{As we will not consider convex subsets of the original state space $X$ here, we use the notation $C$ in accordance with standard linear systems terminology.}
$C$ by $Cx = C\&D(x, 0)$ for all $x \in \dom(A)$, so that $C$ is linear continuous from $\dom(A)$ into $Y$. 

In this setting,
the linear system governed by the equations
\begin{equation}
\label{eq:LTI-node}
\dot{x} = A\&B(x, u) = Ax + Bu, \quad y = C\&D(x, u).
\end{equation}
is a special case of \cref{eq:open-loop-DE}--\cref{eq:output}
if we let (with abuse in notation) $A(x, u) = A\&B(x, u) = Ax + Bu$ and $g(x, u) = C\&D(x, u)$. 
By \cite[Proposition 4.3]{TucWei14}, for any $u \in \C^2(\R^+, U)$ and initial data $x_0 \in X$ satisfying $(x_0, u(0)) \in \dom(S)$, there exists a unique \emph{classical solution} $(x, u, y)$ to \cref{eq:LTI-node} in the sense of \cite[Definition 4.2]{TucWei14}. For such solutions, $x \in \C^{1}(\R^+, X) \cap \C^{2}(\R^+, X_{-1})$, $(x(t), u(t)) \in \dom(S)$ for all $t \geq 0$ and $y \in \C(\R^+, Y)$. In particular, $x$ is  also \emph{strong solution} to \cref{eq:LTI-node} with input $u$ in the sense of \cref{def:open-loop-DE}.


\begin{defi}[Impedance passive system node]
We say that the system node $S$ is \emph{impedance passive} if
its input and output spaces coincide, i.e., $U = Y$, and if, for all classical solutions $(x, u, y)$ to \cref{eq:LTI-node},
\begin{equation}
\frac{1}{2} \dt{} \|x(t)\|^2_X \leq \langle u(t), y(t) \rangle_{Y}, \quad t > 0.
\end{equation}
\end{defi}

It is not difficult 
to show that the system node $S$ is impedance passive if and only if the associated open-loop system \cref{eq:LTI-node} is incrementally impedance passive in the sense of \cref{def:IIP}. An important characterisation of impedance passive system nodes is given by \cite[Theorem 6.2]{TucWei14}: $S$ is impedance passive \emph{if and only if} the operator
\begin{equation}
\begin{pmatrix}
A\& B \\
- C \& D
\end{pmatrix} : \dom(S) \to X \times Y
\end{equation}
is maximal dissipative. In the notation of \cref{sec:2}, this operator is precisely the map $\A$ from \cref{eq:cal_A} associated with the abstract system \cref{eq:open-loop-DE}--\cref{eq:output}. Thus, the system node $S$ is impedance passive if and only if \cref{as:max-dis-c} is satisfied. 

\begin{theo}
\label{th:si-node} Suppose that the following hold.
\begin{enumerate}[label=(\roman*)]
\item  \emph{(System node property.)} The operator $S$ is a system node on $(Y, X, Y)$.
\item \emph{(Strict output passivity.)}\label{it:SIP-node} There exists $k > 0$ such that, for all classical solutions $(x, u ,y)$ to \cref{eq:LTI-node} and for all $t>0$,
\begin{equation}
\label{eq:SysNodeStrictIP}
\frac{1}{2}\dt{}\|x(t)\|^2_X + k \|y(t)\|^2_Y \leq \langle u(t), y(t) \rangle_Y. 
\end{equation}
\item \emph{(Infinite-time approximate observability.)}\label{it:obs-node} Let $x$ 
be a classical solution to \cref{eq:LTI-node} corresponding to the zero input $u=0$. 
If $y(t) = C\&D(x(t), 0) = Cx(t) = 0$ for all $t \geq 0$ then $x(0) = 0$. 
\item \emph{(Compactness.)}\label{it:comp-node} The output space $Y$ is finite-dimensional and the semigroup generator $A$ has compact resolvent.
\item \emph{(Injectivity.)}\label{it:B-inj} The control operator $B$ has kernel $\{0\}$.
\end{enumerate}
Let $K$ be a closed convex subset of $Y$ with nonempty interior and let $r \in Y$ be a feasible reference in the sense of \cref{def:feasible}. Then, we have the following conclusions. 
\begin{enumerate}[label=(\alph*)]
\item \emph{(Closed-loop well-posedness.)} The system \cref{eq:LTI-node}
in closed loop with the projected integrator
\begin{equation}
\label{eq:PI-node}
\dot{z} \in r - y - N_K(z)
\end{equation}
gives rise to a continuous semigroup of (nonlinear) contractions on the closed convex set $X \times K$. 
\item \emph{(Asymptotic convergence.)} All generalized solutions $(x, z) \in \C(\R^+, X \times Y)$ to \cref{eq:LTI-node}--\cref{eq:PI-node} converge in $X \times Y$ to the corresponding steady-state pair $(x^\star, u^\star) \in \dom(S) \cap (X \times K)$. 
\item \emph{(Output tracking.}) For initial data $(x_0, z_0) \in \dom(S) \cap (X \times K)$, the output $y = C\&D(x, z)$ of the corresponding strong solution $(x, z)$ to \cref{eq:LTI-node}--\cref{eq:PI-node} converges to the reference $r$ in the following sense:
\begin{multline}
\label{eq:node-square-conv}
\int_T^{+\infty} \|y(t) - r\|^2_Y \, \d t
\\ \leq \frac{1}{2k}\{\|x(T) - x^\star\|^2_X + \|z(T) - u^\star\|^2_Y\} \to 0, 
\end{multline}
as $T \to + \infty$.
What is more, the map $(x_0, z_0) \mapsto y$ uniquely extends to a globally Lipschitz continuous map from $X \times K$ into $L^2(0, +\infty; Y)$, which means that \cref{eq:node-square-conv} holds  for generalized solutions as well.
\end{enumerate}
\end{theo}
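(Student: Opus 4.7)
The plan is to reduce the claim to the abstract framework of \cref{sec:2,sec:PI,sec:4}: verify that the system-node hypotheses imply \cref{as:A,as:compa-g,as:max-dis-c,as:dist,as:compact}, invoke \cref{cor:cl_sem_gen} for conclusion~(a), apply \cref{theo:strict-tracking} for conclusion~(b), and finally combine the closed-loop incremental energy inequality from \cref{lem:cl-inc-en} with the strict output passivity bound to obtain conclusion~(c).

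\textbf{Verification of the structural assumptions.} \cref{as:compa-g} is automatic; \cref{as:max-dis-c} is the characterisation of impedance passive system nodes via maximal dissipativity of $\A$ (strict output passivity clearly implies ordinary impedance passivity), together with density of $\dom(S)$ in $X\times U$; and \cref{as:compact} is exactly hypothesis~(iv). For \cref{as:A}, injectivity of $B$ (hypothesis~(v)) yields part~\cref{it:u-inject}. For part~\cref{it:A-uss}, the strict output passivity inequality applied to a constant classical trajectory $x\equiv x_0\in\ker A$ with zero input forces $Cx_0=0$, and hypothesis~(iii) then gives $x_0=0$; combined with dissipativity and compactness of the resolvent of $A$ (so that $\sigma(A)$ is discrete), this places $\sigma(A)\subset\overline{\mathbb{C}_-}\setminus\{0\}$, hence $0\in\rho(A)$. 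Consequently $A$ extends to an isomorphism from $X$ onto $X_{-1}$, and $x^\star\triangleq -A^{-1}Bu$ is the unique steady state associated with any $u\in K$. \cref{as:dist} reduces via \cref{rem:dist} to strict monotonicity of the map $u^\star\mapsto y^\star$, which follows from the incremental form of strict output passivity applied to pairs of constant classical solutions: the same reasoning shows that the Hermitian part of the transfer function of the node is bounded below, forcing injectivity at zero.

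\textbf{Conclusions (a) and (b).} Well-posedness is precisely \cref{cor:cl_sem_gen}. For (b), by \cref{theo:strict-tracking} it suffices to show that every steady-state pair $(x^\star,u^\star)$ is $h$-detectable. If $x$ is a strong open-loop solution with constant input $u^\star$ and $h(x,u^\star;x^\star,u^\star)=0$ a.e., then the bound $h\geq k\|y-y^\star\|_Y^2$ implicit in strict output passivity (cf.\ \cref{rem:SOIIP}, applied in its incremental form thanks to linearity) forces $y\equiv y^\star$; equivalently, $C(x-x^\star)\equiv 0$. Since $w\triangleq x-x^\star$ solves the autonomous equation $\dot w=Aw$, $Cw$ is continuous and vanishes everywhere on $\R^+$, so hypothesis~(iii) yields $w(0)=0$ and hence $x\equiv x^\star$.

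\textbf{Conclusion (c).} For a strong solution $(x,z)$ to the closed loop, combining \cref{eq:cl-inc-en} with $h(x,z;x^\star,u^\star)\geq k\|y-r\|_Y^2$ gives
\begin{equation*}
\tfrac{1}{2}\dt{}\{\|x-x^\star\|_X^2+\|z-u^\star\|_Y^2\} + k\|y-r\|_Y^2 \leq 0 \quad \text{a.e.}
\end{equation*}
Integrating on $[T,+\infty)$ and using the convergence from~(b) to cancel the boundary term at infinity yields \cref{eq:node-square-conv}. Applying exactly the same estimate to the difference of two strong solutions $(x_1,z_1)$ and $(x_2,z_2)$ shows that $(x_0,z_0)\mapsto y-r$ is $(2k)^{-1/2}$-Lipschitz from $\dom(S)\cap(X\times K)$ (with the norm of $X\times Y$) into $L^2(0,+\infty;Y)$; since the domain is dense in $X\times K$ and the target space is complete, the map extends uniquely and continuously, as required. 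The main technical hurdle is the spectral argument establishing $0\in\rho(A)$ and the resulting existence of steady states for every admissible input; once this is in place, conclusions~(a)--(c) unfold essentially mechanically from the abstract theory of \cref{sec:PI,sec:4}.
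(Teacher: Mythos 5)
Your overall route coincides with the paper's: verify the standing assumptions of \cref{sec:2}, obtain well-posedness from \cref{cor:cl_sem_gen}, deduce (b) from \cref{theo:strict-tracking} after checking strict output incremental passivity and $h$-detectability of steady states (which, by linearity, is exactly hypothesis \cref{it:obs-node}), and get (c) by combining \cref{eq:cl-inc-en} with $h \geq k\|y-r\|_Y^2$, integrating, and extending by density. The spectral argument showing $0 \in \rho(A)$ is also the paper's.

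There is, however, a genuine gap in your verification of \cref{as:dist}. Applying incremental strict output passivity to two constant steady-state trajectories only yields $\langle u^\star - u^\dagger, y^\star - y^\dagger\rangle_Y \geq k\|y^\star - y^\dagger\|_Y^2$, i.e., writing $v = u^\star - u^\dagger$ and $G(0)v = C\&D(-A^{-1}Bv, v)$ for the steady-state input--output map, the estimate $\langle v, G(0)v\rangle_Y \geq k\|G(0)v\|_Y^2$. This is \emph{not} a lower bound on the Hermitian part of $G(0)$ and it does not force injectivity at zero: it only shows that $h(x^\star,u^\star;x^\dagger,u^\dagger)=0$ implies $y^\star = y^\dagger$, leaving open the possibility $x^\star \neq x^\dagger$. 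The paper, at this point, explicitly invokes the observability hypothesis \cref{it:obs-node} \emph{in addition} to strict output passivity, whereas your argument never uses \cref{it:obs-node} here; note moreover that \cref{it:obs-node} cannot be applied verbatim, since the difference $w = x^\star - x^\dagger$ is a constant classical trajectory driven by the constant, generally nonzero, input $v$ (unlike in your $h$-detectability step, where the input difference vanishes and the argument is correct). As written, your proof of strict monotonicity of the steady-state input--output map is unsupported, and since \cref{as:dist} underpins both the uniqueness of the steady-state pair attached to a feasible reference (\cref{def:feasible}) and \cref{coro:uni-CL}, which is used inside \cref{theo:strict-tracking}, this step must be repaired before (b) and (c) follow. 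A secondary, minor gloss: \cref{as:compact} is not literally hypothesis \cref{it:comp-node}; one still needs the short argument given in the paper that $(\lambda \id - A)^{-1}B$ has finite-dimensional range and $(\lambda \id - A)^{-1}$ is compact, so that $(f,u) \mapsto (\lambda \id - A)^{-1}(f + Bu)$ is compact in the sense required by \cref{as:compact}.
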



\begin{proof}
The proof is divided into several steps.

\emph{Step 1: Structural assumptions.} The strict output passivity property \cref{it:SIP-node} in the theorem  implies impedance passivity of the system node $S$. By linearity and \cref{rem:SOIIP}, it is also equivalent to strict output incremental passivity as formulated in \cref{def:SOIIP}. 
We begin by proving that \cref{as:A} 
holds. To do so, we will show that $0$ lies in the resolvent set of the semigroup generator $A$, that is, $A^{-1}$ exists as a continuous linear operator on $X$. Assumption \cref{it:comp-node} of the theorem states that $A$ has compact resolvent; as a result, the spectrum of $A$ is made of \emph{eigenvalues} only. By contradiction, if $0$ is a spectral value (hence an eigenvalue), there must exist $x_0 \in \dom(A) \setminus \{0\}$ such that $Ax_0 = 0$ and, letting $x(t) \triangleq e^{tA}x_0$, we have $x(t) = x_0$ for all $t \geq 0$. But the semigroup orbit $x$ is also a classical solution to \cref{eq:LTI-node} with $u = 0$. By strict output passivity \cref{eq:PI-node}, this implies $y(t) = C\&D(x(t), 0) = Cx(t)= 0$ for all $t\geq 0$, which, in turn, forces $x(0) = x_0 = 0$ due to the observability condition \cref{it:obs-node}. Now, with $A^{-1}$ available and condition \cref{it:B-inj}, it is straightforward to check that \cref{as:A} is valid: steady states $(x^\star, u^\star) \in \dom(S)$ exist for all $u^\star \in K$ and are uniquely defined by $x^\star = - A^{-1}Bu^\star$. In particular, it makes sense to speak of feasible references $r \in Y$. In addition, using again observability and strict output passivity, we readily check that \cref{as:dist} holds. At this point, we have all the structural \cref{as:A,as:compa-g,as:max-dis-c,as:dist} (see \cref{table:as}) as well as strict output incremental passivity (\cref{def:SOIIP}). 

\emph{Step 2: Compactness and detectability.} In order to apply \cref{theo:strict-tracking}, we still need the \emph{ad-hoc} compactness properties of \cref{as:compact} along with $h$-detectability of all steady-state pairs  (\cref{as:OL-detec}). By linearity, the latter is exactly the observability property \cref{it:obs-node} in the hypotheses of \cref{th:si-node}. 
Let us check \cref{as:compact}. First, $Y$ is assumed to be finite-dimensional. Second, fix some arbitrary $\lambda > 0$. Observe that impedance passivity of the system node $S$
implies that $\{e^{tA}\}_{t \geq 0}$ is a contraction semigroup and, in particular, $\lambda$ lies in the resolvent set of $A$. Now,
let $\{(f_n, u_n)\}_{n \in \N}$ be a \emph{bounded} sequence of $X \times Y$.
The unique solution $x_n \in X$ to $\lambda x_n - A x_n = Bu_n + f_n$ is 
$x_n = (\lambda \id - A)^{-1}Bu_n + (\lambda \id - A)^{-1}f_n$. Since $Y$ is finite-dimensional, the range of $(\lambda \id - A)^{-1}B$ is finite-dimensional as well; thus, since the sequence $(\lambda \id - A)^{-1}Bu_n$ is bounded in $X$, it possesses a subsequential limit $w$ in $X$. Moreover, by hypothesis the operator $(\lambda \id - A)^{-1}$ is compact on $X$, so that $x_n$ remains bounded in $X$ and, up to a subsequence, converges in $X$ as $n \to + \infty$ to $w + v$ where $v$ is some element of $X$.

\emph{Step 3: Conclusion and convergence of the output.} Let $r \in Y$ be a feasible reference.
We can apply \cref{theo:strict-tracking} and obtain the claimed convergence result for generalized solutions to \cref{eq:LTI-node}--\cref{eq:PI-node}. Furthermore, recalling the notation $\A_r$ from \cref{eq:cal_A_r} for the generator of the (nonlinear) closed-loop semigroup $\sg{\S^r}$, we have $\dom(\A_r) = \dom(S) \cap (X \times K)$, and for initial data $(x_0, z_0) \in  \dom(\A_r)$, the corresponding strong solution $(x, z)$ to \cref{eq:LTI-node}--\cref{eq:PI-node} satisfies
\begin{equation}
\label{eq:ineq-node}
\frac{1}{2} \dt{} \{ \|x - x^\star\|^2_X + \|z - u^\star\|^2_Y \} + k \|y - r\|^2_Y \leq 0 \quad \mbox{a.e.},
\end{equation}
where $y = C\&D(x,z)$.
Since $(x, z)$ is absolutely continuous in $X \times Y$, we may integrate \cref{eq:ineq-node} over $(0, T)$, $T > 0$, and deduce \cref{eq:node-square-conv}.
With that and Cauchy sequences, we may show that the map $(x_0, z_0) \in \dom(\A_r) \mapsto y = C\&D(x, z)$, where $(x(t), z(t)) \triangleq \S_t^r(x_0, z_0)$ for all $t\geq 0$, uniquely extends by density and continuity to a globally Lipschitz continuous map from $X \times K$ into $L^2(0, +\infty; Y)$. In that regard, \cref{eq:node-square-conv} makes sense for generalized closed-loop solutions as well.
\end{proof}

\begin{rem}[Strict output passivity] Property \cref{it:SIP-node} in \cref{th:si-node} typically arises when applying negative output feedback to an impedance passive linear system; see, e.g., \cite{WeiTuc03,CurWei19}.
Indeed, if $S_0$ is an impedance passive system node on $(Y,X,Y)$ and $ k>0$, then it follows from~\cite[Theorem~4.2]{CurWei19} that $-k\id$ is a system node admissible feedback operator for $S_0$ in the sense of~\cite[Definition~7.4.2]{Sta05book}. In particular, there exists a system node $S$ on $(Y,X,Y)$ such that $(x,u) \in \dom(S)$ and $(z,y)=S (x,u)$ if and only if $(x,u-k y) \in \dom(S_0)$ and $(z,y) = S_0(x,u-ky)$. In that setting,
it is easy to see that 
$S$ is impedance passive and that the estimate \cref{eq:SysNodeStrictIP} in \cref{it:SIP-node} holds.
\end{rem}




\subsection{A nonlinear parabolic partial differential equation}\label{subsec:PDE_ql}
Let $\Omega$ be a smooth bounded domain of $\R^d$, $d \geq 2$. Let $p \geq 2$ be an even integer. Consider the following parabolic equation, inspired by \cite[page~59]{Sho13}, with boundary control:
\begin{subequations}
\label{eq:IBVP}
\begin{align}
\label{eq:PDE}
&\dl{w}{t}
- \nabla \cdot (\vnabla{w})^{p-1} + w^{p-1} = 0
&&\mbox{in}~ \Omega, \\
\label{eq:dirichlet}
& (\vec{\nabla}w)^{p-1} \cdot \vec{n} = u &&\mbox{on}~\partial \Omega,
\end{align}
\end{subequations}
where taking the power $p-1$ for vectors is meant in a component-wise sense and $\vec{n} = (n_1, \dots, n_d)$ indicates the outward unit normal vector. We assume that the boundary control $u$ has the form
\begin{equation}
\label{eq:id-u}
u = \sum_{j=1}^m u^{(j)} b_j,
\end{equation}
where $b_1, \dots, b_m$ are $m$ orthonormal profile functions in $L^2(\partial \Omega)$. We define a collocated observation by
\begin{equation}
\label{eq:heat-output}
y = \left( \int_{\partial \Omega} w b_1 \, \d \sigma, \dots,  \int_{\partial \Omega} w b_m \, \d \sigma\right) \in \R^m.
\end{equation}
Although our input and output space is $\R^m$, in order to simplify notation, we identify $\R^m$ as the closed subspace $\operatorname{span}(b_1, \dots, b_m)$ of $L^2(\partial \Omega)$ via \cref{eq:id-u}. In this spirit, we denote by $\pi_b$ the orthogonal projector on $\operatorname{span}(b_1, \dots, b_m)$.

\para{Duality maps} The variational formulation of \cref{eq:IBVP} along with \emph{a priori} energy identities suggest
working with the Hilbert space $L^2(\Omega)$, which we identify with its topological dual $L^2(\Omega)'$, and the (reflexive) Banach space $W^{1, p}(\Omega)$ in the chain of continuous and dense embeddings $W^{1,p}(\Omega) \hookrightarrow L^2(\Omega) \hookrightarrow W^{1,p}(\Omega)'$; see
\cite[Section 5.2]{Bre11book} for more details on such chains and pivot duality.
Now, for $\lambda,\mu \geq 0$, define  duality maps $\Phi_{\lambda,\mu} : W^{1,p}(\Omega)  \to W^{1,p}(\Omega)'$ as follows:
\begin{multline}
\label{eq:phi-weak}
\langle \Phi_{\lambda,\mu}(w), \varphi\rangle \triangleq  \int_\Omega (\vnabla w)^{p-1} \cdot \vnabla \varphi \, \d x + \int_\Omega w^{p - 1} \varphi \, \d x \\ 
+ \lambda \int_\Omega w \varphi \, \d x + \mu \int_{\partial \Omega} w \pi_b \varphi, \quad w, \varphi \in W^{1,p}(\Omega),
\end{multline}
where $\langle \cdot, \cdot \rangle$ denotes the duality pairing between $W^{1,p}(\Omega)$ and $W^{1,p}(\Omega)'$.
That $\Phi_{\lambda,\mu}$ is well-defined and maps into $W^{1,p}(\Omega)'$ can be verified using H\"older's inequality.
We also have 
\begin{multline}
\label{eq:incr-phi}
\langle \Phi_{\lambda,\mu}(w_1) - \Phi_{\lambda,\mu}(w_2), w_1 - w_2 \rangle  \\ = \int_\Omega ((\vnabla{w_1})^{p -1} - (\vnabla w_2)^{p-1})\cdot(\vnabla w_1 - \vnabla w_2) \, \d x  \\ + \int_\Omega(w_1^{p-1} - w_2^{p-1})(w_1 - w_2) \, \d x + \lambda \int_\Omega |w_1 - w_2|^2 \\ + \mu \int_{\partial \Omega} |\pi_b(w_1 - w_2)|^2 \, \d \sigma, \quad  w_1, w_2 \in W^{1,p}(\Omega).
\end{multline}
The next proposition summarizes key properties of $\Phi_{\lambda,\mu}$.
\begin{prop}[Duality maps]
\label{prop:dual-map}
For any $\lambda, \mu \geq 0$, we have:
\begin{enumerate}[label=(\roman*),series=phi]
        \item \label{it:phi-mono} $\Phi_{\lambda,\mu}$ is \emph{monotone}, i.e., 
$
\langle \Phi_{\lambda,\mu}(w_1) - \Phi_{\lambda,\mu}(w_2), w_1 - w_2 \rangle \geq 0$ for all $w_1, w_2 \in W^{1,p}(\Omega)$.
    \item \label{it:phi-bounded} $\Phi_{\lambda,\mu}$ is \emph{bounded}, i.e., it maps bounded sets of $W^{1,p}(\Omega)$ into bounded sets of $W^{1,p}(\Omega)'$;
    \item \label{it:phi-hemi} $\Phi_{\lambda,\mu}$ is \emph{hemicontinuous}, i.e., for fixed $w_1, w_2 \in W^{1,p}(\Omega)$,  $ 
t \in \R \mapsto\langle \Phi_{\lambda,\mu}(w_1 + t w_2), w_2\rangle$ is continuous.
\end{enumerate}
Furthermore, $\Phi_{\lambda,\mu}$ possesses a continuous inverse $\Phi_{\lambda,\mu}^{-1} : W^{1,p}(\Omega)' \to W^{1,p}(\Omega)$.
\end{prop}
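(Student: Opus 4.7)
The strategy is to verify the four structural properties in turn and then invoke the Browder--Minty theorem, with the continuity of the inverse ultimately coming from a strong monotonicity estimate. Monotonicity (i) is already essentially contained in \cref{eq:incr-phi}: since $p$ is even, the scalar function $s \mapsto s^{p-1}$ is strictly increasing on $\R$, so $(a^{p-1} - b^{p-1})(a - b) \geq 0$ for all $a, b \in \R$, with equality iff $a = b$. Applying this pointwise to $w_1 - w_2$ and componentwise to $\vnabla w_1 - \vnabla w_2$, and noting the non-negativity of the $\lambda$- and $\mu$-terms, gives (i) together with strict monotonicity (the sum in \cref{eq:incr-phi} vanishes only when $w_1 = w_2$ almost everywhere on $\Omega$).

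For (ii), I would estimate each term of \cref{eq:phi-weak} separately. H\"older's inequality with exponents $p/(p-1)$ and $p$ handles the two $p$-power volume integrals, yielding bounds like $|\int_\Omega (\vnabla w)^{p-1} \cdot \vnabla \varphi \, \d x| \leq C\|w\|_{W^{1,p}}^{p-1}\|\varphi\|_{W^{1,p}}$; the $\lambda$-term is controlled by the continuous embedding $W^{1,p}(\Omega) \hookrightarrow L^2(\Omega)$ (valid since $p \geq 2$ and $\Omega$ is bounded), and the $\mu$-term by the Sobolev trace inequality $W^{1,p}(\Omega) \hookrightarrow L^2(\partial \Omega)$ (noting also that $\pi_b \varphi$ lies in a fixed finite-dimensional subspace). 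Altogether, $\|\Phi_{\lambda,\mu}(w)\|_{W^{1,p}(\Omega)'} \leq C(\|w\|_{W^{1,p}}^{p-1} + \|w\|_{W^{1,p}})$. For hemicontinuity (iii), I would fix $w_1, w_2 \in W^{1,p}(\Omega)$, expand $\langle \Phi_{\lambda,\mu}(w_1 + tw_2), w_2\rangle$ as a sum of four integrals, and apply dominated convergence to each: the integrands depend continuously on $t$ pointwise, and on bounded intervals of $t$ are dominated by integrable functions built via the crude bound $|w_1 + tw_2|^{p-1} \leq C(|w_1|^{p-1} + |w_2|^{p-1})$ and H\"older.

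With (i)--(iii) in place on the reflexive Banach space $W^{1,p}(\Omega)$, bijectivity of $\Phi_{\lambda,\mu}$ follows from the Browder--Minty theorem once coercivity is checked. Setting $\varphi = w$ in \cref{eq:phi-weak} gives $\langle \Phi_{\lambda,\mu}(w), w\rangle = \int_\Omega \sum_i |\partial_i w|^p \, \d x + \int_\Omega |w|^p \, \d x + \lambda \|w\|_{L^2(\Omega)}^2 + \mu \|\pi_b w\|_{L^2(\partial \Omega)}^2$; since the $\ell^p$ and $\ell^2$ norms on $\R^d$ are equivalent, the first two terms dominate a constant multiple of $\|w\|_{W^{1,p}}^p$, and the remaining terms are non-negative, so $\langle \Phi_{\lambda,\mu}(w), w\rangle/\|w\|_{W^{1,p}}\to+\infty$ as $\|w\|_{W^{1,p}}\to+\infty$. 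Browder--Minty then yields surjectivity, and strict monotonicity from the first step yields injectivity; hence $\Phi_{\lambda,\mu}^{-1}$ exists as a single-valued map from $W^{1,p}(\Omega)'$ onto $W^{1,p}(\Omega)$.

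The main obstacle, continuity of the inverse, I would obtain from a quantitative sharpening of (i). For $p$ even and $a, b \in \R$, the elementary inequality $(a^{p-1} - b^{p-1})(a - b) \geq 2^{2-p}|a - b|^p$ holds; it can be proved from the representation $a^{p-1} - b^{p-1} = (p-1)(a-b)\int_0^1 (b + s(a-b))^{p-2}\, \d s$ and a direct lower bound on the integral (the extremal case being $a = -b$). Summing componentwise in \cref{eq:incr-phi} upgrades (i) to the strong monotonicity estimate
\begin{equation*}
\langle \Phi_{\lambda,\mu}(w_1) - \Phi_{\lambda,\mu}(w_2), w_1 - w_2\rangle \geq c \|w_1 - w_2\|_{W^{1,p}(\Omega)}^p
\end{equation*}
with some $c > 0$ independent of $w_1, w_2$. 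Combining with the duality pairing bound on the left-hand side then gives
\begin{equation*}
\|w_1 - w_2\|_{W^{1,p}(\Omega)}^{p-1} \leq c^{-1}\|\Phi_{\lambda,\mu}(w_1) - \Phi_{\lambda,\mu}(w_2)\|_{W^{1,p}(\Omega)'},
\end{equation*}
exhibiting $\Phi_{\lambda,\mu}^{-1}$ as $1/(p-1)$-H\"older continuous, and in particular continuous, on $W^{1,p}(\Omega)'$.
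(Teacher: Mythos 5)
Your proof is correct, and for items (i)--(iii) and the existence of the inverse it follows essentially the same path as the paper: monotonicity read off from \cref{eq:incr-phi} using that $s \mapsto s^{p-1}$ is increasing for $p$ even, boundedness and hemicontinuity via H\"older and trace continuity, and then surjectivity from coercivity of $\langle \Phi_{\lambda,\mu}(w), w\rangle$ (the paper invokes the Browder--Minty-type result of Showalter, which is what you call Browder--Minty). Where you genuinely diverge is the continuity of $\Phi_{\lambda,\mu}^{-1}$. The paper proves it by a soft compactness argument: given $L_n \to L$, it extracts a weakly convergent subsequence of $w_n = \Phi_{\lambda,\mu}^{-1}(L_n)$, uses maximal monotonicity of $\Phi_{\lambda,\mu}$ together with a closedness-plus-strong-convergence lemma of Barbu applied to $\langle \Phi_{\lambda,\mu}(w_n)-\Phi_{\lambda,\mu}(w_m), w_n - w_m\rangle \to 0$, and then upgrades subsequential to full convergence. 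You instead exploit the specific power nonlinearity: the scalar inequality $(a^{p-1}-b^{p-1})(a-b) \geq 2^{2-p}|a-b|^p$ (your integral-representation proof of it is sound, with equality in the centered case $a=-b$), applied componentwise in \cref{eq:incr-phi}, gives the uniform strong monotonicity bound $\langle \Phi_{\lambda,\mu}(w_1)-\Phi_{\lambda,\mu}(w_2), w_1-w_2\rangle \geq c\|w_1-w_2\|_{W^{1,p}(\Omega)}^p$, whence $\|w_1-w_2\|_{W^{1,p}(\Omega)}^{p-1} \leq c^{-1}\|\Phi_{\lambda,\mu}(w_1)-\Phi_{\lambda,\mu}(w_2)\|_{W^{1,p}(\Omega)'}$ after pairing and dividing. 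This is more elementary (no weak-compactness or maximal-monotonicity machinery), it subsumes strict monotonicity, injectivity and coercivity in one estimate, and it yields the quantitatively stronger conclusion that $\Phi_{\lambda,\mu}^{-1}$ is $1/(p-1)$-H\"older continuous, uniformly in $\lambda, \mu \geq 0$; the trade-off is that it is tied to the explicit monomial structure, whereas the paper's argument works verbatim for any bounded, hemicontinuous, coercive, strictly monotone map.
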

\begin{rem}
In the statement of \cref{prop:dual-map}, $\lambda$ denotes the map $w \in W^{1,p}(\Omega) \mapsto \lambda w \in W^{1,p}(\Omega)'$. Recall that, in view of the embedding chain $W^{1,p}(\Omega) \hookrightarrow L^2(\Omega) \hookrightarrow W^{1,p}(\Omega)'$, $\langle w, \varphi\rangle = \int_\Omega w \varphi \, \d x$ for all $w, \varphi \in W^{1,p}(\Omega)$.
\end{rem}
\begin{proof}[Proof of \cref{prop:dual-map}] The proof is divided into steps.

\emph{Step 1: Verification of \cref{it:phi-mono,it:phi-bounded,it:phi-hemi}.}
As $p - 1$ is odd, monotonicity \cref{it:phi-mono} is an immediate consequence of \cref{eq:incr-phi}. Straightforward arguments using H\"older's inequality and trace continuity from $W^{1,p}(\Omega)$ into $L^2(\partial \Omega)$ lead to the boundedness property \cref{it:phi-bounded} and hemicontinuity \cref{it:phi-hemi}.

\emph{Step 2: Existence of the inverse.}
The space $W^{1,p}(\Omega)$ is separable. Thus, by \cite[Lemma 2.1 and Theorem 2.1]{Sho13}, properties \cref{it:phi-bounded,it:phi-hemi,it:phi-mono} imply the following: 
for any linear form $L \in W^{1,p}(\Omega)'$, if there exists $\rho > 0$ such that $\langle \Phi_{\lambda,\mu}(w), w\rangle > \langle L, w \rangle$ whenever $\|w\|_{W^{1, p}(\Omega)} > \rho$, then 
$\Phi_{\lambda,\mu}(w) = L$ possesses a solution. Here, for all $w \in W^{1,p}(\Omega)$,
\begin{multline}
\label{eq:phi-w-w}
\langle \Phi_{\lambda,\mu}(w), w\rangle =   \sum_{i =1}^{d}\int_\Omega \left | \dl{w}{x_i} \right |^{p} \, \d x + \int_\Omega |w|^p \, \d x \\
+\lambda \int_\Omega |w|^2 \, \d x + \mu \int_{\partial \Omega} |\pi_bw|^2 \, \d \sigma, 
\end{multline}
and $|\langle L, w \rangle| \leq \|L\|_{W^{1,p}(\Omega)'} \|w\|_{W^{1,p}(\Omega)}$. Since $p \geq 2$, 
\begin{equation}
\label{eq:phi-w-L}
\langle \Phi_{\lambda,\mu}(w), w\rangle - \langle L, w \rangle \to + \infty.
\end{equation}
as $\|w\|_{W^{1,p}(\Omega)} \to + \infty$.
Therefore, the equation $\Phi_{\lambda,\mu} = L$ possesses a solution for any $L \in W^{1,p}(\Omega)'$. Since $p-1$ is odd,  \cref{eq:incr-phi} yields uniqueness.
Thus,
$\Phi_{\lambda,\mu}^{-1}$ is well-defined. 

\emph{Step 3: Continuity of the inverse.} 
Let $L \in W^{1,p}(\Omega)'$ and $\{L_n\}_{n\in \N} \subset W^{1,p}(\Omega)'$ such that $L_n \to L$ in $W^{1,p}(\Omega)'$ as $n \to + \infty$. Set $w \triangleq \Phi_{\lambda,\mu}^{-1}(L)$ and $w_n \triangleq \Phi_{\lambda,\mu}^{-1}(L_n)$. 
We first prove that $w_n \to w$ in $W^{1,p}(\Omega)$ for \emph{some} subsequence and then check that the convergence has to hold for \emph{any} subsequence.
In view of \cref{eq:phi-w-w,eq:phi-w-L}, pairing $L_n$ and $w_n$ reveals that $\{w_n\}_{n \in \N}$ is bounded in $W^{1,p}(\Omega)$, which we recall is a reflexive Banach space. As as result, there exists a subsequence $\{w_{\theta(n)}\}_{n\in \N}$ and $\tilde{w} \in W^{1,p}(\Omega)$ such that
\begin{equation}
w_{\theta(n)} \rightharpoonup \tilde{w} \quad\mbox{in}~W^{1,p}(\Omega)~\mbox{weakly}, \quad n \to + \infty.
\end{equation}
Furthermore, because the elements $\Phi_{\lambda,\mu}(w_n)$ converge (strongly) in $W^{1,p}(\Omega)'$, they constitute a Cauchy sequence in $W^{1,p}(\Omega)'$; also, the $w_n - w_m$ are bounded in $W^{1,p}(\Omega)$ uniformly in $m, n \in \N$. As a result, when $m,n \to + \infty$,
\begin{equation}
\langle \Phi_{\lambda,\mu}(w_n) - \Phi_{\lambda,\mu}(w_m), w_n - w_m \rangle \to 0.
\end{equation}
In fact, $\Phi_{\lambda,\mu}$ is \emph{maximal} monotone as a map $W^{1,p}(\Omega) \rightrightarrows W^{1,p}(\Omega)'$; see \cite[Chapter 2]{Bar76book}. Therefore,
We may apply \cite[Lemma 1.3, Chapter 2]{Bar76book} and obtain $\Phi_{\lambda,\mu}(\tilde{w}) = f$ (which implies $\tilde{w} = w$) together with $w_{\theta(n)} \to w$ (strongly) in $W^{1,p}(\Omega)$ as $n \to + \infty$.
Let us finally prove that $w_n \to w$ in $W^{1,p}(\omega)$ as $n \to + \infty$, that is, the convergence holds regardless of subsequences. If not, there exist $\eps > 0$ and a subsequence $\{w_{\psi(n)}\}_{n\in \N}$ such that
\begin{equation}
\label{eq:cont-seq}
\|w_{\psi(n)} - w \|_{W^{1,p}(\Omega)} \geq \eps, \quad n \in \N.
\end{equation}
However, extracting again a subsequence,the previous arguments lead to $w_{\psi(n)} \to w$ in $W^{1,p}(\Omega)$ (strongly) as $n \to + \infty$; passing to the limit in \cref{eq:cont-seq} then gives a contradiction. 
\end{proof}

\para{Control system} In what follows, we define the map $A$ associated with \cref{eq:IBVP} as a control system with state space $L^2(\Omega)$ and input space $\R^m \simeq \operatorname{span}(b_1, \dots, b_m)$. 
Let $N : L^2(\partial \Omega) \to W^{1,p}(\Omega)'$ (as in ``Neumann'') be the linear operator
defined by, for $u \in L^2(\partial \Omega)$ and $\varphi \in W^{1,p}(\Omega)$,
\begin{equation}
\langle Nu, \varphi \rangle \triangleq \int_{\partial \Omega} u \varphi \, \d \sigma.
\end{equation}
Because the trace map $\varphi \to \varphi|_{\partial \Omega}$ is continuous from $W^{1,p}(\Omega)$ into $L^2(\partial \Omega)$, $N$ is well-defined and also bounded. In view of the variational formulation  of the boundary value problem \cref{eq:IBVP}, $Nu$ corresponds to the boundary condition $(\vnabla w)^{p-1} \cdot \vec{n} = u$ on $\partial \Omega$. Also, recall from trace theory that the space of all $\varphi|_{\partial \Omega}$ when $\varphi$ describes $W^{1,p}(\Omega)$ is exactly $W^{1-1/p,p}(\partial \Omega)$, which is dense in $L^2(\partial \Omega)$.
This shows that $N$ is injective.
Having in mind the pivot duality $W^{1,p}(\Omega) \hookrightarrow L^2(\Omega) \hookrightarrow W^{1,p}(\Omega)'$ and letting $\Phi \triangleq \Phi_{0,0}$, we can define a nonlinear map $A : L^2(\Omega) \times \R^m \to L^2(\Omega)$ as follows:
\begin{align}
A(w, u) &\triangleq - \Phi(w) + Nu, \\ 
\dom(A(\cdot, u))&=\{ w \in W^{1, p}(\Omega) : -\Phi(w) + Nu \in L^2(\Omega) \}. \notag
\end{align}
\begin{lemma}[Density]
\label{lem:comp-heat}
For all $u \in \R^{m}$, $\dom(A(\cdot, u))$ is dense in $W^{1,p}(\Omega)$. In particular, $\dom(A)$ is dense in $L^2(\Omega) \times \R^m$.
\end{lemma}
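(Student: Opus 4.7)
The plan is to approximate an arbitrary $w \in W^{1,p}(\Omega)$ in the $W^{1,p}$-norm by a resolvent-type sequence $w_n$ lying in $\dom(A(\cdot,u))$. For each $n \in \N$, I would set
\begin{equation*}
w_n \triangleq \Phi_{n, 0}^{-1}(nw + Nu),
\end{equation*}
which is well-defined by \cref{prop:dual-map} since $nw + Nu \in W^{1,p}(\Omega)'$ (using $W^{1,p}(\Omega) \hookrightarrow L^2(\Omega) \hookrightarrow W^{1,p}(\Omega)'$). The key observation is that by construction $\Phi(w_n) - Nu = n(w - w_n)$, and the right-hand side is in $L^2(\Omega)$ because $W^{1,p}(\Omega) \subset L^2(\Omega)$. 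Hence $w_n \in \dom(A(\cdot, u))$, and the whole problem reduces to showing $w_n \to w$ strongly in $W^{1,p}(\Omega)$.

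Next, I would pair the identity $\Phi(w_n) + nw_n = nw + Nu$ with $w_n - w$ and exploit monotonicity (\cref{prop:dual-map}\cref{it:phi-mono}) to obtain
\begin{equation*}
\langle \Phi(w_n) - \Phi(w), w_n - w\rangle + n\|w_n - w\|_{L^2}^2 \leq \langle Nu - \Phi(w), w_n - w\rangle.
\end{equation*}
The crucial ingredient here, and the main technical obstacle, is to produce a lower bound of the form $\langle \Phi(w_n) - \Phi(w), w_n - w\rangle \geq C\|w_n - w\|_{W^{1,p}(\Omega)}^p$. Because $p$ is an even integer, this follows from the elementary scalar inequality $(a^{p-1} - b^{p-1})(a-b) \geq c_p|a-b|^p$ (valid for $p \geq 2$), applied componentwise to both the gradient and the zero-order term in \cref{eq:incr-phi}, combined with the equivalence of norms on $\R^d$.

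Combined with the duality estimate $|\langle Nu - \Phi(w), w_n - w\rangle| \leq \|Nu - \Phi(w)\|_{W^{1,p}(\Omega)'} \|w_n - w\|_{W^{1,p}(\Omega)}$, the lower bound yields $\|w_n - w\|_{W^{1,p}(\Omega)}^{p-1} \leq C'$ after dividing, so the sequence $\{w_n - w\}$ is uniformly bounded in $W^{1,p}(\Omega)$. Plugging this back into the inequality above gives $n\|w_n - w\|_{L^2}^2$ uniformly bounded, hence $w_n \to w$ strongly in $L^2(\Omega)$. By reflexivity of $W^{1,p}(\Omega)$, every subsequence admits a weakly convergent sub-subsequence whose limit must coincide with $w$ by the $L^2$ convergence; therefore the whole sequence converges weakly, $w_n \rightharpoonup w$ in $W^{1,p}(\Omega)$. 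Applying the convexity bound one last time, with $\langle Nu - \Phi(w), w_n - w\rangle \to 0$ by weak convergence, yields $\|w_n - w\|_{W^{1,p}(\Omega)}^p \to 0$.

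Finally, the ``in particular'' statement is immediate: $W^{1,p}(\Omega) \hookrightarrow L^2(\Omega)$ densely (as $C_c^\infty(\Omega)$ lies in both), and $\dom(A(\cdot,u)) \times \{u\} \subset \dom(A)$ for every $u \in \R^m$, so combining the two density results gives density of $\dom(A)$ in $L^2(\Omega) \times \R^m$.
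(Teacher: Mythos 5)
Your proof is correct, but it takes a genuinely different route from the paper's. The paper's argument is a short perturbation argument: it sets $L \triangleq -\Phi(w) + Nu \in W^{1,p}(\Omega)'$, approximates $L$ by some $L_\eps \in L^2(\Omega)$ (using density of $L^2(\Omega)$ in $W^{1,p}(\Omega)'$), and takes $w_\eps \triangleq \Phi^{-1}(Nu - L_\eps)$, which belongs to $\dom(A(\cdot,u))$ by construction; closeness of $w_\eps$ to $w = \Phi^{-1}(Nu - L)$ is then obtained from the continuity of $\Phi^{-1}$ provided by \cref{prop:dual-map}. You instead use a Yosida/resolvent-type approximation $w_n \triangleq \Phi_{n,0}^{-1}(nw + Nu)$, which lies in $\dom(A(\cdot,u))$ for the structural reason $\Phi(w_n) - Nu = n(w - w_n) \in L^2(\Omega)$, and you establish $w_n \to w$ in $W^{1,p}(\Omega)$ from scratch via the strong monotonicity bound $\langle \Phi(w_1) - \Phi(w_2), w_1 - w_2\rangle \geq C\|w_1 - w_2\|_{W^{1,p}(\Omega)}^p$; this bound is indeed valid here, since $p$ is even and the scalar inequality $(a^{p-1} - b^{p-1})(a - b) \geq c_p |a-b|^p$, $p \geq 2$, applies componentwise to both terms in \cref{eq:incr-phi}. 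Your chain (boundedness, then $L^2$ convergence from the $n\|w_n - w\|_{L^2}^2$ term, then weak $W^{1,p}$ convergence by reflexivity, then strong convergence from the coercivity bound) is sound. What your route buys: it needs only the existence of $\Phi_{\lambda,0}^{-1}$, not any continuity of the inverse, so it sidesteps the quantitative continuity of the nonlinear map $\Phi^{-1}$ that the paper's one-line estimate implicitly relies on; it is also quantitative in spirit. What the paper's route buys: brevity, since everything is already packaged in \cref{prop:dual-map} and no extra pointwise coercivity inequality is needed. The ``in particular'' statement is handled the same way in both arguments, via density of $W^{1,p}(\Omega)$ in $L^2(\Omega)$.
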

\begin{proof}
Let $u \in \R^m$, $w \in W^{1,p}(\Omega)$ and $\eps > 0$. Let $L \triangleq -\Phi(w) + Nu \in W^{1,p}(\Omega)'$. Since $L^2(\Omega)$ is dense in $W^{1,p}(\Omega)'$, there exists $L_\eps \in L^2(\Omega)$ such that $\|L_\eps - L\|_{W^{1,p}(\Omega)'} \leq \eps$. Now, let $w_\eps \triangleq \Phi^{-1}(Nu - L_\eps)$. By definition, $w_\eps \in \dom(A(\cdot, u))$. Furthermore, since $\Phi^{-1}$ is linear continuous from $W^{1,p}(\Omega)'$ into $W^{1,p}(\Omega)$, there exists $K > 0$ (independent of $w$ and $\eps$) such that
$
\|w - w_\eps\|_{W^{1,p}(\Omega)} \leq K \|L_\eps - L\|_{W^{1,p}(\Omega)'} \leq K\eps.
$
\end{proof}

On the other hand, since $\dom(A) \subset W^{1,p}(\Omega) \times \R^m$, letting $g(w, u) = g(w) = y$ as in \cref{eq:heat-output}
defines a compatible output map in accordance with \cref{as:compa-g}.
The abstract formulation of the open loop \cref{eq:IBVP}--\cref{eq:heat-output} is then
\begin{equation}
\label{eq:abstract-heat}
\dot{w} = A(w, u), \quad y = g(w).
\end{equation}
Our prior analysis of the duality maps $\Phi_{\lambda,\mu}$ makes the verification of our standing assumptions for $A$ relatively straightforward. We will in fact prove some properties that are slightly stronger than what our framework requires.
\begin{prop}[Verification of \cref{as:max-dis-c}]
The nonlinear map $\A : \dom(\A) \to L^2(\Omega) \times \to L^2(\Omega) \times \R^m$, where 
\begin{align}
\A(w, u) &\triangleq (A(w, u), -g(w)), \\ \dom(\A) &= \dom(A), \notag
\end{align}
is maximal dissipative and densely defined on $L^2(\Omega) \times \R^{m}$.
\end{prop}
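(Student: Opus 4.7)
I would establish the claim by verifying separately: density of the domain, dissipativity, and the range condition, which combined with dissipativity is equivalent to maximal dissipativity.

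\emph{Density.} This is already in the statement of \cref{lem:comp-heat}: since $\dom(\A) = \dom(A)$, there is nothing more to do.

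\emph{Dissipativity.} Given $(w_1, u_1), (w_2, u_2) \in \dom(\A)$, write $A(w_i, u_i) = -\Phi(w_i) + Nu_i$, which belongs to $L^2(\Omega)$ by definition of the domain, so the $L^2(\Omega)$-inner product coincides with the duality pairing between $W^{1,p}(\Omega)'$ and $W^{1,p}(\Omega)$. This gives
\begin{multline*}
\langle A(w_1, u_1) - A(w_2, u_2), w_1 - w_2\rangle_{L^2(\Omega)} \\ = -\langle \Phi(w_1) - \Phi(w_2), w_1 - w_2\rangle \\ + \int_{\partial \Omega}(u_1 - u_2)(w_1 - w_2)\,\d \sigma.
\end{multline*}
Because $u_1 - u_2 \in \operatorname{span}(b_1, \dots, b_m)$, the boundary integral equals $\langle u_1 - u_2, g(w_1) - g(w_2)\rangle_{\R^m}$, which exactly cancels the contribution $\langle -g(w_1) + g(w_2), u_1 - u_2\rangle_{\R^m}$ from the output coordinate of $\A$. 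What remains is $-\langle \Phi(w_1) - \Phi(w_2), w_1 - w_2\rangle \leq 0$ by item \cref{it:phi-mono} of \cref{prop:dual-map}, which proves dissipativity.

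\emph{Range condition, and the main obstacle.} To establish maximality it suffices to show that $\lambda\id - \A$ is surjective for some $\lambda > 0$. Fix such a $\lambda$ and $(f, r) \in L^2(\Omega) \times \R^m$. The second coordinate of $\lambda(w, u) - \A(w, u) = (f, r)$ gives $u = (r - g(w))/\lambda$; substituting into the first coordinate, multiplying through by $1/\lambda$, and using the identity $\langle Ng(w), \varphi\rangle = \int_{\partial \Omega} w\,\pi_b \varphi \,\d\sigma$, the whole system collapses to the single equation
\begin{equation*}
\Phi_{\lambda,\, 1/\lambda}(w) \;=\; f + \tfrac{1}{\lambda}\, Nr \quad\text{in}~W^{1,p}(\Omega)'.
\end{equation*}
Since $\lambda, 1/\lambda > 0$, \cref{prop:dual-map} supplies a unique $w \in W^{1,p}(\Omega)$; defining $u \triangleq (r - g(w))/\lambda \in \R^m$ and tracing the computation backwards gives $-\Phi(w) + Nu = \lambda w - f \in L^2(\Omega)$, so that $(w,u) \in \dom(\A)$ and solves the resolvent equation. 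The main obstacle in this step is the algebraic reduction: one has to recognize that the ``extra'' boundary term $(1/\lambda) Ng(w)$ produced by eliminating $u$ is exactly what promotes the plain $\Phi$ into a duality map of the form $\Phi_{\lambda, 1/\lambda}$ already handled by \cref{prop:dual-map}. Once this rewriting is in place, the nonlinear existence and continuity theory in \cref{prop:dual-map} does all the remaining analytical work.
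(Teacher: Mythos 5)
Your proposal is correct and follows essentially the same route as the paper: density via \cref{lem:comp-heat}, dissipativity by cancelling the boundary term against the output coordinate and invoking monotonicity of $\Phi$, and the range condition by eliminating $u$ from the second coordinate and recognizing the resulting equation as $\Phi_{\lambda,\lambda^{-1}}(w) = f + \lambda^{-1}Nr$, solved via \cref{prop:dual-map}. If anything, your explicit reduction is slightly more careful than the paper's displayed resolvent equations (which omit the $f$ term on the right-hand side, evidently a typo), though your phrase ``multiplying through by $1/\lambda$'' is a harmless misstatement since the displayed equation is exactly what direct substitution of $u=(r-g(w))/\lambda$ gives.
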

\begin{proof} \Cref{lem:comp-heat} already establishes density  of $\dom(\A)$  in $L^2(\Omega) \times \R^m$ (and even $W^{1,p}(\Omega) \times \R^{m}$). Dissipativity follows from straightforward computation. 
As for the range condition, for $\lambda > 0$ and $(f, y) \in L^2(\Omega) \times \R^m$ we see that $(w, u) \in \dom(\A)$ solves $ \lambda(w, u) - \A(w, u) = (f, y)$ if and only if
\begin{subequations}
\begin{align}
\label{eq:u-lambda}
&u = \lambda^{-1}(g(w) + y), 
\\ 
\label{eq:w-lambda}
& \lambda w + \Phi(w) + \lambda^{-1}Ng(w) = \lambda^{-1}Ny.
\end{align}
\end{subequations}
We may rewrite
\cref{eq:w-lambda} as $\Phi_{\lambda,\lambda^{-1}}(w) = \lambda^{-1}Ny$, so that by \cref{prop:dual-map} there exists a unique solution $w \in W^{1,p}(\Omega)$ given by $w = \Phi_{\lambda, \lambda^{-1}}(\lambda^{-1}Ny)$. The associated $u \in \R^{m}$ is then given by \cref{eq:u-lambda} and the range condition is satisfied.
\end{proof}

\begin{prop}[Verification of \cref{as:compact}]\label{prop:w-compact}
For any $\lambda \geq 0$, the map
    \begin{equation}
    \label{eq:u-w-res}
    (f, u)   \mapsto (\lambda - A(\cdot, u))^{-1}(f) 
    \end{equation}
    is single-valued, bounded, continuous from $L^2(\Omega) \times \R^m$ into $W^{1,p}(\Omega)$, and compact from $L^2(\Omega) \times L^2(\partial \Omega)$ into $L^2(\Omega)$.
\end{prop}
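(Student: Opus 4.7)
The plan is to recognize the resolvent equation as an identity for the duality map $\Phi_{\lambda,0}$ already inverted in \cref{prop:dual-map}, extract quantitative $W^{1,p}$-bounds from the coercivity of $\Phi_{\lambda,0}$, and then obtain $L^2$-compactness via Rellich--Kondrachov.

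First, I would rewrite the resolvent equation. Given $(f,u)\in L^2(\Omega)\times L^2(\partial\Omega)$, the equation $\lambda w - A(w,u) = f$ with $w\in\dom(A(\cdot,u))$ is equivalent, via the pivot embedding $L^2(\Omega)\hookrightarrow W^{1,p}(\Omega)'$, to
\begin{equation*}
\Phi_{\lambda,0}(w) = f + Nu \quad \text{in } W^{1,p}(\Omega)',
\end{equation*}
because $\Phi_{\lambda,0}(w) = \Phi(w) + \lambda w$ by \cref{eq:phi-weak}. By \cref{prop:dual-map}, $\Phi_{\lambda,0}$ admits a continuous inverse $W^{1,p}(\Omega)'\to W^{1,p}(\Omega)$, so this equation has a unique solution $w = \Phi_{\lambda,0}^{-1}(f + Nu)$. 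This immediately yields single-valuedness and continuity of the resolvent map into $W^{1,p}(\Omega)$, since $(f,u)\mapsto f + Nu$ is continuous from $L^2(\Omega)\times\R^m$ (a fortiori from $L^2(\Omega)\times L^2(\partial\Omega)$) into $W^{1,p}(\Omega)'$.

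Next, I would establish the boundedness property. Pairing $\Phi_{\lambda,0}(w) = f + Nu$ with $w$ and using the coercivity identity \cref{eq:phi-w-w}, one gets
\begin{equation*}
\|w\|_{W^{1,p}(\Omega)}^p \leq C\,\langle \Phi_{\lambda,0}(w),w\rangle \leq C\|f+Nu\|_{W^{1,p}(\Omega)'}\,\|w\|_{W^{1,p}(\Omega)},
\end{equation*}
hence $\|w\|_{W^{1,p}(\Omega)} \leq C'\,\|f+Nu\|_{W^{1,p}(\Omega)'}^{1/(p-1)}$. Combined with the continuity of the inclusion $L^2(\Omega)\hookrightarrow W^{1,p}(\Omega)'$ and boundedness of $N : L^2(\partial\Omega)\to W^{1,p}(\Omega)'$, this shows that the resolvent map sends bounded subsets of $L^2(\Omega)\times L^2(\partial\Omega)$ into bounded subsets of $W^{1,p}(\Omega)$.

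Finally, for the compactness assertion, I would invoke the compact Sobolev embedding $W^{1,p}(\Omega)\hookrightarrow L^2(\Omega)$, valid for $p\geq 2$ on bounded smooth domains (by Rellich--Kondrachov, noting $L^p(\Omega)\hookrightarrow L^2(\Omega)$ continuously on a bounded domain). A bounded sequence $(f_n,u_n)$ in $L^2(\Omega)\times L^2(\partial\Omega)$ produces, through the $W^{1,p}$-bound of the previous step, a bounded sequence $w_n = \Phi_{\lambda,0}^{-1}(f_n + Nu_n)$ in $W^{1,p}(\Omega)$, which therefore admits a subsequence strongly convergent in $L^2(\Omega)$. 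The only technical point requiring a bit of care is checking that the coercivity of $\Phi_{\lambda,0}$ survives the factoring through the duality pairing when $u\in L^2(\partial\Omega)$ rather than $\R^m$; this is immediate since $N$ is well-defined and bounded on $L^2(\partial\Omega)$, so no extra smoothness of $u$ is needed anywhere in the argument.
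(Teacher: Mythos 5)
Your proposal is correct and follows essentially the same route as the paper's proof: identify the resolvent with $\Phi_{\lambda,0}^{-1}(f+Nu)$ via \cref{prop:dual-map}, derive a $W^{1,p}(\Omega)$ bound by pairing the equation with $w$, and conclude compactness into $L^2(\Omega)$ through the Rellich--Kondrachov embedding. Your explicit coercivity estimate $\|w\|_{W^{1,p}(\Omega)}\leq C\|f+Nu\|_{W^{1,p}(\Omega)'}^{1/(p-1)}$ simply makes quantitative the "standard arguments" the paper invokes, so no substantive difference.
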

\begin{proof}
Let $\lambda \geq 0$. For all $f \in L^2(\Omega)$ and $u \in \R^m$, the unique solution $w$ to $\lambda w - A(w, u) = f$ is given by
$w = \Phi^{-1}_{\lambda,0}(f + Nu)$. 
Recall that $N$ is continuous from $L^2(\partial \Omega$ into $W^{1,p}(\Omega)$ and that, by \cref{prop:dual-map}, $\Phi^{-1}_{\lambda,0}$ is continuous from $W^{1,p}(\Omega)'$ into $W^{1,p}(\Omega)'$. Continuity follows.
Furthermore, pairing $\Phi_{\lambda,0}(w)$ with $w$ and carrying out a standard line of arguments employing the Cauchy--Schwarz and Young inequalities, and also continuity from $W^{1,p}(\Omega)$ into $ L^2(\partial \Omega)$ of the trace,
we see that,
whenever $(f, u)$ describes a fixed bounded set of $L^2(\Omega) \times \R^m \simeq L^2(\Omega) \times \pi_b L^2(\partial \Omega)$,  
the variable $w$ remains bounded in $W^{1,p}(\Omega)$. Now, recall from the Rellich--Kondrachov theorem that the embedding $W^{1,p}(\Omega) \hookrightarrow L^2(\Omega)$ is compact; see, e.g., \cite[Theorem 9.16]{Bre11book}. Thus, $w$ lies in a relatively compact subset of $L^2(\Omega)$. Hence, the map 
\cref{eq:u-w-res} is compact from $L^2(\Omega) \times \R^m$ into $L^2(\Omega)$.
\end{proof}

\begin{prop}[Verification of \cref{as:A,as:dist}, unconstrained control]
\label{prop:steady-heat}
For every $u^\star \in \R^m$, there exists a unique $w^\star \in W^{1,p}(\Omega)$ such that $0 = A(w^\star, u^\star)$, given by
\begin{equation}
w^\star =  A(\cdot, u^\star)^{-1}(0) =  \Phi^{-1}(Nu^\star).
\end{equation}
Furthermore, the map $u^\star \in \R^m \mapsto w^\star \in W^{1,p}(\Omega)$ is injective. Finally, given two of such steady-state pairs, say $(w^\star, u^\star)$ and $(w^\dagger, u^\dagger)$, if 
$
\langle u^\star - u^\dagger, g(w^\star) - g(w^\dagger)\rangle_{\R^m} = 0
$
then $w^\star = w^\dagger$.
\end{prop}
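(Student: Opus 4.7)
The plan is to reduce each of the three claims to the inversion and monotonicity properties of the duality maps $\Phi_{\lambda,\mu}$ already established in \cref{prop:dual-map}. Recall that $A(w,u) = -\Phi(w) + Nu$ with $\Phi = \Phi_{0,0}$, so the steady-state equation $0 = A(w^\star,u^\star)$ is simply $\Phi(w^\star) = Nu^\star$ in $W^{1,p}(\Omega)'$.

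For the first claim (existence and uniqueness of $w^\star$), I would directly invoke \cref{prop:dual-map}: since $\Phi$ admits a continuous inverse $\Phi^{-1} : W^{1,p}(\Omega)' \to W^{1,p}(\Omega)$ and $Nu^\star \in W^{1,p}(\Omega)'$, the unique solution is $w^\star = \Phi^{-1}(Nu^\star)$. Note that by construction $w^\star$ automatically belongs to $\dom(A(\cdot,u^\star))$ since $\Phi(w^\star) - Nu^\star = 0 \in L^2(\Omega)$.

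For the injectivity of $u^\star \mapsto w^\star$, suppose $w^\star = w^\dagger$ for two steady-state pairs. Applying $\Phi$ and cancelling yields $N(u^\star - u^\dagger) = 0$. As observed in the paragraph introducing $N$, the trace space $W^{1-1/p,p}(\partial\Omega)$ is dense in $L^2(\partial\Omega)$, so $N$ has trivial kernel, giving $u^\star = u^\dagger$.

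The distinguishability statement is the main substantive step. Starting from $\Phi(w^\star) - \Phi(w^\dagger) = N(u^\star - u^\dagger)$, I would pair both sides with $w^\star - w^\dagger$. Using \cref{eq:incr-phi} with $\lambda = \mu = 0$, the left-hand side equals
\begin{multline*}
\int_\Omega \bigl((\vnabla w^\star)^{p-1} - (\vnabla w^\dagger)^{p-1}\bigr)\cdot(\vnabla w^\star - \vnabla w^\dagger)\,\d x \\
+ \int_\Omega \bigl((w^\star)^{p-1} - (w^\dagger)^{p-1}\bigr)(w^\star - w^\dagger)\,\d x,
\end{multline*}
while the right-hand side becomes $\int_{\partial\Omega}(u^\star-u^\dagger)(w^\star-w^\dagger)\,\d\sigma$. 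Since $u^\star - u^\dagger \in \operatorname{span}(b_1,\dots,b_m)$, the trace $w^\star - w^\dagger$ may be replaced with its projection $\pi_b(w^\star-w^\dagger)$, and the boundary integral equals $\langle u^\star-u^\dagger, g(w^\star)-g(w^\dagger)\rangle_{\R^m}$ by definition of $g$ in \cref{eq:heat-output}. Because $p - 1$ is odd, $t \mapsto t^{p-1}$ is strictly monotone on $\R$, so both integrands on the left are pointwise nonnegative and vanish only where $w^\star = w^\dagger$ (respectively $\vnabla w^\star = \vnabla w^\dagger$). Consequently, the vanishing of the right-hand side forces $w^\star = w^\dagger$ a.e.\ in $\Omega$, and therefore in $W^{1,p}(\Omega)$.

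The only delicate point is the last paragraph: one must carefully justify the replacement of the boundary trace by its projection onto $\operatorname{span}(b_j)$, using the $L^2(\partial\Omega)$-orthonormality of the $b_j$ and the identification $\R^m \simeq \operatorname{span}(b_1,\dots,b_m)$ introduced after \cref{eq:id-u}; everything else reduces to bookkeeping with the results already proved.
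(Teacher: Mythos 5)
Your proposal is correct and follows essentially the same route as the paper: existence/uniqueness via invertibility of the duality map $\Phi=\Phi_{0,0}$ (the paper cites the proof of \cref{prop:w-compact}, which rests on the same inverse), injectivity via the trivial kernel of $N$, and distinguishability by pairing $\Phi(w^\star)-\Phi(w^\dagger)=N(u^\star-u^\dagger)$ with $w^\star-w^\dagger$ and invoking \cref{eq:incr-phi} with the strict monotonicity of $t\mapsto t^{p-1}$. The bookkeeping with $\pi_b$ that you flag is exactly the identification the paper uses implicitly, so no gap remains.
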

\begin{proof}
The first two statements immediately follow from the proof of \cref{prop:w-compact} and the injectivity of $N$. For the final statement, we start by noting that
$
\langle u^\star - u^\dagger, g(w^\star) - g(w^\dagger)\rangle_{\R^m} = \int_{\partial \Omega} (u^\star - u^\dagger)(w^\star - w^\dagger) \, \d \sigma 
= \langle N(u^\star - u^\dagger), w^\star - w^\dagger\rangle.
$
Since $Nu^\star = \Phi(w^\star)$ and $Nu^\dagger = \Phi(w^\dagger)$, 
$\langle u^\star - u^\dagger, g(w^\star) - g(w^\dagger)\rangle_{\R^m} = \langle \Phi(w_1) - \Phi(w_2), w_1 - w_2\rangle$. From \cref{eq:incr-phi}, $\langle u^\star - u^\dagger, g(w^\star) - g(w^\dagger)\rangle_{\R^m} = 0$ yields $w^\star = w^\dagger$.
\end{proof}

\para{Strict output passivity and set-point tracking} Using \cref{eq:incr-phi}, we may show that for strong solutions $w_1,w_2$ to \cref{eq:IBVP}, with respective inputs $u_1,u_2$ (in the sense of \cref{def:open-loop-DE} for the abstract 
problem \cref{eq:abstract-heat}), the associated function $h$ is 
\begin{multline}
h(w_1, u_1; w_2, u_2) \\ = \int_\Omega ((\vnabla{w_1})^{p -1} - (\vnabla w_2)^{p-1})\cdot(\vnabla w_1 - \vnabla w_2) \, \d x \\ 
+  \int_\Omega(w_1^{p-1} - w_2^{p-1})(w_1 - w_2) \, \d x.
\end{multline}
Recalling that $p - 1$ is odd, if $h(w_1, u_1; w_2, u_2)= 0 $ a.e.\ then $w_1(t) = w_2(t)$ for all $t \geq 0$. In particular, the system \cref{eq:IBVP}--\cref{eq:heat-output} is trivially strictly output incrementally passive in the sense of \cref{def:SOIIP} and all steady-state pairs are $h$-detectable in the sense of \cref{as:OL-detec}.

\begin{theo}
Let $K$ be a closed convex subset of $\R^m$ with nonempty interior and let $r \in \R^m$ be a feasible reference. The following hold.
\begin{enumerate}[label=(\alph*)]
    \item \emph{(Closed-loop well-posedness.)}\label{it:wp-heat} The nonlinear system \cref{eq:IBVP} in closed loop with the projected integrator
    \begin{equation}
    \label{eq:PI-heat}
    \dot{z} \in r - g(w) - N_K(z),
    \end{equation}
    gives rise to a continuous semigroup of contractions on the closed convex set $L^2(\Omega) \times K$.
    \item \emph{(Asymptotic convergence.)}\label{it:conv-heat} All generalized solutions $(w, z) \in \C(\R^+, L^2(\Omega) \times \R^m)$ to \cref{eq:IBVP}--\cref{eq:PI-heat} converge in $L^2(\Omega) \times \R^m$ to the unique steady-state pair $(w^\star, u^\star) \in W^{1,p}(\Omega) \times \R^m$ satisfying $g(w^\star) = r$.
    \item \emph{(Output tracking.)} \label{it:tracking-heat} For all \emph{strong} solutions $(w, z)$ to \cref{eq:IBVP}--\cref{eq:PI-heat}, i.e., with initial data $(w_0, z_0) \in \dom(A)$, $g(w(t)) \to r$ in $\R^m$ as $t \to + \infty$.
\end{enumerate}
\end{theo}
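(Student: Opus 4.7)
\textbf{Parts~\ref{it:wp-heat} and~\ref{it:conv-heat}.} These fall out of the abstract framework already developed. All of \cref{as:A,as:compa-g,as:max-dis-c,as:dist,as:compact} have been verified for the abstract system \cref{eq:abstract-heat} in the propositions preceding the theorem, so \cref{cor:cl_sem_gen} yields the closed-loop contraction semigroup on $L^2(\Omega)\times K$, proving~\ref{it:wp-heat}. Moreover, because $p-1$ is odd, the identity \cref{eq:incr-phi} was just shown to imply both strict output incremental passivity (\cref{def:SOIIP}) and the $h$-detectability of every steady-state pair (\cref{as:OL-detec}). \Cref{theo:strict-tracking} is therefore directly applicable and delivers~\ref{it:conv-heat} together with the invariance $z(t)\in K$.

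\textbf{Plan for part~\ref{it:tracking-heat}.} The task is to promote the $L^2(\Omega)$-convergence $w(t)\to w^\star$ supplied by~\ref{it:conv-heat} to convergence of the boundary integrals defining $g$. My plan is first to establish the uniform bound
\begin{equation*}
\sup_{t\geq 0}\|w(t)\|_{W^{1,p}(\Omega)}<+\infty
\end{equation*}
for strong solutions, and then to extract a weak $W^{1,p}$-limit. For the bound, I would exploit that, since $\A_r$ is maximal dissipative on $L^2(\Omega)\times\R^m$ and $(w_0,z_0)\in\dom(\A_r)$, the classical nonlinear contraction semigroup theory recalled in \cref{ap:sg} guarantees that $t\mapsto\|(\dot{w}(t),\dot{z}(t))\|$ is a.e.\ non-increasing, so $\dot{w}$ is bounded in $L^2(\Omega)$ uniformly in $t$; contractivity of $\{\S^r_t\}_{t\geq 0}$ also keeps $z(t)$ bounded in $\R^m$. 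From the open-loop identity $\Phi(w(t))=Nz(t)-\dot{w}(t)$ this forces $\Phi(w(t))$ to be bounded in $L^2(\Omega)\hookrightarrow W^{1,p}(\Omega)'$, and pairing with $w(t)$ via \cref{eq:phi-w-w} (taking $\lambda=\mu=0$ and using $p\geq 2$) yields $\|w(t)\|_{W^{1,p}(\Omega)}^{p-1}\leq\|\Phi(w(t))\|_{W^{1,p}(\Omega)'}$, which is the desired bound.

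\textbf{Conclusion and main obstacle.} With $w$ bounded in the reflexive space $W^{1,p}(\Omega)$ and $w(t)\to w^\star$ in $L^2(\Omega)$ by~\ref{it:conv-heat}, any sequence $t_n\to+\infty$ has a further subsequence along which $w(t_n)\rightharpoonup w^\star$ weakly in $W^{1,p}(\Omega)$, the limit being pinned down by uniqueness of the $L^2$-limit. Weak-to-weak continuity of the trace operator $W^{1,p}(\Omega)\to L^2(\partial\Omega)$ then makes $\int_{\partial\Omega}w(t_n)b_j\,\d\sigma\to\int_{\partial\Omega}w^\star b_j\,\d\sigma$ for each $j$; since every subsequence of $\{g(w(t))\}$ admits a further subsequence tending to $g(w^\star)=r$, the whole trajectory converges, giving~\ref{it:tracking-heat}. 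The main obstacle I anticipate is the $L^\infty_t$-bound on $\dot{w}$: it rests on the precise regularity statement for semigroups generated by operators that are maximal dissipative on a \emph{closed convex subset} of a Hilbert space (here $L^2(\Omega)\times K$) rather than on the whole ambient space, which is why the appendix result \cref{theo:abstract-cauchy} is needed rather than the standard Hille--Yosida or Brezis--Komura statements for the linear or unconstrained nonlinear case.
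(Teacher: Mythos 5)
Your proposal is correct and follows essentially the same route as the paper: (a) and (b) by direct application of the abstract results, and (c) by combining the principal-section bound from \cref{theo:abstract-cauchy} (giving a uniform $L^2$ bound on $\dot w$, hence on $\Phi(w(t))$ in $W^{1,p}(\Omega)'$ and thus on $w(t)$ in $W^{1,p}(\Omega)$, which is precisely the boundedness content of \cref{prop:w-compact}) with weak $W^{1,p}$-compactness, weak continuity of the linear trace-based output $g$, and a sub-subsequence argument. Only a cosmetic slip: $Nz(t)-\dot w(t)$ is bounded in $W^{1,p}(\Omega)'$ rather than in $L^2(\Omega)$, since $Nz$ need not lie in $L^2(\Omega)$, but this is exactly what your pairing argument needs.
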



\begin{proof}
\Cref{it:wp-heat,it:conv-heat} are immediate consequences of our abstract results and in particular \cref{theo:strict-tracking} (note that \cref{prop:steady-heat} implies that \cref{as:A,as:dist} are satisfied for \emph{any} choice of constraint set $K$). As for \cref{it:tracking-heat}, let $(w_0, z_0) \in \dom(A) = \dom(\A)$ and let $(w, z)$ be the corresponding strong solution to \cref{eq:IBVP}--\cref{eq:PI-heat}. By \cref{theo:abstract-cauchy},
\begin{multline}
\|\A_r^0(w(t), z(t))\|_{L^2(\Omega) \times \R^m} \\ 
\leq \|\A_r^0(w_0, z_0)\|_{L^2(\Omega) \times \R^m}, \quad t \geq 0,
\end{multline}
where $\A_r^0$ denotes the \emph{principal section} (see \cref{def:principal}) of the maximal dissipative closed-loop generator $\A_r$. In particular, $\|A(w, z)\|_{L^2(\Omega)}$ remains bounded. Since $z$ is bounded in $\R^m$, by \cref{prop:w-compact} $w$ must be bounded in $W^{1,p}(\Omega)$, which in turn yields boundedness in $L^2(\partial \Omega)$ of $w|_{\partial \Omega}$. In particular, $y = g(w)$ is bounded in $\R^m$. We will prove that $g(w(t)) \to r$ as $t \to + \infty$ by proving that $g(w(t_n)) \to r$ for any sequence of times $t_n \to + \infty$. Let $\{t_n\}_{n \in \N}$ be such a sequence; by compactness and since $g(w(t_n))$ is bounded in $\R^m$, it suffices to prove that \emph{any} subsequential limit of $\{g(w(t_n))\}_\N$ must be $r$. Suppose that, up to a subsequence, $g(w(t_n))$ converges to some $r' \in \R^m$ as $n \to + \infty$. Since $w(t_n)$ remains bounded in $W^{1,p}(\Omega)$ and converges (strongly) in $L^2(\Omega)$ to $w^\star$, up to another subsequence, it has to converge to $w^\star$ weakly in $W^{1,p}(\Omega)$ as well. Since the map $g$ is linear and continuous from $W^{1,p}(\Omega)$ into $\R^m$, $g(w(t_n))$ converges weakly hence strongly ($\R^m$ finite-dimensional) to $g(w^\star) = r$ as $n \to + \infty$. Uniqueness of the limit yields $r = r'$.
\end{proof}

\begin{rem}[Finite-dimensional input space and compactness]
Most of the results in this section still hold if we replace the finite-dimensional input  space $\R^m \simeq \pi_b L^2(\partial \Omega)$ with $L^2(\partial \Omega)$. What breaks down 
is compactness of the nonlinear resolvents associated with the closed loop, which we exploit in our stability analysis. However, a careful examinations of \emph{weak $\omega$-limit} sets in the spirit of \cite{Daf78} might help us overcome this issue and obtain asymptotic convergence with infinite-dimensional integrator. This is left for future work.
\end{rem}

\section{Concluding remarks}
\label{sec:conclu}
We have introduced an abstract framework for constrained set-point tracking in impedance passive infinite-dimensional nonlinear systems, focusing on the use of quadratic storage functions. There are several directions for future work.

\begin{itemize}[wide]
\item Unlike finite-dimensional approaches that allow for more general storage functions (e.g., \cite{Jay05,JayOrt07}), we find that there is a lack of tools  to handle well-posedness of infinite-dimensional dynamics that are contractive  with respect to more general metrics. We believe this to be a fundamental gap in the current literature and a promising direction for future work.
\item While the proposed framework is primarily developed with infinite-dimensional systems in mind, its applicability is relevant to finite-dimensional settings as well. For example, the case study in \cref{subsec:RLC} shows how to solve the constrained set-point tracking problem for impedance-passive circuits with multivalued dynamics. Focusing on more structured, application-driven classes of nonlinear finite-dimensional systems could allow for richer storage function choices, potentially enabling new results for nonlinear circuits coupled with differential inclusions, in the spirit of \cite{TanBro18}.
\item Finally, our \cref{as:A} rules out nontrivial steady-state behavior under constant inputs.
Nevertheless, it would be interesting to extend our framework to systems with non-unique equilibria, particularly within the setting of differential inclusions, which naturally accommodate such behaviors.
\end{itemize}

\appendices
\section{Some elements of nonlinear contraction semigroup theory}\label{ap:sg}

Let $H$ be a Hilbert space and $A : H \rightrightarrows H$ be a possibly multivalued map. Consider the abstract differential inclusion
\begin{equation}
\label{eq:abstract-DE}
\dot{x} \in A(x).
\end{equation}
\begin{defi}[Strong solutions] 
\label{def:str-sol}
A \emph{strong} solution to \cref{eq:abstract-DE} is any function $x : \R^+ \to \dom(A)$ such that $x : \R^+ \to H$ is absolutely continuous 
and \cref{eq:abstract-DE} holds a.e.\ in $(0, +\infty)$.
\end{defi}
\begin{defi}[Generalized solutions]
\label{def:gen-sol}
A \emph{generalized} solution to \cref{eq:abstract-DE} is  any function $x : \R^+ \to H$ such that, for any $T > 0$, $x$ is the limit in $\C([0, T], H)$ of a sequence of strong solutions to \cref{eq:abstract-DE}.
\end{defi}

We remark that strong solutions are also generalized solutions and generalized solutions take values in $\overline{\dom(A)}$. 

\begin{defi}[Maximal dissipative operator]
\label{def:max-dis} The map $A : H \rightrightarrows H$ is said to be \emph{maximal dissipative} if it satisfies the following properties:
\begin{enumerate}[label=(\roman*)]
    \item ({Dissipativity.}) 
    For all $x_1, x_2 \in \dom(A)$
    \begin{equation}
    \langle f_1 - f_2, x_1 - x_2\rangle_H \leq 0, 
    \end{equation}
    for all $f_1 \in A(x_1)$, $f_2 \in A(x_2)$;
    \item (Range condition.)\label{it:range-cond} There exists $\lambda > 0$ such that $\ran(\lambda \id - A) = H$.
\end{enumerate}
\end{defi}

\begin{rem}
Strictly speaking, our \cref{def:max-dis} is  a definition of \emph{$m$-dissipative} operators, which happen to coincide with maximal dissipative operators in the Hilbert space setting; see, e.g., \cite[Proposition 2.2]{Bre73book}.
\end{rem}

\begin{defi}\label{def:principal} If $A$ is maximal dissipative, its \emph{principal section} $A^0:\dom(A)\to H$ is the single-valued map 
\begin{equation}\label{eq:f}
A^0(x) \triangleq \operatorname*{arg\,min}_{f \in A(x)} \|f\|_H, \quad x \in \dom(A).
\end{equation}
\end{defi}
\Cref{def:principal} makes sense since, in the maximal dissipative case, $A(x)$ is a nonempty closed convex subset of $H$ for each $x \in \dom(A)$ \cite[Chapitre II, Section 4]{Bre73book}. We also point out that $\overline{\dom(A)}$ must be a closed convex subset of $H$ if $A$ is maximal dissipative \cite[Théorème 2.2]{Bre73book}.

The next theorem summarizes well-known results on existence and basic properties of solutions to \cref{eq:abstract-DE} when $A$ is maximal dissipative. Proofs can be found in, e.g., \cite{Bre73book,Bar76book}.
\begin{theo}
[Maximal dissipative differential inclusions]\label{theo:abstract-cauchy}
Assume that $A$ is maximal dissipative.
\begin{enumerate}[label=(\alph*)]
    \item \emph{(Strong solutions.)} Let $x_0 \in \dom(A)$. There exists a unique strong solution $x$ to \cref{eq:abstract-DE} satisfying $x(0) = x_0$. Moreover, the solution $x$ enjoys the following
    properties:
    \begin{itemize}
        \item The right derivative of $x$ (taken in $X$) exists  everywhere in $\R^+$ and
        \begin{equation}
        \label{eq:right-der}
        \frac{\d^+ x}{\d t} =A^0(x);
        \end{equation}
        \item The map $t \mapsto A^0(x(t))$ is right-continuous from $\R^+$ into $X$ and we have
        \begin{equation}
        \|A^0(x(t))\|_H \leq \|A^0(x_0)\|_H, \quad t \geq 0.
        \end{equation}
    \end{itemize}
    \item \emph{(Generalized solutions.)} Let $x_0 \in \overline{\dom(A)}$. There exists a unique generalized solution $x$ to \cref{eq:abstract-DE} with $x(0) = x_0$.
\end{enumerate}
Finally, if $x_1$ and $x_2$ are two generalized solutions to \cref{eq:abstract-DE}, 
\begin{equation}
\|x_1(t) - x_2(t)\|_H \leq \|x_1(0) - x_2(0)\|_H, \quad t \geq 0.
\end{equation}
\end{theo}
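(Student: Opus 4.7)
The plan is to prove this via Yosida approximation, which is the classical Kōmura--Brezis approach to existence for maximal dissipative evolutions. First I would establish resolvent properties: using the range condition together with dissipativity, show that for any $\lambda > 0$ the resolvent $J_\lambda \triangleq (\id - \lambda A)^{-1}$ is a well-defined single-valued $1$-Lipschitz map from $H$ into $\dom(A)$, and that the range condition $\ran(\id - \lambda A) = H$ extends from the single given $\lambda > 0$ to all $\lambda > 0$ (standard Neumann/fixed-point argument). Define the Yosida approximation $A_\lambda \triangleq \lambda^{-1}(J_\lambda - \id)$ and record its core properties: $A_\lambda$ is $(2/\lambda)$-Lipschitz, dissipative, single-valued on all of $H$, satisfies $A_\lambda x \in A(J_\lambda x)$, and $\|A_\lambda x\|_H \leq \|A^0 x\|_H$ for $x \in \dom(A)$ with $A_\lambda x \to A^0 x$ as $\lambda \to 0^+$.

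Next, for $x_0 \in \dom(A)$, solve the approximate Cauchy problem $\dot x_\lambda = A_\lambda(x_\lambda)$, $x_\lambda(0) = x_0$, using the Picard--Lindelöf theorem on $H$ since $A_\lambda$ is globally Lipschitz. Derive uniform a priori bounds by differentiating $t \mapsto \tfrac12 \|\dot x_\lambda(t)\|_H^2$ along the equation: using dissipativity of $A_\lambda$ one obtains $(\mathrm{d}/\mathrm{d}t)\|\dot x_\lambda\|_H \leq 0$, hence $\|\dot x_\lambda(t)\|_H = \|A_\lambda x_\lambda(t)\|_H \leq \|A_\lambda x_0\|_H \leq \|A^0 x_0\|_H$ for all $t \geq 0$. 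The key estimate is then the Cauchy property of the family $\{x_\lambda\}_{\lambda > 0}$ in $\C([0, T], H)$: compute $(\mathrm{d}/\mathrm{d}t)\tfrac12\|x_\lambda - x_\mu\|_H^2$ and use dissipativity of $A$ together with $\|x_\lambda - J_\lambda x_\lambda\|_H = \lambda \|A_\lambda x_\lambda\|_H \leq \lambda \|A^0 x_0\|_H$, yielding an estimate of the form $\|x_\lambda(t) - x_\mu(t)\|_H^2 \leq 4(\lambda + \mu) T \|A^0 x_0\|_H^2$. Passing to the limit produces a limit curve $x \in \C(\R^+, H)$ which is Lipschitz with constant $\|A^0 x_0\|_H$ and hence absolutely continuous.

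The next step is to verify $x(t) \in \dom(A)$ for all $t$ and that $x$ solves \cref{eq:abstract-DE}. For each $t$, $J_\lambda x_\lambda(t) \to x(t)$ in $H$ (by the resolvent estimate above) and $A_\lambda x_\lambda(t) = A(J_\lambda x_\lambda(t))$-selection is uniformly bounded; weak compactness in $L^2(0, T; H)$ of $\dot x_\lambda$, combined with maximal dissipativity of $A$ (closedness of the graph in the strong$\times$weak topology, cf.\ Brezis), yields $\dot x(t) \in A(x(t))$ a.e. For the right-derivative and minimality assertion, use the fact that convex analysis forces the solution to select the minimal-norm element: $t \mapsto \|A^0 x(t)\|_H$ is shown to be nonincreasing (same dissipativity computation applied to $x$ directly, or by passing to the limit in the bound for $\dot x_\lambda$), and $\mathrm{d}^+ x/\mathrm{d}t = A^0 x$ follows from the pointwise characterization of minimal sections along contraction semigroup orbits. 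Right-continuity of $t \mapsto A^0 x(t)$ then follows from strong right-continuity of $\mathrm{d}^+ x/\mathrm{d}t$ combined with the nonincreasing norm.

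Uniqueness and the global contraction estimate are the easy end: for two strong solutions $x_1, x_2$, absolute continuity gives $\tfrac12 (\mathrm{d}/\mathrm{d}t)\|x_1 - x_2\|_H^2 = \langle \dot x_1 - \dot x_2, x_1 - x_2\rangle_H \leq 0$ by dissipativity of $A$, whence the contraction inequality. Generalized solutions are then defined and handled by density: for $x_0 \in \overline{\dom(A)}$, pick $x_{0,n} \in \dom(A)$ with $x_{0,n} \to x_0$; the corresponding strong solutions $x_n$ form a Cauchy sequence in $\C([0, T], H)$ by contractivity, and the limit defines the unique generalized solution, with the contraction inequality passing to the limit. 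The main technical obstacle is the Cauchy estimate between $x_\lambda$ and $x_\mu$, which requires the delicate interplay of monotonicity of $A$ with the quantitative bound $\|x_\lambda - J_\lambda x_\lambda\|_H = O(\lambda)$; everything else is relatively mechanical once this convergence is secured.
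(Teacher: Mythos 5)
The paper does not prove this theorem: it explicitly states that it ``summarizes well-known results'' and refers the reader to Brezis and Barbu for proofs. Your sketch is precisely the classical K\=omura--Brezis Yosida-approximation argument found in those references (resolvent and Yosida approximant properties, global solvability of the Lipschitz-regularized problem, the $\|x_\lambda - x_\mu\|^2 = O(\lambda+\mu)$ Cauchy estimate, demiclosedness of the maximal monotone graph to identify the limit, and density plus contractivity for generalized solutions), and it is correct.
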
 

\begin{defi}[Contraction semigroup] Let $C$ be a closed convex subset of $H$. A (continuous) \emph{contraction semigroup} is a collection $\sg{S}$ of maps $S_t : C \to C$ satisfying: 
\begin{enumerate}[label=(\roman*)]
    \item (Algebraic semigroup properties.) 
    \begin{equation}
    S_0 = \id; \quad S_{t + r} = S_t \circ S_r = S_r \circ S_t, \quad t, r \geq 0;
    \end{equation}
    \item (Continuity.) For all $x_0 \in C$, $t \mapsto S_t x_0$ is continuous from $\R^+$ into $X$.
    \item (Contraction.) For all $x_0^1, x_0^2 \in C$,
    \begin{equation}
    \|S_t x_0^1 - S_t x^2_0\|_H \leq \|x_0^1 - x_0^2\|_H, \quad t \geq 0.
    \end{equation}
\end{enumerate}
\end{defi}
\begin{coro}[Semigroup generation]
\label{coro:generation}
Suppose that $A$ is  maximal dissipative. The maps $x_0 \to x(t)$, where $t \geq 0$ and $x$ is the unique solution to \cref{eq:abstract-DE} with initial data $x_0 \in C \triangleq \overline{\dom(A)}$, define a continuous contraction semigroup $\sg{S}$ on the closed convex set $C$. Furthermore, given $x_0 \in C$,  $x_0 \in \dom(A)$ if and only if
\begin{equation}
\label{eq:generation}
\liminf_{t \to 0^+} \frac{\|S_tx_0 - x_0\|_H}{t} < + \infty,
\end{equation}
and for $x_0 \in \dom(A)$,
\begin{equation}
\label{eq:generation-bis}
\lim_{t \to 0^+} \frac{S_tx_0 - x_0}{t} = A^0x_0.
\end{equation}
\end{coro}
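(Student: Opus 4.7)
The plan is to combine \cref{theo:abstract-cauchy} with an energy-based maximality argument. First, the semigroup structure is essentially immediate. The algebraic identities $S_0 = \id$ and $S_{t+r} = S_t \circ S_r$ follow from uniqueness of generalized solutions, since both $t \mapsto S_{t+r}x_0$ and $t \mapsto S_t(S_r x_0)$ are generalized solutions with initial data $S_r x_0$. Continuity is built into \cref{def:gen-sol}, and the contraction property is exactly the final inequality of \cref{theo:abstract-cauchy}. For the easy direction of \cref{eq:generation}, if $x_0 \in \dom(A)$ then specializing the right-derivative formula \cref{eq:right-der} at $t = 0$ yields \cref{eq:generation-bis} and, in particular, $\liminf_{t \to 0^+} \|S_t x_0 - x_0\|_H / t = \|A^0(x_0)\|_H < \infty$.

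The substantive step is the converse: assume $x_0 \in \overline{\dom(A)}$ satisfies $\liminf_{t \to 0^+} \|S_t x_0 - x_0\|_H / t < \infty$ and deduce $x_0 \in \dom(A)$. The idea is to derive an integrated energy inequality against any fixed pair $(u, f) \in A$ (meaning $f \in A(u)$), then exploit maximality. For $y_0 \in \dom(A)$, using $\dot{y} = A^0(y)$ a.e.\ and dissipativity,
\begin{equation*}
\tfrac{1}{2}\tfrac{d}{dt}\|y(t) - u\|^2_H = \langle A^0(y(t)), y(t) - u\rangle_H \leq \langle f, y(t) - u\rangle_H \quad \mbox{a.e.}
\end{equation*}
Integrating over $(0, t)$ and passing to the limit via density of $\dom(A)$ in $\overline{\dom(A)}$ together with the contraction of $\sg{S}$, one obtains, for all $x_0 \in \overline{\dom(A)}$ and every $(u, f) \in A$,
\begin{equation*}
\|S_t x_0 - u\|^2_H - \|x_0 - u\|^2_H \leq 2 \int_0^t \langle f, S_s x_0 - u \rangle_H \, ds, \quad t \geq 0.
\end{equation*}

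Using the polarization identity $\|S_t x_0 - u\|^2_H - \|x_0 - u\|^2_H = \|S_t x_0 - x_0\|^2_H + 2 \langle S_t x_0 - x_0, x_0 - u\rangle_H$, dividing by $t$, and picking a sequence $t_n \downarrow 0$ along which $(S_{t_n} x_0 - x_0)/t_n$ is bounded, Hilbert-space weak compactness yields a subsequential weak limit $v \in H$. The term $\|S_{t_n} x_0 - x_0\|^2_H / t_n \to 0$, and continuity of $s \mapsto S_s x_0$ handles the right-hand integral, so passing to the limit produces $\langle v - f, x_0 - u\rangle_H \leq 0$ for every $(u, f) \in A$. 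Maximality of the dissipative operator $A$ then forces $(x_0, v) \in A$, hence $x_0 \in \dom(A)$ (and weak lower semicontinuity of the norm gives $\|v\|_H \leq \liminf_{t\to 0^+}\|S_t x_0 - x_0\|_H/t$, consistent with $v$ being a candidate for $A^0(x_0)$). The main obstacle is the density extension of the integrated energy inequality to $\overline{\dom(A)}$ (which relies on contraction of $\sg{S}$ and continuity of $s \mapsto S_s x_0$) and the final maximality step; both are standard but require some care in the multivalued setting.
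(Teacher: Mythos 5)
Your proposal is correct, and for the semigroup structure, the forward direction, and \cref{eq:generation-bis} it coincides with the paper: everything there is read off from \cref{theo:abstract-cauchy}, in particular from the right-derivative formula \cref{eq:right-der} evaluated at $t=0$. Where you genuinely diverge is the converse implication in \cref{eq:generation}. The paper disposes of it in one line by citing \cite[Th\'eor\`eme 3.5]{Bre73book}, whereas you reprove the needed portion of that theorem from scratch: you integrate the dissipativity inequality $\langle \dot y - f, y - u\rangle_H \leq 0$ against an arbitrary pair $(u,f)\in A$, extend the resulting integral inequality from $\dom(A)$ to $C=\overline{\dom(A)}$ by density and contraction, expand $\|S_tx_0-u\|_H^2-\|x_0-u\|_H^2$, divide by $t$, extract a weak subsequential limit $v$ of the difference quotients along a sequence where they are bounded (which kills the quadratic term since $\|S_{t_n}x_0-x_0\|_H^2/t_n\leq t_nM^2$), and conclude $\langle v-f, x_0-u\rangle_H\leq 0$ for all $(u,f)\in A$, whence $(x_0,v)\in A$ by maximality. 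That last step uses maximality in the graph-inclusion sense, while the paper's \cref{def:max-dis} is the range ($m$-dissipativity) condition; the gap is closed by the paper's own Remark~A.4 (equivalence of the two notions in Hilbert space, \cite[Proposition 2.2]{Bre73book}), so it is worth saying this explicitly rather than leaving "maximality" implicit. The trade-off is clear: the paper's route is shorter and defers to the literature, yours is self-contained and even yields the extra information $\|v\|_H\leq \liminf_{t\to0^+}\|S_tx_0-x_0\|_H/t$, consistent with $v$ being (a candidate for) $A^0(x_0)$.
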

\begin{proof}
Existence of $\sg{S}$ readily follows from \cref{theo:abstract-cauchy}. If $x_0 \in \dom(A)$ then the limit inferior in \cref{eq:generation} is finite by \cref{eq:right-der}. Conversely, if $x_0 \in C$ and the limit inferior in \cref{eq:generation} is finite, then $x_0 \in \dom(A)$ by \cite[Théorème 3.5]{Bre73book} and \cref{eq:generation-bis} follows again from \cref{eq:right-der}.
\end{proof}
In fact, $A$ is uniquely defined by $\sg{S}$; see \cite[Théorème 4.1]{Bre73book} or \cite[Theorem 1.2, Chapter IV]{Bar76book}, which show that contraction semigroups on closed convex subset of Hilbert are always generated by maximal dissipative operators.

The next (and last) lemma characterizes equilibria of \cref{eq:abstract-DE}.
\begin{lemma}[Equilibria]
\label{lem:equilibria}
Let $A$ be maximal dissipative and let $\sg{S}$ be the associated contraction semigroup on $C \triangleq \overline{\dom(A)}$. Given $x^\star \in C$, the following are equivalent:
\begin{enumerate}[label=(\roman*)]
 \item\label{it:fixed} $S_tx^\star=x^\star$ for all $t\geq0$;
 \item \label{it:zero}
 $0\in A(x^\star)$.
\end{enumerate}
\end{lemma}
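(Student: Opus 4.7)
The plan is to prove the equivalence by using \Cref{theo:abstract-cauchy} for the easy direction and \Cref{coro:generation} for the harder direction.

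\textbf{Direction \ref{it:zero} $\Rightarrow$ \ref{it:fixed}.} Assume $0 \in A(x^\star)$. In particular $x^\star \in \dom(A)$. The constant function $t \mapsto x^\star$ is absolutely continuous with $\dot{x} = 0 \in A(x^\star)$ a.e., so it qualifies as a strong solution to \cref{eq:abstract-DE} with initial data $x^\star$. By the uniqueness part of \Cref{theo:abstract-cauchy}, the strong solution coincides with $t \mapsto S_t x^\star$, giving $S_t x^\star = x^\star$ for all $t \geq 0$.

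\textbf{Direction \ref{it:fixed} $\Rightarrow$ \ref{it:zero}.} Assume $S_t x^\star = x^\star$ for all $t \geq 0$. Then $\|S_t x^\star - x^\star\|_H / t = 0$ for all $t > 0$, so
\begin{equation}
\liminf_{t \to 0^+} \frac{\|S_t x^\star - x^\star\|_H}{t} = 0 < + \infty.
\end{equation}
By \Cref{coro:generation}, this forces $x^\star \in \dom(A)$, and furthermore
\begin{equation}
A^0 x^\star = \lim_{t \to 0^+} \frac{S_t x^\star - x^\star}{t} = 0.
\end{equation}
Since $A^0 x^\star$ is the element of minimal norm in $A(x^\star)$ (by \Cref{def:principal}), we conclude $0 \in A(x^\star)$.

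The proof is essentially a direct application of the cited corollary, so there is no real obstacle; the only subtlety is recognizing that the constant trivially vanishing difference quotient triggers the characterization \cref{eq:generation} of the domain, and that the limit in \cref{eq:generation-bis} then identifies $0$ as the principal section value, which by definition of $A^0$ lies in $A(x^\star)$.
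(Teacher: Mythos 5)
Your proof is correct and follows essentially the same route as the paper: the direction $0 \in A(x^\star) \Rightarrow$ fixed point uses the constant strong solution together with uniqueness from \cref{theo:abstract-cauchy}, and the converse uses the vanishing difference quotient with the characterization in \cref{coro:generation}. Your version merely spells out the intermediate steps (finiteness of the liminf in \cref{eq:generation}, then identification of $A^0x^\star = 0$ via \cref{eq:generation-bis}) that the paper's proof compresses into one line.
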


\begin{proof}
Let $x^\star \in C$ such that $S_t x^\star = x^\star$ for all $t \geq 0$. Then, in particular, $(S_t x^\star - x^\star)/t \to 0$ in $H$ as $t \to 0$, $t > 0$, i.e., $0 \in A(x^\star)$ by \cref{coro:generation}. Conversely, suppose $0 \in A(x^\star)$ and let $x(t) \triangleq x^\star$ for all $t\geq 0$. Then, $x$ takes values in $\dom(A)$, is absolutely continuous in $X$, and satisfies $(\d / \d t)x(t) = 0 \in A(x^\star) = A(x(t))$ for all $t > 0$. By uniqueness of strong solutions to \cref{eq:abstract-DE}, $x^\star = S_t x^\star$ for all $t \geq 0$.
\end{proof}

\bibliographystyle{ieeetr}
\bibliography{ProjectedIntegralControlContraction.bib}

\begin{thebibliography}{10}

\bibitem{Davison76}
E.~J. Davison, ``Multivariable tuning regulators: the feedforward and robust
  control of a general servomechanism problem,'' {\em IEEE Trans. Autom.
  Control}, vol.~21, no.~1, pp.~35--47, 1976.

\bibitem{FraWon76}
B.~A. Francis and W.~M. Wonham, ``The internal model principle of control
  theory,'' {\em Automatica}, vol.~12, no.~5, pp.~457--465, 1976.

\bibitem{Isi97}
A.~Isidori, ``A remark on the problem of semiglobal nonlinear output
  regulation,'' {\em IEEE Trans. on Aut. Control}, vol.~42, pp.~1734--1738,
  1997.

\bibitem{IsiByr90}
A.~Isidori and C.~I. Byrnes, ``Output regulation of nonlinear systems,'' {\em
  IEEE Trans. Autom. Control}, vol.~35, pp.~131--140, 1990.

\bibitem{Mor85}
M.~Morari, ``Robust stability of systems with integral control,'' {\em IEEE
  Trans. Autom. Control}, vol.~30, no.~6, pp.~574--577, 1985.

\bibitem{DesLin85}
C.~Desoer and C.-A. Lin, ``Tracking and disturbance rejection of {MIMO}
  nonlinear systems with {PI} controller,'' {\em IEEE Trans. Autom. Control},
  vol.~30, no.~9, pp.~861--867, 1985.

\bibitem{FliLog03}
T.~Fliegner, H.~Logemann, and E.~P. Ryan, ``Low-gain integral control of
  continuous-time linear systems subject to input and output nonlinearities,''
  {\em Automatica}, vol.~39, no.~3, pp.~455--462, 2003.

\bibitem{Sim20}
J.~W. Simpson-Porco, ``Analysis and synthesis of low-gain integral controllers
  for nonlinear systems,'' {\em IEEE Trans. Autom. Control}, vol.~66, no.~9,
  pp.~4148--4159, 2020.

\bibitem{MazPra96}
F.~Mazenc and L.~Praly, ``Adding integrations, saturated controls, and
  stabilization for feedforward systems,'' {\em IEEE Trans. Autom. Control},
  vol.~41, no.~11, pp.~1559--1578, 1996.

\bibitem{AstPra16}
D.~Astolfi and L.~Praly, ``Integral action in output feedback for multi-input
  multi-output nonlinear systems,'' {\em IEEE Trans. Autom. Control}, vol.~62,
  no.~4, pp.~1559--1574, 2016.

\bibitem{GiaAst21}
M.~Giaccagli, D.~Astolfi, V.~Andrieu, and L.~Marconi, ``Sufficient conditions
  for global integral action via incremental forwarding for input-affine
  nonlinear systems,'' {\em IEEE Trans. Autom. Control}, vol.~67, no.~12,
  pp.~6537--6551, 2021.

\bibitem{BalMar23}
I.~Balogoun, S.~Marx, and D.~Astolfi, ``{ISS} {Lyapunov} strictification via
  observer design and integral action control for a {Korteweg--de Vries}
  equation,'' {\em SIAM J. Control Optim.}, vol.~61, no.~2, pp.~872--903, 2023.

\bibitem{VanBri23}
N.~Vanspranghe and L.~Brivadis, ``Output regulation of infinite-dimensional
  nonlinear systems: a forwarding approach for contraction semigroups,'' {\em
  SIAM J. Control Optim.}, vol.~61, pp.~2571--2594, 2023.

\bibitem{VanBri23conf}
N.~Vanspranghe, L.~Brivadis, and L.~Paunonen, ``On forwarding techniques for
  stabilization and set-point output regulation of semilinear
  infinite-dimensional systems,'' in {\em 2023 62nd IEEE Conference on Decision
  and Control (CDC)}, pp.~8149--8155, IEEE, 2023.

\bibitem{Wil72}
J.~C. Willems, ``Dissipative dynamical systems part {I: General theory},'' {\em
  Arch. Ration. Mech. Anal.}, vol.~45, pp.~321--351, 1972.

\bibitem{VdS00}
A.~Van~der Schaft, {\em $L^{2}$-Gain and Passivity Techniques in Nonlinear
  control}.
\newblock Springer, 2000.

\bibitem{OrtVdS01}
R.~Ortega, A.~J. Van Der~Schaft, I.~Mareels, and B.~Maschke, ``Putting energy
  back in control,'' {\em IEEE Cont. Sys. Mag.}, vol.~21, pp.~18--33, 2001.

\bibitem{OrtLor98}
R.~Ortega, A.~Loria, P.~J. Nicklasson, H.~Sira-Ramirez, R.~Ortega, A.~Loria,
  P.~J. Nicklasson, and H.~Sira-Ramirez, {\em Euler-Lagrange systems}.
\newblock Springer, 1998.

\bibitem{BroLoz20}
B.~Brogliato, R.~Lozano, B.~Maschke, and O.~Egeland, {\em Dissipative systems
  analysis and control}.
\newblock Springer, 2020.

\bibitem{SepJan12}
R.~Sepulchre, M.~Jankovi\'c, and P.~V. Kokotovi\'c, {\em Constructive nonlinear
  control}.
\newblock Springer Science \& Business Media, 2012.

\bibitem{JayOrt07}
B.~Jayawardhana, R.~Ortega, E.~Garcia-Canseco, and F.~Castanos, ``Passivity of
  nonlinear incremental systems: Application to {PI} stabilization of nonlinear
  {RLC} circuits,'' {\em Syst. Control Lett.}, vol.~56, pp.~618--622, 2007.

\bibitem{JayWei09}
B.~Jayawardhana and G.~Weiss, ``State convergence of passive nonlinear systems
  with an ${L}^{2}$ input,'' {\em IEEE Trans. Autom. Control}, vol.~54, no.~7,
  pp.~1723--1727, 2009.

\bibitem{OrtvdS18}
M.~Zhang, P.~Borja, R.~Ortega, Z.~Liu, and H.~Su, ``{PID} passivity-based
  control of port-{Hamiltonian} systems,'' {\em IEEE Trans. Autom. Control},
  vol.~63, no.~4, pp.~1032--1044, 2018.

\bibitem{OrtRom21}
R.~Ortega, J.~G. Romero, P.~Borja, and A.~Donaire, {\em {PID} Passivity-Based
  Control of Nonlinear Systems with Applications}.
\newblock J. Wiley \& Sons, 2021.

\bibitem{SinWei22}
S.~Singh, G.~Weiss, and M.~Tucsnak, ``A class of incrementally
  scattering-passive nonlinear systems,'' {\em Automatica}, vol.~142,
  p.~110369, 2022.

\bibitem{SinWei25}
S.~Singh and G.~Weiss, ``The local representation of incrementally scattering
  passive nonlinear systems,'' {\em Math. Control Signals Syst.}, vol.~37,
  no.~1, pp.~81--112, 2025.

\bibitem{Sta02}
O.~J. Staffans, ``Passive and conservative continuous-time impedance and
  scattering systems. {Part I: Well-posed systems},'' {\em Math. Control
  Signals Syst.}, vol.~15, pp.~291--315, 2002.

\bibitem{TucWei14}
M.~Tucsnak and G.~Weiss, ``Well-posed systems—the {LTI} case and beyond,''
  {\em Automatica}, vol.~50, no.~7, pp.~1757--1779, 2014.

\bibitem{ZhaWei17}
X.~Zhao and G.~Weiss, ``Stability properties of coupled impedance passive {LTI}
  systems,'' {\em IEEE Trans. Autom. Control}, vol.~62, no.~11, pp.~5769--5779,
  2017.

\bibitem{RebWei03}
R.~Rebarber and G.~Weiss, ``Internal model based tracking and disturbance
  rejection for stable well-posed systems,'' {\em Automatica}, vol.~39, no.~9,
  pp.~1555--1569, 2003.

\bibitem{CurWei06}
R.~F. Curtain and G.~Weiss, ``Exponential stabilization of well-posed systems
  by colocated feedback,'' {\em SIAM J. Control Optim.}, vol.~45, no.~1,
  pp.~273--297, 2006.

\bibitem{Pau19}
L.~Paunonen, ``Stability and robust regulation of passive linear systems,''
  {\em SIAM J. Control Optim.}, vol.~57, no.~6, pp.~3827--3856, 2019.

\bibitem{Bre73book}
H.~Brezis, {\em Op\'erateurs maximaux monotones et semi-groupes de contractions
  dans les espaces de {Hilbert}}.
\newblock Elsevier, 1973.

\bibitem{Bar76book}
V.~Barbu, {\em Nonlinear Semigroups and Differential Equations in {Banach}
  Spaces}.
\newblock Springer, 1976.

\bibitem{BroDan06}
B.~Brogliato, A.~Daniilidis, C.~Lemarechal, and V.~Acary, ``On the equivalence
  between complementarity systems, projected systems and differential
  inclusions,'' {\em Syst. Control Lett.}, vol.~55, pp.~45--51, 2006.

\bibitem{BroTan20}
B.~Brogliato and A.~Tanwani, ``Dynamical systems coupled with monotone
  set-valued operators: Formalisms, applications, well-posedness, and
  stability,'' {\em SIAM Rev.}, vol.~62, no.~1, pp.~3--129, 2020.

\bibitem{DelCor24}
G.~Delimpaltadakis, J.~Cortés, and W.~P. M.~H. Heemels, ``Continuous
  approximations of projected dynamical systems via control barrier
  functions,'' {\em IEEE Trans. Autom. Control}, vol.~70, no.~1, pp.~681--688,
  2025.

\bibitem{Nag12}
A.~Nagurney and D.~Zhang, {\em Projected Dynamical Systems and Variational
  Inequalities with Applications}, vol.~2.
\newblock Springer Sc. \& Bus. Med., 2012.

\bibitem{SinFue24}
S.~Singh, S.~Fueyo, and G.~Weiss, ``Projected incrementally scattering passive
  systems on closed convex sets,'' {\em Syst. Control Lett.}, vol.~197,
  p.~106033, 2025.

\bibitem{LorWei23}
P.~Lorenzetti and G.~Weiss, ``Saturating {PI} control of stable nonlinear
  systems using singular perturbations,'' {\em IEEE Trans. Autom. Control},
  vol.~68, no.~2, pp.~867--882, 2023.

\bibitem{LorWei22}
P.~Lorenzetti and G.~Weiss, ``{PI} control of stable nonlinear plants using
  projected dynamical systems,'' {\em Automatica}, vol.~146, p.~110606, 2022.

\bibitem{LorPau2023}
P.~Lorenzetti, L.~Paunonen, N.~Vanspranghe, and G.~Weiss, ``Saturating integral
  control for infinite-dimensional linear systems,'' in {\em 2023 62nd IEEE
  Conf. on Dec. and Cont. (CDC)}, pp.~1911--1918, IEEE, 2023.

\bibitem{TarTur09survey}
S.~Tarbouriech and M.~Turner, ``Anti-windup design: an overview of some recent
  advances and open problems,'' {\em IET Control Theory Appl.}, vol.~3,
  pp.~1--19, 2009.

\bibitem{Jay05}
B.~Jayawardhana, ``Tracking and disturbance rejection for passive nonlinear
  systems,'' in {\em Proceedings of the 44th IEEE Conference on Decision and
  Control}, pp.~3333--3338, IEEE, 2005.

\bibitem{TanBro18}
A.~Tanwani, B.~Brogliato, and C.~Prieur, ``Well-posedness and output regulation
  for implicit time-varying evolution variational inequalities,'' {\em SIAM J.
  Control Optim.}, vol.~56, no.~2, pp.~751--781, 2018.

\bibitem{Ama95book}
H.~Amann, {\em Linear and Quasilinear Parabolic Problems}.
\newblock Springer, 1995.

\bibitem{RocWet09}
R.~T. Rockafellar and R.~J.-B. Wets, {\em Variational Analysis}, vol.~317.
\newblock Springer Science \& Business Media, 2009.

\bibitem{IsiMar03}
A.~Isidori, L.~Marconi, and A.~Serrani, {\em Fundamentals of
  Internal-Model-Based Control Theory}, pp.~1--58.
\newblock London: Springer London, 2003.

\bibitem{Roc70b}
R.~T. Rockafellar, ``On the maximal monotonicity of subdifferential mappings,''
  {\em Pacific J. Math.}, vol.~33, pp.~209--216, 1970.

\bibitem{HauDor20}
A.~Hauswirt, F.~Dörfler, and A.~Teel, ``On the robust implementation of
  projected dynamical systems with anti-windup controllers,'' in {\em 2020
  American Control Conference (ACC)}, pp.~1286--1291, 2020.

\bibitem{AstMar22}
D.~Astolfi, S.~Marx, V.~Andrieu, and C.~Prieur, ``Global exponential set-point
  regulation for linear operator semigroups with input saturation,'' in {\em
  2022 IEEE 61st Conf. on Dec. and Cont. (CDC)}, pp.~7358--7363, 2022.

\bibitem{Roc70}
R.~T. Rockafellar, ``On the maximality of sums of nonlinear monotone
  operators,'' {\em Trans. of the American Math. Soc.}, vol.~149, pp.~75--88,
  1970.

\bibitem{Kha02}
H.~Khalil, {\em Nonlinear Systems, 3rd ed.}
\newblock Pearson Ed., Prentice Hall, 2002.

\bibitem{ByrIsi91}
C.~I. Byrnes, A.~Isidori, and J.~C. Willems, ``Passivity, feedback equivalence,
  and the global stabilization of minimum phase nonlinear systems,'' {\em IEEE
  Trans. Autom. Control}, vol.~36, pp.~1228--1240, 1991.

\bibitem{DafSle73}
C.~M. Dafermos and M.~Slemrod, ``Asymptotic behavior of nonlinear contraction
  semigroups,'' {\em J. Funct. Anal.}, vol.~13, pp.~97--106, 1973.

\bibitem{Har91book}
A.~Haraux, {\em Syst\`emes dynamiques dissipatifs et applications}, vol.~17.
\newblock Masson, 1991.

\bibitem{CraPaz69}
M.~G. Crandall and A.~Pazy, ``Semi-groups of nonlinear contractions and
  dissipative sets,'' {\em J. Funct. Anal.}, vol.~3, no.~3, pp.~376--418, 1969.

\bibitem{AcaBon10}
V.~Acary, O.~Bonnefon, and B.~Brogliato, {\em Nonsmooth Modeling and Simulation
  for Switched Circuits}.
\newblock Springer Sc. \& Bus. Med., 2010.

\bibitem{CamHee02}
M.~K. Camlibel, W.~Heemels, and J.~Schumacher, ``On linear passive
  complementarity systems,'' {\em Eur. J. Control}, vol.~8, no.~3,
  pp.~220--237, 2002.

\bibitem{CamSch16}
M.~K. Camlibel and J.~M. Schumacher, ``Linear passive systems and maximal
  monotone mappings,'' {\em Math. Program.}, vol.~157, pp.~397--420, 2016.

\bibitem{HeeSch00}
W.~Heemels, J.~M. Schumacher, and S.~Weiland, ``Projected dynamical systems in
  a complementarity formalism,'' {\em Oper. Res. Lett.}, vol.~27, no.~2,
  pp.~83--91, 2000.

\bibitem{TucWei09book}
M.~Tucsnak and G.~Weiss, {\em Observation and Control for Operator Semigroups}.
\newblock Birkh\"{a}user Advanced Texts, Birkh\"{a}user, 2009.

\bibitem{Sta05book}
O.~Staffans, {\em Well-Posed Linear Systems}.
\newblock Cambridge univ. press, 2005.

\bibitem{WeiTuc03}
G.~Weiss and M.~Tucsnak, ``How to get a conservative well-posed linear system
  out of thin air. {Part} {I.} {Well-posedness} and energy balance,'' {\em
  ESAIM: COCV}, vol.~9, pp.~247--273, 2003.

\bibitem{CurWei19}
R.~F. Curtain and G.~Weiss, ``Strong stabilization of (almost) impedance
  passive systems by static output feedback,'' {\em Math. Control Relat.
  Fields}, vol.~9, no.~4, pp.~643--671, 2019.

\bibitem{Sho13}
R.~E. Showalter, {\em Monotone operators in Banach space and nonlinear partial
  differential equations}, vol.~49.
\newblock American Math. Soc., 2013.

\bibitem{Bre11book}
H.~Brezis, {\em Functional Analysis, {S}obolev Spaces and Partial Differential
  Equations}.
\newblock Universitext, Springer, 2011.

\bibitem{Daf78}
C.~M. Dafermos, ``Asymptotic behavior of solutions of evolution equations,'' in
  {\em Nonlinear Evolution Equations}, pp.~103--123, Elsevier, 1978.

\end{thebibliography}

\vfill
\end{document}